\documentclass[11pt,a4paper]{amsart}
%\usepackage[textwidth=360pt,textheight=615pt]{geometry} 

% This first part of the file is called the PREAMBLE. It includes
% customizations and command definitions. The preamble is everything
% between \documentclass and \begin{document}.
\usepackage{tikz-cd}
\usepackage[margin=1in]{geometry}  % set the margins to 1in on all sides
\usepackage{graphicx}              % to include figures
\usepackage{amsmath,euscript,comment}               % great math stuff
\usepackage{amsfonts}              % for blackboard bold, etc
\usepackage{amssymb,amscd,epsf,amsthm, epsfig}  

\usepackage{enumerate}              % better theorem environments
\usepackage[toc,page]{appendix}
\usepackage{amsmath,calligra,mathrsfs}
\usepackage{float}
\usepackage[symbol]{footmisc}
\usepackage{tikz-cd}
\usepackage{xcolor}
\usepackage[frame,cmtip,arrow,matrix,line,graph]{xy}
\usepackage{lpic}
\usepackage{stmaryrd}
\usepackage{scrextend}
\usepackage{enumitem}
\newcommand{\subscript}[2]{$#1 _ #2$}

\usepackage{tikz-cd}
\usepackage[margin=1in]{geometry}  % set the margins to 1in on all sides
\usepackage{graphicx}              % to include figures
\usepackage{amsmath,euscript,comment}               % great math stuff
\usepackage{amsfonts}              % for blackboard bold, etc
\usepackage{amssymb,amscd,epsf,amsthm, epsfig}

\newtheorem{theorem}{Theorem}[section]
\newtheorem{lemma}[theorem]{Lemma}
\newtheorem{proposition}[theorem]{Proposition}
\newtheorem{corollary}[theorem]{Corollary}

\newtheorem{lemma-definition}[theorem]{Lemma-Definition}

\newtheorem{Th}{Theorem}

\theoremstyle{definition}
\newtheorem{definition}[theorem]{Definition}
\newtheorem{example}[theorem]{Example}

\newtheorem{remark}[theorem]{Remark}

\numberwithin{equation}{section}

%\setlength{\parindent}{0pt}
%\setlength{\parskip}{2ex}
%\setlength{\topmargin}{0in}
%\addtolength{\hoffset}{-2.5cm}
%\addtolength{\textwidth}{5cm}

\numberwithin{theorem}{section}

\newcommand{\Z}{\mathbb{Z}}
\newcommand{\R}{\mathbb{R}}

\newcommand{\calD}{\mathcal{D}}

\newcommand{\calR}{\mathcal{R}}

\DeclareMathOperator{\HOM}{\mathscr{H}\text{\kern -3pt {\calligra\large om}}\,}

\newcommand{\DD}{\EuScript D}

\newcommand{\sA}{\mathop{s\overline{A}}\nolimits}

\newcommand{\Hom}{\mathop{\mathrm{Hom}}\nolimits}

\newcommand{\Map}{\mathop{\mathrm{Map}}\nolimits}
\newcommand{\id}{\mathop{\mathrm{id}}\nolimits}

\newcommand{\op}{\mathop{\mathrm{op}}\nolimits}

\newcommand{\sg}{\mathop{\mathrm{sg}}\nolimits}

\newcommand{\HH}{\mathop{\mathrm{HH}}\nolimits}

\newcommand{\Perf}{\mathop{\mathrm{Perf}}\nolimits}

%%%%%%Nathalie

\newcommand{\F}{\mathcal{F}}
\newcommand{\Le}{\mathcal{L}}
\newcommand{\NN}{\mathbb{N}}

\newcommand{\Si}{\Sigma}
\newcommand{\De}{\Delta}
\newcommand{\eps}{\epsilon}
\newcommand{\la}{\lambda}
\newcommand{\ga}{\gamma}
\newcommand{\be}{\beta}

\newcommand{\del}{\partial}

\newcommand{\inc}{\hookrightarrow}

\newcommand{\lar}{\longleftarrow}
\newcommand{\rar}{\longrightarrow}
\newcommand{\arsim}{\xrightarrow{\sim}}
\newcommand{\minus}{\backslash}
\newcommand{\x}{\times}
\newcommand{\ot}{\otimes}

\newcommand{\lgl}{\langle}
\newcommand{\rgl}{\rangle}

\newcommand{\un}{\underline}

\newcommand{\GH}{\mathrm{GH}}
\newcommand{\CS}{\mathrm{CS}}
\DeclareMathOperator\concat{concat}
\DeclareMathOperator\cut{cut}

%%%Florian
\newcommand{\Figeight}{{\mathrm{Fig}(8)}}

\newcommand{\RR}{\mathcal{R}}
\newcommand{\EE}{\mathcal{E}}

\newcommand{\ev}{\operatorname{ev}}

\newcommand{\intp}{\operatorname{int}}

\newcommand{\Apl}{\mathcal{A}_\text{pl}}
\newcommand{\cone}{\operatorname{cone}}

\newcommand{\U}{\mathcal{U}}

\input xy
\xyoption{all}

\setcounter{tocdepth}{2}% 3 to get subsubsections in toc
\let\oldtocsection=\tocsection
\let\oldtocsubsection=\tocsubsection
\let\oldtocsubsubsection=\tocsubsubsection
\renewcommand{\tocsection}[2]{\hspace{0em}\oldtocsection{#1}{#2}}
\renewcommand{\tocsubsection}[2]{\hspace{1em}\oldtocsubsection{#1}{#2}}
\renewcommand{\tocsubsubsection}[2]{\hspace{2em}\oldtocsubsubsection{#1}{#2}}

\usepackage{lipsum}

\usepackage{hyperref}

\definecolor{darkgreen}{RGB}{47,139,79}
\definecolor{darkblue}{RGB}{36,24,130}
\hypersetup{
    colorlinks=true,     linktoc=all,         linktocpage,
    citecolor=darkgreen,
    filecolor=black,
    linkcolor=darkblue,
    urlcolor=black
}

\begin{document}

\title{String topology in three flavours}
\author{Florian Naef}
\author{Manuel Rivera}
\author{Nathalie Wahl}

\maketitle
{\centering\footnotesize \emph{Dedicated to Dennis Sullivan on the occasion of his 80th birthday}.\par}

\begin{abstract}
We describe two major string topology operations, the
Chas-Sullivan product and the Goresky-Hingston coproduct, from
geometric and algebraic perspectives. The geometric construction uses Thom-Pontrjagin intersection theory while the algebraic
construction is phrased in terms of Hochschild homology. We give computations of products and coproducts on lens
spaces via geometric intersection, and deduce that the coproduct
distinguishes 3-dimensional lens spaces. 
Algebraically, we describe the 
structure these operations define together on the Tate-Hochschild complex.
We use rational homotopy theory methods to sketch the equivalence between the geometric and algebraic definitions for simply connected manifolds and real coefficients,
emphasizing the role of configuration spaces.
Finally, we study invariance properties of the operations, both algebraically and geometrically. 
\end{abstract}

\section{Introduction}
\addtocontents{toc}{\protect\setcounter{tocdepth}{1}}

%\Nnote{encouragements from Dennis:  It would be great if all of the elements in this discussion could be combined , illuminated and exposed in a useful and enlightened discussion between all of you.... In the past one has been very glad when a contradiction or conceptual tension arises ,  this has been an opportunity to  learn something .... and in this case it is the question that has fascinated me since grad school\\
% what is the algebraic chain level meaning of a space being a combinatorial or smooth manifold.}

String topology is concerned with algebraic structures defined by intersecting, concatenating, and cutting families of paths and loops
in a manifold $M$. It began with Chas and Sullivan's construction of an intersection type product on $H_*(LM)$, the homology of the space $LM=
\text{Map}(S^1,M)$ of all loops in $M$, also known as the free loop space of $M$ \cite{CS99}. The loop product induces a Lie bracket on $H_*^{S^1}(LM)$, the $S^1$-equivariant homology of $LM$, generalizing an earlier construction of Goldman for curves on surfaces \cite{Gol86}.

Over the last twenty years, string topology has branched out to many corners of mathematics:
\begin{itemize}
\item It has an algebraic counterpart in Hochschild homology through the Jones \cite{Jon87} and Goodwillie \cite{Goo85} isomorphisms
$$ H^*(LM;\mathbb F)\cong HH_*(C^*(M;\mathbb F),C^*(M;\mathbb F))\  \textrm{and}\   H_*(LM)\cong HH_*(C_*(\Omega M),C_*(\Omega M))$$
for $\mathbb F$ a field, $\Omega M$ the based loop space of $M$ and where $M$ is assumed to be simply connected for the first isomorphism, see e.g.~\cite{CohJon,Mer,Felix-Thomas,NaeWil19,Malm};
\item It has a symplectic interpretation through the Viterbo \cite{Vit95} isomorphism (with appropriate coefficients) $$H_*(LM)\cong FH_*(T^*M)$$ with target the Floer homology of the cotangent bundle of $M$, see e.g.~\cite{SaWe06, AbbSch06, AbbSch10, CieLat09}; 
\item Rich families of string operations have been defined, in particular using the algebraic model of string topology, with for instance BV structures, Lie bialgebras, 2-dimensional field theories of various flavours, and more, see e.g.~\cite{godin07, Dru-ColPoirRou,TraZei06, Kau08, Kau07, WahWes08}; 
\item String topology has been used to study closed geodesics on Riemanian manifolds through Morse theory on the energy functional, see  e.g.~\cite{GorHin,HinRad}; 
\item String operations can be defined instead on the loop space $LBG$ for $G$ a Lie group, or more generally on the loop space of stacks, see \cite{ChaMen,HepLah,KGNX} and see e.g.~\cite{Lah,GroLah} for applications to group homology.  
\end{itemize}
We will not be able to cover all aspects of string topology in this note and will instead focus on a few highlights that, we hope, illustrate the richness of the subject. In particular, we will restrict our attention to the original loop product of Chas and Sullivan and its ``dual'', the Goresky-Hingston coproduct. We will describe these two operations geometrically as well as algebraically, and use methods from rational homotopy theory to compare the two descriptions, where the role of configuration spaces will be emphasized.  The geometric aspect of string topology will be illustrated through computations of loop products and coproducts via intersections of geometric cycles in examples from lens spaces.  Algebraically, we will see that the two operations together define a single product on the Tate-Hochschild complex, defined below, and are encoded by the data of a Manin triple. Finally, we will address the question of invariance of the product and coproduct. 

We describe now in more detail the content of this text. Throughout, $M$ will be a closed oriented manifold of dimension $n$, and homology is with $\Z$-coefficients unless otherwise stated. 

\begin{figure}
  \includegraphics[width=0.6\textwidth]{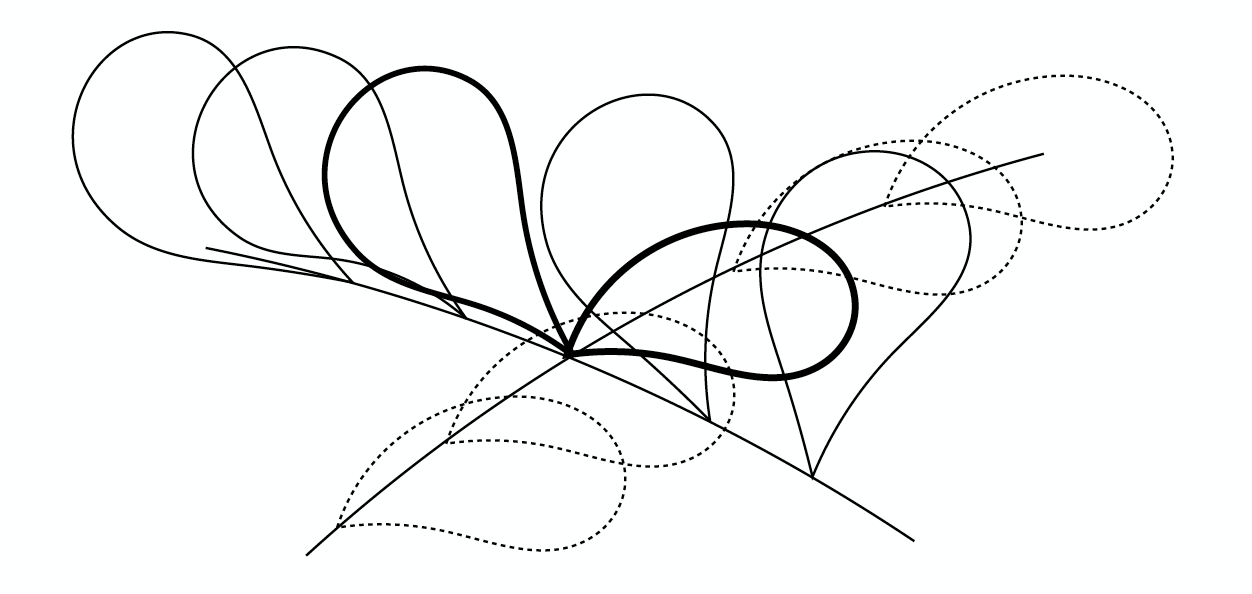}
 % \caption{The Chas-Sullivan product}\label{fig:product}
\end{figure}
%\vspace{-6cm}

\subsection*{Intersection products}
Recall that the classical intersection product $$\bullet: H_p(M) \otimes H_q(M) \to H_{p+q-n}(M)$$ 
 can be computed by geometric intersection for transverse cycles: if $A,B\in H_*(M)$ are homology classes represented by smooth transversally embedded submanifolds, then their product $A\bullet B$ is given by the geometric intersection $A\cap B$ of the cycles.
The original idea behind the Chas-Sullivan product is to define a product on $H_*(LM)$ by likewise transversally intersecting two families of loops in $M$ at their basepoints, which is an intersection in $M$, and concatenating loops at the locus of intersection. This results in a graded commutative and associative product
$$\wedge: H_p(LM) \otimes H_q(LM) \to H_{p+q-n}(LM),$$  that is, 
by construction, compatible with 
%This product, known as the \textit{loop product}, lifts
the intersection product under the evaluation map $\ev_0:LM\to M$. We will refer to the Chas-Sullivan product as the {\em loop product}.

Following ideas going back to Cohen-Jones \cite{CohJon}, we give in Section~\ref{sec:defproco} a formal definition of this product by lifting the definition of the classical intersection product phrased in terms of a Thom-Pontrjagin construction for the diagonal embedding $\De:M\to M\x M$.

The Goresky-Hingston coproduct  \cite{GorHin}, also considered by Sullivan \cite{Sul04} and refered to as the {\em loop coproduct} here,  has the form 
$$\vee: H_p(LM,M) \to H_{p-n+1}(LM \times LM, LM \times M \cup M \times LM).$$  The idea of  the coproduct is, given a family of loops, to look for all the self-intersections in the family of the form $\ga(0)=\ga(t)$, for $\ga$  a loop and $t\in I$ is a time coordinate, and then cut. %the loop at such self-intersections.
Following Hingston-Wahl \cite{HinWah0}, we show that it 
can be defined using a simple variant of the definition of the loop product. The operation is most naturally a relative operation because the interval $I$ has non-trivial boundary; see Remark~\ref{rem:lifts} for non-relative versions of the coproduct. %We will refer to the Goresky-Hingston coproduct as the {\em loop coproduct}. 

The loop product and coproduct can be diagramatically described as
$$\xymatrix{LM\x LM \ar[d]_{\ev_0\x \ev_0} \ar@/^2ex/@{-->}[r] & \ar[l] \Figeight \ar[d]^{\ev_0} \ar[r]^-{concat} & LM & &  LM\x I \ar[d]_{\ev_0\x \ev_t} \ar@/^2ex/@{-->}[r] & \ar[l] \F \ar[d]^{\ev_0} \ar[r]^-{cut} & LM \x LM \\
  M\x M & \ar[l]_-\De M &  &  & M\x M & \ar[l]_-\De M & }$$
where the middle spaces $\Figeight\cong LM\x_M LM$  and $\F\subset LM\x I$ are the subspaces where the desired intersection holds, and where the dashed arrows are ``intersection products'' that are only defined on homology (or on chains). In Sections~\ref{sec:41} and \ref{sec:42}, we will formulate the data used from $M$ to define these intersection products in terms of an {\em intersection context} (see Definition~\ref{def:intcont}). Our preferred intersection context associated to a manifold $M$ will be
$$\begin{tikzcd}
UTM \ar[r] \ar[d] & FM_2 \ar[d] \\
 M \ar[r] & M \times M,
\end{tikzcd}$$
where $FM_2$ is the configuration space of two points in $M$ and $UTM$ the unit tangent bundle of $M$. 

\subsection*{Geometric computations}

Just like the intersection product $\bullet$ can be computed by geometric intersection for nice enough cycles, the loop product and coproduct can be computed by a direct intersection for cycles that are appropriately transverse. This is made precise in Proposition~\ref{prop:compute}, following \cite{HinWah0},  and illustrated through the computation of the loop product and coproduct of a family of classes generating $H_3(L\Le_{p,q})$, for $\Le_{p,q}$ a 3-dimensional lens spaces; see Propositions~\ref{prop:prodrho} and~\ref{prop:vrho}. As an application of the computation, we
prove the following
\begin{Th}[Theorem \ref{thm:lens}]\label{thmA}
The loop coproduct distinguishes non-homeomorphic 3-dimensional lens spaces.  
% Let $\Le_{p,q_1},\Le_{p,q_2}$ be 3-dimensional lens spaces.  A homotopy equivalence
%  $f:\Le_{p,q_1}\to \Le_{p,q_2}$ is a degree 1 homeomorphism if and only if the induced map $Lf_*:H_*(L\Le_{p,q_1},\Le_{p,q_1})\to H_*(L\Le_{p,q_2},\Le_{p,q_2})$ preserves the loop coproduct of degree 3 classes. 
  \end{Th}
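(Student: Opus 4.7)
The plan is to combine the explicit geometric computation of the loop coproduct on 3-dimensional lens spaces provided by Proposition~\ref{prop:vrho} with the classical classification of lens spaces. Since $\pi_1(\Le_{p,q}) \cong \Z/p$ is abelian, the free loop space splits as $L\Le_{p,q} = \bigsqcup_{k\in \Z/p} L_k\Le_{p,q}$, and Proposition~\ref{prop:prodrho} produces degree-$3$ generators $\rho_k \in H_3(L_k \Le_{p,q})$, one per component. Recall that $\Le_{p,q}$ and $\Le_{p,q'}$ are homeomorphic if and only if $q' \equiv \pm q^{\pm 1} \pmod p$, whereas they are homotopy equivalent under the strictly weaker relation $qq' \equiv \pm k^2 \pmod p$. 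Thus any homotopy-invariant tool (the cohomology ring, Steenrod operations, linking form) cannot distinguish all non-homeomorphic lens spaces, and the proof must exhibit that $\vee$ is sensitive to $q \pmod p$ itself, not merely to its square class.

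First I would apply Proposition~\ref{prop:vrho} to write
$$\vee(\rho_k) \;=\; \sum_{\substack{i,j \in \Z/p \\ i+j \equiv k}} c_{i,j}(p,q)\, \rho_i \otimes \rho_j$$
with explicit integer coefficients $c_{i,j}(p,q)$ obtained by a transverse count of self-intersections $\gamma(0) = \gamma(t)$ in an appropriate family of loops representing $\rho_k$. Lifting to the universal cover $S^3 \to \Le_{p,q}$, a self-intersection downstairs corresponds to a pair of lifts of the basepoint differing by an element of the deck group $\Z/p$, which acts on $S^3$ by the $q$-twisted rotation; in this way the count records the action of $q$ modulo $p$ directly, not merely up to squares.

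Next I would argue that any isomorphism between the coproduct structures on $H_*(L\Le_{p,q})$ and $H_*(L\Le_{p,q'})$ permutes the components via a group automorphism $\phi \in \mathrm{Aut}(\Z/p)$ and sends $\rho_k$ to $\pm \rho_{\phi(k)}$; matching the formulas above then forces $q' \equiv \pm q^{\pm 1} \pmod p$, i.e.\ homeomorphism. The main obstacle will be the arithmetic step: verifying from the closed-form expression for $c_{i,j}(p,q)$ that the family of coefficients determines $q$ modulo the symmetries $q \mapsto -q$ (orientation reversal) and $q \mapsto q^{-1}$ (change of generator of $\pi_1$), and in particular that they distinguish the classical homotopy-equivalent-but-not-homeomorphic examples such as $(p,q) = (7,1)$ and $(7,2)$. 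Establishing this separation of $\vee$ from the linking form is precisely what allows the coproduct to detect the finer homeomorphism classification.
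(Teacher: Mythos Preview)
Your proposal has a concrete error and a genuine gap.

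\textbf{The degree is wrong.} The loop coproduct on a $3$--manifold has degree $1-n=-2$, so $\vee:H_3(L\Le_{p,q},\Le_{p,q})\to H_1(L\Le_{p,q}\times L\Le_{p,q},\ldots)$. The output is expressed in terms of the $1$--dimensional classes $\beta_{k,k'}$ of Lemmas~\ref{lem:beta'} and~\ref{lem:beta}, not as a sum $\sum c_{i,j}\,\rho_i\otimes\rho_j$ of degree--$3$ classes. Proposition~\ref{prop:vrho} gives exactly this: $\vee\rho_{\ell,m}$ is a $\Z$--linear combination of $\beta$--classes, with coefficients determined by $q'$ and a pair of index ranges.

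\textbf{The image of $\rho$ under $f$ is not what you claim.} You assert that a coproduct-preserving isomorphism sends $\rho_k\mapsto\pm\rho_{\phi(k)}$. But $H_3(L_\ell\Le_{p,q})\cong\Z\oplus\Z/p$ is generated by $\rho_{\ell,0}$ and $\rho_{\ell,1}$, so a homotopy equivalence $f$ sends $\rho_{1,0}$ to a combination $(1-a)\bar\rho_{\ell,0}+a\bar\rho_{\ell,1}$ for some $a\in\{0,\dots,p-1\}$, with $\ell$ constrained by $q_1\equiv\ell^2 q_2\pmod p$. The paper's proof has to carry this unknown parameter $a$ through the computation and eliminate it.

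\textbf{The arithmetic step you defer is the entire content.} The paper computes $\vee\big((1-a)\bar\rho_{\ell,0}+a\bar\rho_{\ell,1}\big)-f(\vee\rho_{1,0})$ explicitly as a sum over three index sets
\[
A=\{t:0<t<\ell\},\quad B=\{tq_2':0<t<d\},\quad C=\{t\ell' q_2':0<t<q_1\},
\]
(where $q_2\ell=cp+d$) and then does a case analysis on how $A,B,C$ overlap to force $q_1q_2\equiv\pm1$ or $q_1\equiv\pm q_2\pmod p$. Your sketch neither sets up this comparison nor indicates how the arithmetic would go; the heuristic about lifts to $S^3$ does not substitute for it.
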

% show \Nnote{maybe :)} that the coproduct distinguises non-homeomorphic 3-dimensional lens spaces, see \Nnote{XX}.
  This result is an extension of a computation of the first author in \cite{Nae21}, used in that paper to show  that the loop coproduct is not homotopy invariant; see below for more details about the invariance properties of the loop product and coproduct.

\subsection*{String topology algebraically }
Assume now that $M$ is a simply connected closed manifold. The isomorphism  $HH_*(C^*(M;\mathbb F),C^*(M;\mathbb F))\cong H^*(LM;\mathbb F)$ mentioned above, actually holds independently of the fact that $M$ is a manifold. However, the algebraic structure of the Hochschild complex becomes much richer once one inputs that $H^*(M)$ statisfies Poincar\'e duality, or in other words that it is a {\em Frobenius algebra} (see Definition~\ref{def:symfrob}). In the above isomorphism, we can replace $C^*(M;\mathbb F)$ by any algebra $A$ quasi-isomorphic to it in the category of dg algebras. By a theorem of Lambrechts-Stanley, it is possible to find a model $A$ for the rational cochains $C^*(M;\mathbb Q)$ that has the structure of a (strict) commutative dg Frobenius algebra compatible with the Frobenius structure on $H^*(M;\mathbb Q)$ (see Theorem~\ref{lastthm} and Example~\ref{ex:frobmodel}).
The relevant consequence for us is that: 
\begin{center}
{\em 
  The algebraic structure of the Hochschild chains or cochains of dg Frobenius algebras \\ reflects rational string topology.}
\end{center}
% note here that the isomorphism $A\cong A^\vee$ between $A$ and its dual induce
For Frobenius algebras, we indeed  have an isomorphism between the linear dual of the Hochschild chain complex $C_*(A,A)$ and the Hochschild cochain complex $C^*(A,A)$, so both complexes are relevant (see Remark~\ref{rem:HHdual}). 

There is a wealth of literature on the algebraic structure of the Hochschild chains and cochains of Frobenius algebras, including algebraic versions of the product and coproduct just described, see e.g.~\cite{CohJon, Mer, Abb16, FTV} for the loop product and \cite{Abb16,Kla13B} for the loop coproduct, or e.g.~\cite{TraZei06, Kau08, Kau07, Kau21,WahWes08} for larger structures encompassing both,  or \cite{Kla13B,Wah16} for a prop of universal operations on the Hochschild complex of symmetric or commutative Frobenius algebras. (See also \cite{BerKau} in the present volume.) 

It turns out that the loop product identifies with the classical cup product on Hochschild cochains \cite{Felix-Thomas}, while the loop coproduct becomes the following product on {\em relative} Hochschild chains (see Definition~\ref{def:relHH}):
%\Nnote{rewritten a little}
%\Mnote{I tried rewriting the following theorem in an appropriate form for the introduction (the previous formulation was a bit confusing) but I am not sure if I succeeded}
\begin{Th}\cite{NaeWil19}\label{thmB}
  Let $A$ be a dg Frobenius algebra model for $C^*(M; \mathbb{R})$. Under a relative version of the Jones isomorphism $H^*(LM;\mathbb R)\cong HH_*(C^*(M;\mathbb R),C^*(M;\mathbb R)) \cong HH_*(A,A)$, the linear dual of the loop coproduct is given on cochains by the formula
  %given by the formula  
  $$( \overline{a_1} \otimes \dotsb \otimes \overline{a_p} \otimes a_{p+1})* (\overline{b_1} \otimes \dotsb \otimes \overline{b_q} \otimes b_{q+1}) =\sum_i \pm \overline{b_1}\otimes \cdots \otimes \overline{b_{q+1}e_i} \otimes \overline{a_1}\otimes \cdots \otimes \overline{a_p} \otimes a_{p+1}f_i,$$
  where $\Delta(1)=\sum_ie_i\otimes f_i\in A\otimes A$ represents the Thom class of the diagonal in $M \times M$. %determined by the Frobenius structure of $A$
  (See Example~\ref{ex:PDint} and %Definitions~ \ref{def:symfrob} and
  Definition \ref{defn:algebraic coproduct}).
% $\alpha = \overline{a_1} \otimes \dotsb \otimes \overline{a_p} \otimes a_{p+1}$ and $\beta = \overline{b_1} \otimes \dotsb \otimes \overline{b_q} \otimes b_{q+1}$  Hochschild chains, 
%where $\eta_i = |\alpha| |f_i| +|b_{q+1}| + (|\alpha|+n-1) (|\beta| +n-1)$. 
\end{Th}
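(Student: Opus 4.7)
The plan is to establish the formula by transporting the geometric definition of the loop coproduct through a relative Jones-type isomorphism to the Hochschild side, and then identify the Thom class of the diagonal with the Frobenius element $\sum_i e_i \otimes f_i \in A \otimes A$. First I would set up a relative version of the Jones isomorphism, identifying the reduced cochains $C^*(LM, M; \mathbb{R})$ (where $M \hookrightarrow LM$ as constant loops) with the relative Hochschild chain complex $C_*(A,\bar A)$ consisting of tensors $\overline{a_1}\otimes \cdots \otimes \overline{a_p}\otimes a_{p+1}$ where all but the last tensor factor lie in the augmentation ideal $\bar A$. This fits into the classical story because of the fibration $\Omega M \to LM \xrightarrow{\ev_0} M$ and Chen's iterated integral model, which naturally produces the normalized (reduced) bar construction in the left $p$ slots while remembering a free $A$-module slot corresponding to the value at the basepoint.

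Next I would model the geometric coproduct using the intersection context $UTM \to FM_2$, $M \to M \times M$ displayed in the introduction. The coproduct factors as (1) restrict a family of loops $LM \times I$ to the subspace $\mathcal F$ where $\gamma(0) = \gamma(t)$, then (2) cut to get a pair of loops. Step (1) is a Thom-Pontrjagin umkehr along the diagonal $\Delta: M \to M \times M$ applied fibrewise over the parameter $t \in I$, and step (2) is the cut map. Through the iterated-integral model, a loop presented by a word $\overline{a_1}\otimes\cdots\otimes\overline{a_p}\otimes a_{p+1}$ corresponds, after choosing $t$, to splitting the word into an initial segment and a terminal segment, with the middle "pinch" evaluated against the pullback of the diagonal Thom class.

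Now comes the algebraic translation of the umkehr. Since $A$ is a strict commutative dg Frobenius model for $C^*(M;\mathbb{R})$, Poincaré duality is encoded by the coproduct $\Delta: A \to A \otimes A$ with $\Delta(1) = \sum_i e_i \otimes f_i$ representing exactly the Thom class of the diagonal (Example~\ref{ex:PDint}). Dualizing the geometric intersection along $\Delta$ therefore becomes, on chains, insertion of $\sum_i e_i \otimes f_i$ at the splitting point. Combined with the cut operation — which takes an unbased word and produces an ordered pair of words by separating at positions $0$ and $t$ — this precisely matches the formula: the $\overline{b}$-block is placed first (because cutting a loop at two points produces the second arc starting at time $t$), its last factor absorbs $e_i$, and the $\overline{a}$-block follows with $f_i$ multiplied into its basepoint slot $a_{p+1}$.

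The technical heart of the argument, and the main obstacle, is the rigorous passage between the geometric Thom-Pontrjagin umkehr and its algebraic counterpart: one must verify that Chen's iterated-integral quasi-isomorphism sends the geometrically defined coproduct to the algebraic one at the chain level, not merely up to some indeterminate homotopy. This requires (a) using a commutative Frobenius model to ensure a strict identification of the Thom class with $\Delta(1)$ in $A\otimes A$, for which one invokes the Lambrechts–Stanley theorem (Theorem~\ref{lastthm}, Example~\ref{ex:frobmodel}); (b) keeping careful track of signs coming from the Koszul rule, the degree shift by $n-1$ inherent to the coproduct, and the reversal of the $\bar a$ and $\bar b$ blocks; and (c) justifying the relative form of Jones' theorem so that the basepoint slots $a_{p+1}$, $b_{q+1}$ carry the evaluation at $\ev_0$ while the barred slots encode the path. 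Once these compatibilities are pinned down, the formula follows by direct comparison of the two descriptions of "cut at a self-intersection and insert the diagonal".
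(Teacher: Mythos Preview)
Your overall strategy matches the paper's: transport the geometric coproduct through a Jones-type isomorphism, decompose it into an intersection step and a cut step, and identify the Thom class of the diagonal with the Frobenius coproduct $\Delta(1)$. The paper (following \cite{NaeWil19}) uses the Eilenberg--Moore theorem rather than Chen's iterated integrals to build the relevant models, but it explicitly notes that the Chen approach works as well, so that difference is inessential.

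There is, however, a genuine gap. You correctly invoke the Lambrechts--Stanley theorem to obtain a commutative dg Frobenius model $A$ for $M$, and you correctly note that $\Delta(1)\in A\otimes A$ represents the diagonal class in $H^n(M\times M)$. But the coproduct is a \emph{relative} operation: the Thom class you need lives in $H^n(M\times M, M\times M\setminus M)\cong H^n(M,UTM)$, and the relevant intersection map is the relative intersection product built from the intersection context
\[
\begin{tikzcd}
UTM \ar[r]\ar[d] & FM_2 \ar[d] \\ M \ar[r] & M\times M.
\end{tikzcd}
\]
To carry out the ``algebraic translation of the umkehr'' you must therefore have explicit algebraic models not only for $M$ and $M\times M$ but for the entire square above, compatibly with the Frobenius model $A$. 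This is the step where the paper invokes the Campos--Willwacher/Idrissi theorem (Theorem~\ref{thm:LSpotpourri}): the Lambrechts--Stanley algebra $\F_A$ is a real model for $FM_2$, the algebra $\U_A$ models $UTM$, and the whole square \eqref{diag:FM_2} is modeled by \eqref{diag:Poincare duality FM_2}. Only then can one write down the Thom class as $(e,\vartheta)\in\cone(A\to\U_A)$ and compute explicitly the inverse of the excision map, which is what produces the insertion of $\Delta(1)$ at the cut point. Your proposal mentions the intersection context $UTM\to FM_2$ but never says how to model it; without that input the argument cannot be completed, and this is precisely the place where the simply-connectedness hypothesis and real coefficients are used.

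A secondary point: the paper works with the alternative description of the coproduct from Proposition~\ref{prop:goodandbad} (the relative version of the ``trivial'' coproduct $\vee_{\frac12}$), decomposed as $J\circ\intp_M\circ\cut$, and models each of these three maps separately (Propositions~\ref{prop:J}, \ref{prop:cut}, and the intersection discussion). Your description conflates this with the original parametrized definition over $t\in I$; the reparametrization map $J$ is exactly what absorbs the interval parameter and produces the shuffle of the two words that you describe heuristically.
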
 
  % Section~\ref{sec:algop}.
 This result is stated as Theorem~\ref{thm:alggeo} in the present paper, and we give a sketch proof of the result in Section~\ref{sec:real}. 

\smallskip
 
%With this in mind, in Section~\ref{sec:HH}, we will consider the Hochschild chains and cochains of dg Frobenius algebras.
In Section~\ref{sec:Tate2}, we will focus on the following aspect of the algebraic structure defined by the algebraic product and coproduct:
\begin{Th}\cite{RivWan19}\label{thmC}
  The algebraic product and coproduct extend to define toghether a single $A_\infty$-structure on the Tate-Hochschild complex
  $$\calD^{*,*}(A,A) = \cdots \xrightarrow{\partial_h} s^{1-k}C_{-1,*}(A,A) \xrightarrow{\partial_h} s^{1-k}C_{0,*}(A,A) \xrightarrow{\gamma} C^{0,*}(A,A) \xrightarrow{\delta_h} C^{1,*}(A,A) \xrightarrow{\delta_h} \cdots
  $$
  that is compatible with the natural pairing between Hochschild chains and cochains and with an extension of Connes' operator $B$ to the Tate-Hochschild complex. On cohomology, the product is graded commutative, and $H^*(\calD^*(A,A))$ identifies, as an algebra, with the endomorphism algebra of $A$ in the {\em singularity category} of $A$-$A$-bimodules (see Remark~\ref{rem:Tatealgtop}).
\end{Th}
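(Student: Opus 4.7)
The plan is to realize the Tate-Hochschild complex as a ``two-sided'' bar-cobar style complex, construct the product directly on chain level using the Frobenius copairing $\Delta(1)=\sum_i e_i\otimes f_i$, and then verify the $A_\infty$-relations together with the compatibility statements. First, I would set notation: using the reduced bar construction, write $C^{*,*}(A,A)=\Hom(BA,A)$ and $C_{*,*}(A,A)=A\otimes BA$, and use the Frobenius form $\epsilon:A\to k$ to build the connecting map $\gamma$, which sends $a\otimes b_1\otimes\cdots\otimes b_p\in C_{0,*}(A,A)$ to the cochain $c_1\otimes\cdots\otimes c_p\mapsto \epsilon(a\cdot c_1\cdots c_p)$ (up to sign and shift). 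Checking that $\gamma$ is a chain map and that the displayed sequence assembles into a single complex $\calD^{*,*}(A,A)$ is essentially formal, but it is the step that demands care with the shift $s^{1-k}$ coming from Poincar\'e duality.

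Next, I would define the product on $\calD^{*,*}(A,A)$ piecewise. On the cochain half, take the cup product, which corresponds under the Jones isomorphism to the loop product (as in Cohen-Jones / F\'elix-Thomas). On the chain half, take the operation $*$ from Theorem \ref{thmB}; note that, even though it was introduced as a coproduct on the loop space side, formula \ref{thmB} writes it as an honest associative product of two chains using the Frobenius copairing. The essentially new content is the definition of the \emph{mixed} operation: for one chain input $a\otimes \bar b_1\otimes\cdots\otimes\bar b_p$ and one cochain input $\phi$, insert $\phi$ into the barred part using $\sum_i e_i\otimes f_i$ in a way that reduces to cap product at the level of cohomology. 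These three operations, glued together, form a degree-zero product $m_2$ on $\calD^{*,*}(A,A)$.

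The heart of the argument is then to upgrade $m_2$ to an $A_\infty$-structure. Strict associativity holds only up to explicit homotopy because the bar model of the cup product is not strictly compatible with the algebraic coproduct across the connecting map $\gamma$; the Frobenius copairing is invariant but does not commute with differentials in a pointwise sense. Thus I would construct $m_3,m_4,\ldots$ by a homological perturbation / obstruction-theoretic argument, inductively producing operations that trivialize the failure of the previous $A_\infty$-relation. Concretely, the sum $\sum_i e_i\otimes f_i$ being closed (corresponding to $\Delta(1)$ being a cycle) provides the chain homotopies; one can also view this construction operadically, realizing $\calD^{*,*}(A,A)$ as an algebra over a Frobenius-type prop as in \cite{Wah16,WahWes08}. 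I expect the \emph{main obstacle} to be controlling the combinatorics and signs of these higher operations, especially when mixed inputs interact with $\gamma$ — this is the step where one really uses that $A$ is strict commutative Frobenius (Lambrechts-Stanley) and not merely $A_\infty$-Frobenius.

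Finally, I would deduce the three compatibility/identification claims. Compatibility with the chain-cochain pairing follows from the construction of the mixed operation, since $\gamma$ itself is essentially the pairing with the unit. Compatibility with the extension of Connes' $B$ uses that $B$ acts on both halves of the complex and commutes with $\gamma$ up to explicit homotopy, again via the Frobenius copairing; graded commutativity on $H^*$ is then standard once the $A_\infty$-structure is in place (apply the Eckmann-Hilton-style argument coming from the $E_2$-enhancement of the cup product). For the singularity-category statement, I would use that over a Frobenius algebra the singularity category coincides with the stable bimodule category, identify $\End^*_{D_{sg}(A\otimes A^{\op})}(A)$ with morphisms modulo those factoring through projectives, and show by direct comparison that this quotient is computed by the Tate-Hochschild complex $\calD^{*,*}(A,A)$, matching the product constructed above.
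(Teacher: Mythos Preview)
This theorem is a cited result from \cite{RivWan19}; the present paper is a survey and, as the introduction explicitly says, the statement is \emph{stated} as Theorem~\ref{thm1} in Section~\ref{sec:Tate2} rather than proved here. The relevant section (input from \texttt{Hochschildsec-manuel}) gives the definitions of the Tate-Hochschild complex, the algebraic product and coproduct, and the Manin-triple reformulation, and then records Theorem~\ref{thm1} with a reference to \cite{RivWan19} for the argument. So there is no ``paper's own proof'' to match your proposal against; what one can compare is your sketch with the actual proof in \cite{RivWan19}.

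On that comparison, your broad architecture---define $m_2$ piecewise as cup on cochains, $*$ on chains, and a mixed cap-type operation, then control the failure of associativity---is in the right spirit, but several specifics are off. First, your description of the connecting map $\gamma$ is not correct: $\gamma$ is not the pairing $a\otimes \overline{b_1}\otimes\cdots\otimes\overline{b_p}\mapsto \big(c_1\otimes\cdots\otimes c_p\mapsto \epsilon(a\,c_1\cdots c_p)\big)$. As the introduction here says, $\gamma$ is built from the Frobenius structure and ``can be thought of as an Euler characteristic''; concretely it involves multiplication by the diagonal class $\Delta(1)=\sum_i e_i\otimes f_i$, which is why the splitting of the Tate complex (Remark~\ref{rem:lifts}(4)) occurs exactly when $\chi(M)=0$. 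Getting $\gamma$ wrong propagates: the mixed operation and the homotopies witnessing associativity across $\gamma$ all use $\Delta(1)$, not the bare pairing. Second, in \cite{RivWan19} the higher $A_\infty$ operations are written down explicitly (closed formulas in the bar generators and the copairing), not produced by an abstract homological-perturbation or obstruction argument; the explicit formulas are what make the compatibility with $B$ and with the chain--cochain pairing checkable. Third, graded commutativity on cohomology is not deduced from an $E_2$-enhancement (none is available on the Tate complex in that paper); it is proved directly via explicit homotopies, and the Manin-triple description mentioned in Remark~\ref{rem:Manin} is the conceptual packaging. Your final paragraph on the singularity category is closer to the actual argument: one identifies $H^*(\calD^*(A,A))$ with $\Hom$ in the stabilization of the derived category of $A$-bimodules, and the product with composition; this is indeed the route taken in \cite{RivWan19}, and it is also what underlies the invariance statement (Theorem~\ref{thmD}).
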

%following Rivera-Wang  \cite{RivWan19}, 
%we will explain in Section~\ref{sec:Tate2} how these two operations define together a single product on the Tate-Hochschild complex (see Theorem~\ref{thm1}), which is an unbounded cochain complex obtained by
Here the Tate-Hochschild complex ``glues together'' the Hochschild chains and cochains along the map $\gamma$ that can be thought of as an Euler characteristic, constructed using the Frobenius structure of $A$, 
%or, more precisely, using the ``Euler characteristic" at degree $0$:
%$$\calD^{*,*}(A,A) = \cdots \xrightarrow{\partial_h} s^{1-k}C_{-1,*}(A,A) \xrightarrow{\partial_h} s^{1-k}C_{0,*}(A,A) \xrightarrow{\gamma} C^{0,*}(A,A) \xrightarrow{\delta_h} C^{1,*}(A,A) \xrightarrow{\delta_h} \cdots
%$$
see Section~\ref{sec:Tate} for a complete definition of this complex. In  Remark~\ref{rem:Manin}, we give a description of this structure in terms of Manin triples, and this implies a form of infinitesimal bialgebra compatibility between the Goresky-Hingston coproduct and the Chas-Sullivan loop product. 
Note that Cieliebak-Hingston-Oancea have given a geometric version of the above Tate construction, including its algebra structure, using Rabinowitz-Floer homology, a theory that combines symplectic homology and cohomology via a ``V-shaped" Hamiltonian \cite{CiFrOa10, CieOan20, CieHinOan1, CieHinOan2}. Theorem~\ref{thmC} is stated as Theorem~\ref{thm1} in the text. 

The Tate-Hochschild complex satisfies the following strong invariance property, that is a consequence of the interpretation in terms of the singularity category:
\begin{Th}\cite{RivWan19}\label{thmD}
 If two simply connected symmetric dg Frobenius algebras are quasi-isomorphic as dg associative algebras, then their Tate-Hochschild cohomologies are isomorphic as algebras.
  \end{Th}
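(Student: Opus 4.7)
The plan is to reduce the statement to the intrinsic interpretation of the Tate--Hochschild cohomology given in Theorem~\ref{thmC}. For a simply connected symmetric dg Frobenius algebra $A$, that theorem identifies $H^*(\calD^*(A,A))$, as an algebra, with the endomorphism algebra of $A$ in the singularity category $D_{\sg}(A^e)$ of $A$-bimodules, where $A^e = A \otimes A^{\op}$. The crucial observation is that, while the chain-level construction of $\calD^*(A,A)$ (and in particular the connecting map $\gamma$) depends on the Frobenius structure, the right-hand side $\End_{D_{\sg}(A^e)}(A)$ depends only on $A$ as a dg associative algebra.

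Given a quasi-isomorphism $f : A \to B$ of dg associative algebras, I would invoke the standard derived Morita principle: $f$ induces a triangulated equivalence $D(A^e) \simeq D(B^e)$ under which the bimodule $A$ corresponds to the bimodule $B$. Since this equivalence preserves perfect complexes, it descends to a triangulated equivalence of singularity categories $D_{\sg}(A^e) \simeq D_{\sg}(B^e)$ sending $A$ to $B$, and hence induces an algebra isomorphism
$$\End_{D_{\sg}(A^e)}(A) \ \cong\ \End_{D_{\sg}(B^e)}(B).$$
Combining with Theorem~\ref{thmC} applied to both $A$ and $B$ yields the desired algebra isomorphism $H^*(\calD^*(A,A)) \cong H^*(\calD^*(B,B))$.

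The main obstacle is ensuring that the algebra isomorphism provided by Theorem~\ref{thmC} is genuinely canonical, so that composition in $D_{\sg}(A^e)$ really matches the product on the Tate--Hochschild complex from both sides consistently; one must verify this at the level of the derived enhancement to be able to transport it along the Morita equivalence. The simple connectivity assumption is used exactly here, in that it is what lets Theorem~\ref{thmC} apply and provides the needed finiteness properties of the Tate complex and its singularity-category avatar. Once this dg derived-categorical framework is in place, the invariance result follows immediately from the fact that the singularity category, together with the distinguished bimodule $A$, depends only on the quasi-isomorphism type of $A$ as a dg associative algebra, independently of any Frobenius enrichment.
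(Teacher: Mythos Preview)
Your argument is essentially the one the paper intends: the text explicitly says that Theorem~\ref{thmD} ``is a consequence of the interpretation in terms of the singularity category,'' i.e.\ it is deduced from the identification $H^*(\calD^*(A,A))\cong \End_{D_{\sg}(A^e)}(A)$ of Theorem~\ref{thmC} together with the quasi-isomorphism invariance of the pair $(D_{\sg}(A^e),A)$, exactly as you outline. One small point: you phrase the hypothesis as a single quasi-isomorphism $f\colon A\to B$, whereas ``quasi-isomorphic as dg associative algebras'' in general only gives a zig-zag; since your derived-Morita step applies to each leg of a zig-zag, this is harmless but should be said.
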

This result is stated as Theorem~\ref{thm2} in the text. A direct consequence of the result is that the algebraic version of the loop coproduct is a homotopy invariant in the simply connected setting (see Corollary~\ref{corollary1}).

\subsection*{Naturality and invariance}
One of the original motivations of Chas and Sullivan in studying free loop spaces was to understand what characterizes the algebraic topology of manifolds and to construct algebraic invariants that could detect beyond the homotopy type; in Sullivan's own words to us
\begin{center}
``{\em ...it is the question that has fascinated me since grad school:
  what is the algebraic chain level meaning of a space being a combinatorial or smooth manifold?}''
\end{center}
%(see Acknowledgments section below).
The particular instance of this question we will adress here is the following: a homotopy equivalence  $M\xrightarrow{\simeq} N$ induces an isomorphism  $H_*(LM)\xrightarrow{\cong} H_*(LN)$, and likewise on relative homology, and one can ask whether this induced map respects the loop product or coproduct. We summarize in the following result what is known about the question: 
\begin{Th}\label{thmE}
  \begin{enumerate}
  \item \cite{CKS} The Chas-Sullivan product on $H_*(LM)$ is invariant under homotopy equivalences of manifolds $M\arsim N$.
  \item (\cite{RivWan19} and \cite{NaeWil19}) The Goresky-Hingston coproduct on $H_*(LM;\R)$ is invariant under homotopy equivalences of simply connected manifolds $M\arsim N$.
    \item \cite{Nae21} The Goresky-Hingston coproduct on $H_*(LM)$ is not homotopy invariant in general. 
    \end{enumerate}
  \end{Th}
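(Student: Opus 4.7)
The plan is to address the three parts in sequence, leveraging the algebraic and geometric machinery developed earlier in the paper.

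For part (1), I would follow the Cohen-Klein-Sullivan strategy of reformulating the loop product entirely in terms of homotopy-theoretic data. The Chas-Sullivan product is built from the Thom-Pontrjagin (Umkehr) map associated to the diagonal embedding $\Delta: M \to M \times M$ together with concatenation of loops. Although the tangent bundle $TM$ is not a homotopy invariant, Atiyah duality identifies the Thom spectrum $M^{TM}$ with a shift of the Spanier-Whitehead dual of $M_+$, and the underlying Spivak normal fibration is a homotopy invariant. The strategy is to reformulate the Umkehr map $\Delta^!$ so that the entire construction of the loop product lives in the stable homotopy category in a way functorial for maps of $M$, so that a homotopy equivalence $f: M \arsim N$ induces a morphism of the spectrum-level diagrams defining the two loop products, intertwining them up to homotopy.

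For part (2), I would combine Theorems~\ref{thmB},~\ref{thmC}, and~\ref{thmD}. A homotopy equivalence $M \arsim N$ of simply connected closed manifolds induces a quasi-isomorphism of dg associative algebras between $C^*(M;\R)$ and $C^*(N;\R)$, which by the Lambrechts-Stanley construction yields a quasi-isomorphism of dg associative algebras between symmetric dg Frobenius models $A_M$ and $A_N$ (though not necessarily one preserving the Frobenius structure). Theorem~\ref{thmD} then delivers an isomorphism of graded algebras between the Tate-Hochschild cohomologies of $A_M$ and $A_N$. By Theorem~\ref{thmC} this algebra structure encodes the algebraic loop coproduct, and by Theorem~\ref{thmB} the algebraic coproduct agrees with the geometric Goresky-Hingston coproduct on $H_*(LM;\R)$ under the relative Jones isomorphism. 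Chasing the identifications, the geometric coproduct is preserved by $f$.

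For part (3), I would invoke Theorem~\ref{thmA} directly. There exist pairs of 3-dimensional lens spaces that are homotopy equivalent but not homeomorphic — the classical example being $\Le_{7,1}$ and $\Le_{7,2}$, where Whitehead's classification gives the homotopy equivalence while Reidemeister torsion obstructs homeomorphism. By Theorem~\ref{thmA}, any such pair must have distinct loop coproducts, so the integral loop coproduct cannot be a homotopy invariant.

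The principal obstacle is part (1): one must carefully reformulate the loop product at a level that sees only homotopy-invariant data (namely the Spivak normal fibration rather than $TM$) while still faithfully encoding the intersection-theoretic Umkehr, which is delicate because the tangential structure naturally enters the Thom-Pontrjagin construction. Parts (2) and (3) are, by contrast, essentially applications of the theorems established earlier in the text, so the real content beyond citing these is concentrated in part (1).
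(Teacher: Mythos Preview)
Your treatment of parts~(2) and~(3) matches the paper's: part~(2) is obtained by combining Theorems~\ref{thmB} and~\ref{thmD} (the paper does not invoke Theorem~\ref{thmC} explicitly, passing instead through Corollary~\ref{corollary1} to extract the coproduct from the Tate--Hochschild algebra structure, but this is a matter of packaging), and part~(3) is indeed a direct application of Theorem~\ref{thmA} to a homotopy-equivalent non-homeomorphic pair of lens spaces.

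For part~(1) your route differs from the paper's. You follow the original Cohen--Klein--Sullivan line via Atiyah duality and the Spivak normal fibration, working at the level of Thom spectra in the stable homotopy category. The paper instead sketches a proof through the intersection-context formalism of Section~\ref{sec:42}: Theorem~\ref{thm:invEE} shows that for any fibration $\EE\to N\times N$ the absolute intersection product $\intp_N$ agrees with the transferred product $f_*\intp_M$, following most closely Gruher--Salvatore's construction of a product-preserving map $\theta_f$ and checking that its composite with the naturality map of Lemma~\ref{lem:fint} is the identity on homology. The paper's approach has the advantage of staying inside the language already developed in Section~\ref{sec:conf} and of making visible exactly where the argument breaks for the \emph{relative} intersection product underlying the coproduct (the composite need not be the identity in relative homology). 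Your stable-homotopy approach is correct and is what \cite{CKS} actually do, but it requires importing machinery (Thom spectra, Spanier--Whitehead duality, the Spivak fibration) that the paper has not set up, and it does not make the contrast with the coproduct's failure of invariance as transparent.
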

  Alternative proofs of part (1) of the theorem were given by \cite{GruSal,Felix-Thomas,Cra}. We give here a sketch proof of this result, in Theorem~\ref{thm:invEE}, stated in terms of homotopy invariance of general {\em intersection products}.
Part (2) of the theorem is a direct consequence of combining Theorems~\ref{thmB} and~\ref{thmD}, while part (3) is a consequence of Theorem~\ref{thmA}. 
  
  The essential difference between the loop product and coproduct is that the loop coproduct uses a {\em relative intersection product}, and the proof of homotopy invariance of intersection product does not extend to proving the relative result. The article \cite{Nae21} suggests that the failure of invariance of the loop coproduct is related to Reidemeister torsion, which is compatible with Theorem~\ref{thmA}. See also \cite{HinWah1} for conditions on the homotopy equivalences under which the coproduct is invariant.

%\Mnote{I reorganized and added some sentences to the following two paragraphs, please read} 
%\Nnote{just changed the last sentence  a little. }
  A  non-invariance result was earlier obtained by Basu for a modified version of the coproduct \cite{Bas11}.
Naef used the lens spaces  $\Le_{1,7}$ and $\Le_{2,7}$ in \cite{Nae21} to show non-homotopy invariance of the coproduct on homology. 
The very same lens spaces where used by Longoni-Salvatore in \cite{LonSal} to show that the configuration space of two points in a manifold is likewise not a homotopy invariant of the manifold. Although we do not directly relate these two computations of non-homotopy invariance, we have already seen above that the configuration space of two points is an important ingredient in the definition of the loop coproduct, being part of the data needed to define the corresponding (relative)  intersection product, see Sections~\ref{sec:41} and \ref{sec:42}.

The Lie bialgebra structure at the level of $S^1$-equivariant homology is a homotopy invariant for simply connected manifolds by \cite{Nae21}. The recent paper \cite{CHV22} proves that homotopy invariance over the reals is also satisfied for  a chain level version of the Lie bialgebra structure (also known as $IBL_{\infty}$-algebra) in the case of 2-connected manifolds.
It is so far unknown whether  the chain level Lie bialgebra structure on $S^1$-equivariant chains (or a chain level version of the coalgebra structure in the non-equivariant case) is a homotopy invariant for simply connected manifolds. 
%To determine exactly where the homotopy invariance breaks down one must answer the following additional question: is the chain level Lie bialgebra structure on $S^1$-equivariant chains (or a chain level version of the coalgebra structure in the non-equivariant case) a homotopy invariant for simply connected manifolds?

%We study in Section~\ref{sec:invariance}  the naturality properties of general ``intersection products'', as defined in Sections~\ref{sec:41} and \ref{sec:42}\Nnote{not ideal as a ref}, and state the main ingredient in the homotopy invariance of the loop product in those terms in Theorem~\ref{thm:invEE}. The main difference between the loop product and coproduct is whether this intersection product is of a relative form or not. This turns out to make a crucial difference. 

 %Naef has observed in \cite{Nae21} that the Goresky-Hingston coalgbebra structure on $H_*(LM,M)$ is not a homotopy invariant in the non-simply connected case: it can distinguish homotopy equivalent Lens spaces $L(1,7)$ and $L(2,7)$. Basu made a similar observation in his thesis where a modified version of the coproduct was studied \cite{Bas11}. Rivera and Wang have proven an algebraic version of the homotopy invariance in the simply connected setting and over a field using the Tate-Hochschild construction applied to a Poincar\'e duality model for the underlying manifold \cite{RivWan21}. 

%\medskip

\subsubsection*{Organisation of the paper} 
In Section~\ref{sec:inter}, after recalling the Thom-Pontrjagin definition of the intersection product, we give a chain level definition of the loop product and coproduct.
Section~\ref{sec:computations} gives the computations of the loop products and coproducts on $H_3(L\Le_{p,q})$ for 3-dimensional lens spaces $\Le_{p,q}$. The coproduct computation is used in Section~\ref{sec:invariance0} to show that the loop coproduct is not homotopy invariant. Then Section~\ref{sec:badcop} gives an alternative definition of the loop coproduct as a relative version of the so-called ``trivial coproduct'', the coproduct on the loop space that only looks for basepoint self-intersections at time $t=\frac{1}{2}$. This definition will be used in Section~\ref{sec:conf} to show the equivalence between the algebraic and geometric descriptions of the coproduct.

Section~\ref{sec:HH} is concerned with the algebraic version of string topology. It starts with recalling and setting in context the concepts of  Frobenius algebras, Hochshild chains and cochains. Section~\ref{sec:Tate} then gives the definition of the Tate-Hochschild complex of a dg Frobenius algebra. The loop product and coproduct are defined algebraically in Section~\ref{sec:algop} as products on the Hochschild cochains and chains respectively. These two products are assembled to a single product on the Tate-Hochschild complex in Section~\ref{sec:Tate2}, where it is also interpreted in the language of Manin triples.  The invariance of the product on the Tate-Hochschild complex is stated at the end of the section. 

Section~\ref{sec:conf} takes a closer look at the ``intersection products'' that appear in the definition of the loop product and coproduct. After revisiting the definitions of the loop product and coproduct in Section~\ref{sec:41}, the notion of {\em intersection context} is defined in Section~\ref{sec:42}, a data one can construct intersection and relative intersection products from. The naturality and invariance properties of such intersection products are discussed in Section~\ref{sec:invariance}. Finally, 
Section~\ref{sec:real} gives a sketch proof of the equivalence between the algebraic and geometric coproduct (Theorem~\ref{thm:alggeo}) using an intersection context featuring the configuration space of two points in $M$ and its real model \cite{Campos-Willwacher,Idrissi}.

\subsection*{Acknowledgements}
The authors would like to thank Dennis Sullivan for being a constant source of mathematical inspiration and for encouraging us to %compare different approaches to string topology: this paper arose from
compare and combine our different approaches to the question of invariance of the loop coproduct, with the goal of illuminating the conceptual tensions in our (at times contradictory!) results. 
%
%A few months ago, Dennis wrote to all of us and ended his message saying:
%\\
%\textit{``It would be great if all of the elements in this discussion could be combined, illuminated, and exposed in a useful and enlightened discussion between all of you.
%In the past, one has been very glad when a contradiction or conceptual tension arises, this has been an opportunity to learn something, and in this case it is the question that has fascinated me since grad school:
%what is the algebraic chain level meaning of a space being a combinatorial or smooth manifold?"}
%\\
%We hope this note serves as a first step in this direction. 
We see this paper as  a first step in this direction. 

FN and NW have received funding from the European Union’s Horizon 2020 research and innovation programme under the Marie Sklodowska-Curie (grant agreement No. 896370) and the European Research
Council  (grant agreement No. 772960) respectively, and were both supported by the Danish National Research Foundation through
 the Copenhagen Centre for Geometry and Topology (DNRF151). 
 MR was supported by NSF Grant 210554 and the Karen EDGE Fellowship. 
 %, and NW by the  the European Research Council (ERC)  under the European Union’s Horizon 2020 research and innovation programme (grant agreement No. 772960). FN and NW also acknowledge the support of the Danish National Research Foundation through the Copenhagen Centre for Geometry and Topology (DNRF151). 

\tableofcontents

\section{String topology via geometric intersection}\label{sec:inter}
\addtocontents{toc}{\protect\setcounter{tocdepth}{2}}

Let $M$ be a closed oriented manifold of dimension $n$, and pick a Riemannian metric on $M$.  
The loop space $LM=\Map(S^1,M)$ is homotopy equivalent to the space $\Lambda M$ of $H^1$--loops on which the energy functional is defined:
$$LM\simeq \Lambda M \xrightarrow{\ E\ } \R, \ \ \ \textrm{where } \ E(\gamma)=\int_{S^1}|\gamma'(t)|^2dt.$$ 
The critical points of the energy are precisely the closed geodesics. Given that the energy is nice enough to do Morse theory, it follows that the homology $H_*(LM)\cong H_*(\Lambda M)$ ``knows'', or even ``is build out of'' closed geodesics. (See e.g., \cite{Oan15} for a  survey of Morse theory on the free loop space.)

As a graded abelian group, $H_*(LM)$ depends only on the homotopy type of $M$, whereas the closed geodesics depend on $M$ as a Riemannian manifold. This naturally leads to the question of whether there is some additional structure on $H_*(LM)$ that depends on a more refined structure than just the homotopy type of $M$.
When $M$ is a closed manifold, its homology satisfies Poincar\'e duality, and this duality takes  the cup product of $H^*(M)$ to the {\em intersection product}:
$$H_p(M)\otimes H_q(M) \xrightarrow{\ \bullet\ } H_{p+q-n}(M).$$
%Chas and Sullivan suggested in \cite{CS99} the following potential answer to the question of using the manifold structure of $M$ to refine $H_*(LM)$: to lift the
%intersection product of $H_*(M)$ to a product
The lifts of the intersection product given by the  Chas-Sullivan product 
$$H_p(LM)\otimes H_q(LM) \xrightarrow{\ \wedge\ } H_{p+q-n}(LM)$$
%obtained from intersecting the chains of basepoints of families of loops, and then concatenating the loops that now share a common basepoint. 
%\Nnote{I'm not super happy with the picture. Any suggestion to improve welcome!} 
%A related operation, often called the Goresky-Hingston coproduct, also considered by Sullivan (see \cite{GorHin,Sul04}), takes instead a single family of loops $\ga$ and looks for self-intersections of the loops of the form $\gamma(0)=\gamma(t)$ for some time $t\in [0,1]$; as we will see, this yelds a map
and Goresky-Hingston coproduct 
$$H_p(LM,M) \xrightarrow{\ \vee\ }  H_{p+1-n}(LM\x LM,M\x LM\cup LM\x M)$$
%in homology relative to the constant loops. 
briefly described in the introduction, give a potential answer to the above question. 
Following ideas of Cohen-Jones \cite{CohJon} as implemented in \cite{HinWah0}, we explain here how both operations can be defined on chains as direct lifts of the intersection product, by using a chain-level definition of the intersection product in terms of a Thom-Pontrjagin construction. Section~\ref{sec:computations} will give example computations, obtained from intersecting geometric cycles, from which we will be able to deduce  in Section~\ref{sec:invariance0} that the coproduct does detect more than the homotopy type. Finally, Section~\ref{sec:badcop} will give an alternative definition of the coproduct.

 Note that homology in this section will always mean homology with integral coefficients: $H_*(\_\,):=H_*(\_\,;\Z)$, and the same for cohomology. 

\subsection{The intersection product as a Thom-Pontrjagin construction}\label{sec:int0}

The normal bundle of the diagonal embedding $\De: M\inc M\x M$ is isomorphic to the tangent bundle $TM$.
Identifying $TM\equiv TM_\eps$ with its subbundle of {\em small vectors}, i.e.~vectors of length at most $\eps\ll \rho$ for $\rho$ the injectivity radius, the map
$$\nu_M: TM \inc M\x M \ \ \textrm{defined by }\ \nu_M(x,V)=(x,x+\exp_xV)$$ 
is an explicit tubular neighborhood for $\De$, with image the  $\eps$--neighborhood of the diagonal
$$\nu_M: TM \xrightarrow{\cong} U_M=\{(x,y)\in M\x M \ |\ |x-y|<\eps\}.$$
Under this identification, the bundle projection map $TM\to M$  becomes the retraction  $r:U_M\to M$ defined by $r(x,y)=x$. 
We let
\begin{equation}\label{equ:tau}
  \tau_M\in C^n(M\x M,M\x M\backslash M)\xleftarrow{\sim} C^n(TM,TM\backslash M)
  \end{equation}
denote the image of a cochain representative for the Thom class for $TM$, where $M\subset M\x M$ is the diagonal, and the arrow is the map $\nu_M^*$, which is a quasi-isomorphism by excision.  

Out of this data, we can give the following chain level description of the intersection product on $H_*(M)$: 
\begin{equation}\label{equ:int}
  \bullet\colon C_p(M)\ot C_q(M)\xrightarrow{\x}  C_{p+q}(M\x M)\xrightarrow{[\tau_M \cap]}  C_{p+q-n}(U_M) \xrightarrow{r}  C_{p+q-n}(M),
\end{equation}
where the middle map is the following composition:
\begin{equation}\label{equ:cap}
[\tau_M \cap]: C_*(M\x M) \to C_*(M\x M,M\x M\backslash M) \xrightarrow{\sim} C_*(U_M,U_M\backslash M) \xrightarrow{\tau_M \cap}  C_{*-n}(U_M),
\end{equation}
with the middle map being a homotopy inverse to excision, as can be obtained, for example, by subdividing simplices. 
(To be precise, this definition differs by a sign from the intersection product defined as the Poincar\'e dual of the cup product, see eg.~\cite[Proposition B.1]{HinWah0}.)

An important property of the intersection product, for computational purposes, is that it can indeed be computed by geometric intersection for homology classes that can be represented by transverse embedded submanifolds: if $A,B\subset M$ are embedded transverse submanifolds of $M$, with $[A]\in H_p(M)$ and $[B]\in H_q(M)$ the corresponding homology classes, then
$$[A]\bullet [B]=[A\cap B]\in H_{p+q-n}(M). $$
See eg.~\cite[VI Theorem 11.9]{Bredon}.

\subsection{Definition of the product and coproduct as lifts of the intersection product}\label{sec:defproco}

Let $ev_0:LM\to M$ denote the evaluation at $0$. 
The Chas-Sullivan product $\wedge$ being a lift of the intersection product $\bullet$ means that both products should fit in a commutative diagram of the form
\begin{equation}\label{equ:CSlift}
\xymatrix{H_p(LM)\ot H_q(LM) \ar[r]^-{\wedge}\ar[d]_{\ev_0\ot \ev_0} & H_{p+q-n}(LM)\ar[d]^{\ev_0}\\
  H_p(M)\ot H_q(M) \ar[r]^-{\bullet} & H_{p+q-n}(M)
}\end{equation}
We explain now how this can be achieved simply by ``pulling back'' all the ingredients of the above definition of the intersection product to the loop space along the evaluation map $\ev_0\x \ev_0$.

Recall from above the $\eps$--neighborhood $U_M$ of the diagonal in $M\x M$ and define $$U_{\CS}=(e\x e)^{-1}U_M=\big\{(\ga,\la)\in LM\x LM \ |\ |\ga(0)-\la(0)|<\eps\big\}.$$
The retraction $r:U_M\to M$ lifts to a retraction 
\begin{equation*}
  R_{\CS}\colon U_{\CS}\longrightarrow \Figeight =\big\{(\ga,\la)\in LM\x LM \ |\ \ga(0)=\la(0)\big\}= (\ev_0\x \ev_0)^{-1}(M\subset M\x M)
\end{equation*}%
by concatenating with a geodesic stick to connect the loops so that they form a ``figure 8'': 
\begin{equation*}
R_{\CS}(\gamma ,\lambda )=(\gamma \ ,\ \overline{\gamma(0)\lambda(0)}\star\lambda \star \overline{\lambda(0)\gamma(0)})
\end{equation*}%
where, for $x,y\in M$ with $|x-y|<\rho $, $\overline{xy}$ denotes the unique
minimal geodesic path $[0,1]\rightarrow M$ from $x$ to $y$, which is possible by our choice of $\eps$,  
and $\star $ is the concatenation of paths.\footnote{See eg., \cite[Sec 1.2]{HinWah0} for a definition of an associative concatenation.}
See also Figure~\ref{fig:sticks}(a).

\begin{figure}[h]
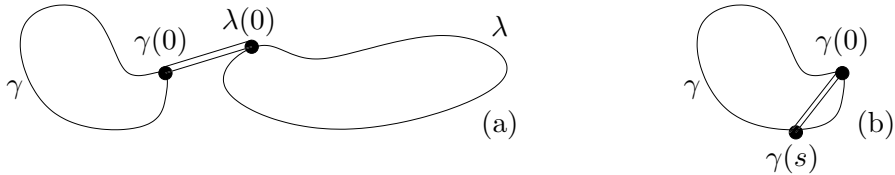

\centering
\begin{lpic}{sticks(0.55,0.55)}
\lbl[b]{-1,10;$\ga$}
\lbl[b]{34,20;$\ga(0)$}
\lbl[b]{55,25;$\lambda(0)$}
\lbl[b]{115,25;$\lambda$}
\lbl[b]{161,10;$\ga$}
\lbl[b]{197,20;$\ga(0)$}
\lbl[t]{185,-1;$\ga(s)$}
\lbl[b]{115,0;(a)}
\lbl[b]{205,0;(b)}
\end{lpic}
\caption{The retraction maps $R_{\CS}$ and $R_{\GH}$.}
\label{fig:sticks}
\end{figure}

Pulling back our representative of the  Thom class $\tau_M$ along the evaluation map gives a cochain 
$$\tau_{\CS}:=(e\x e)^*\tau_M\in C^*(LM\x LM,\Figeight^c).$$ 
Together, $U_{\CS}, R_{\CS}$ and $\tau_{\CS}$ are all the ingredients we need to define the desired product:
\begin{definition}\label{def:GeoPro}
The following sequence of chain maps is a chain model for the Chas-Sullivan product: 
\begin{multline}\label{equ:CS}
 \wedge\colon C_p(LM)\ot C_q(LM) \xrightarrow{\x} C_{p+q}(LM\x LM)\xrightarrow{[\tau_{\CS}\cap]} C_{p+q-n}(U_{\CS}) \\
  \xrightarrow{R_{\CS}} C_{p+q-n}(\Figeight)  \xrightarrow{\concat} C_{p+q-n}(LM),
\end{multline}
where, just as in (\ref{equ:int}), the middle map is the composition of an homotopy inverse to excision followed by the capping map.
\end{definition}
%a little more subtle than may at first appear at it requires a chain homotopy inverse to excision, as may be obtained by going to ``small chains'', that only has good naturality properties in homology. See eg. \cite[Sec A.2]{HinWah0}. 
%more precisely given by the following composition:
%$$\tau_{CS}\cap:  C_{*}(LM\x LM) \rar C_{*}(LM\x LM,U_{CS,\eps_0}^c) \rar C_{*}(U_{CS},U_{CS,\eps_0}^c)  \xrightarrow{\tau_{CS}\cap} C_{*-n}(U_{CS})$$ 

Naturality of the maps gives that the resulting homology product on the homology $H_*(LM)$ makes
Diagram~(\ref{equ:CSlift}) commute. And it is shown in \cite[Proposition 2.4]{HinWah0} that this simple minded chain description of the Chas-Sullivan product agrees in homology with the definition of Cohen-Jones \cite{CohJon} given in terms of a tubular neighborhood of the figure 8 space $\Figeight$ inside $LM\x LM$. 
 
\medskip

The coproduct can be defined completely analogously, replacing the evaluation map $\ev_0\x \ev_0: LM\x LM \to M\x M$ by  the evaluation map
$$e_I: LM\x I\to M\x M \ \ \ \textrm{defined by}\ \ e_I(\ga,s)=(\ga(0),\ga(s)).$$
Indeed, setting
 $$U_{\GH}=e_I^{-1}U_M=\big\{(\ga,s)\in LM\x I \ |\ |\ga(0)-\ga(s)|<\eps\big\},$$
we again have a retraction map
\begin{equation*}
  R_{\GH}\colon U_{\GH}\longrightarrow \F = \big\{(\ga,s)\in LM\x I \ |\ \ga(0)=\ga(s) \big\}= e_I^{-1}(M\subset M\x M)
\end{equation*}%
by concatenating with a geodesic stick to force a self-intersection: 
\begin{equation*}
R_{\CS}(\gamma ,s )=(\gamma[0,s]\star\overline{\gamma(s)\ga(0)} \star_s \overline{\ga(0)\gamma(s)} \star \ga[s,0]\ ,\ s )
\end{equation*}%
where we choose the parametrization of the concatenated loop so that it exactly passes through $\ga(0)$ at time $s$; this is possible even if $s=0$ or $1$  as in that case $\ga(0)=\ga(s)$ to begin with and the geodesic sticks are thus length $0$. See also Figure~\ref{fig:sticks}(b).  

We can consider the sequence of maps
\begin{multline*}
  C_p(LM) \xrightarrow{\x I} C_{p+1}(LM\x I)\xrightarrow{[\tau_{\GH}\cap]} C_{p+1-n}(U_{\GH}) 
  \xrightarrow{R_{\GH}} C_{p+1-n}(\F)  \xrightarrow{\cut} C_{p+q-n}(LM\x LM).
\end{multline*}
totally analogous to the maps (\ref{equ:CS}) defining the product above. 
The only new issue that arises in the coproduct compared to the product is that the first map in the sequence, crossing with an interval, is not a chain map because the interval has non-trivial boundary. 
This corresponds to the fact that the operation is now parametrized by an interval $I$. To obtain an induced operation on homology, we need to appropriately kill the resulting ``boundary operation'' at the endpoints of the interval. The simplest way to do this is to consider the operation as a relative operation, noting that, when $s=0$ or $1$, the above sequence of maps creates a left or right constant loop.
\begin{definition}\label{def:GeoCo}
  The following sequence of chain maps is a chain model for the Goresky-Hingston-Sullivan coproduct: 
\begin{multline}\label{equ:GH}
 \vee\colon  C_p(LM,M) \xrightarrow{\x I} C_{p+1}(LM\x I,LM\x \del I \cup M\x I)\xrightarrow{[\tau_{\GH}\cap]} C_{p+1-n}(U_{\GH}, LM\x \del I \cup M\x I) \\
  \xrightarrow{R_{\GH}} C_{p+1-n}(\F,LM\x \del I \cup M\x I)  \xrightarrow{\cut} C_{p+q-n}(LM\x LM,M\x LM\cup LM\x M)
\end{multline}
\end{definition}
 This sequence of maps now indeed induces a well-defined degree $1-n$ coproduct on $H_*(LM,M)$:
$$\vee: H_p(LM,M)\rar H_{p+1-n}(LM\x LM,M\x LM\cup LM\x M);$$
if we work with field coefficients, the target is isomorphic to  $H_*(LM,M)^{\ot 2}$. It is shown in \cite[Proposition 2.12]{HinWah0} that this chain level description of the Goresky-Hingston-Sullivan coproduct agrees with the definition given in \cite{GorHin} using a tubular neighborhood of $\F$ inside $LM\x I$ away from the boundary $LM\x \del I$, together with a limit argument reach to the boundary. 

Applying the evaluation map $e_I$ gives a diagram of the same form as Diagram~(\ref{equ:CSlift}), with the coproduct replacing the Chas-Sullivan product on the top row, but now with intersection product {\em relative to $M$} on the bottom row, which is a trivial operation! Hence there is no formal way in which the homology loop coproduct is a lift of the homology intersection product.  We will however see in Section~\ref{sec:computations} that the coproduct still can be computed by an appropriate geometric intersection, for nice enough geometric cycles, away from the ``trivial self-intersections'' coming from constant loops or from the intersection times  $s=0$ and $s=1$. 

%\Nnote{remove as discussed below?} Note that the coproduct obtained by cutting the loops precisely at time $s=\frac{1}{2}$ only is trivial \cite{Tam}, essentially because it is homotopic to the coproduct coming from cutting at time $0$ or $1$ instead.

\begin{remark}[Lifting the coproduct to a non-relative operation]\label{rem:lifts}
There exists several ways to lift  the coproduct $\vee$ to a non-relative operation.
\begin{enumerate}
\item One such lift is the {\em extension by zero} of \cite[Sec 4]{HinWah0}, that uses the splitting $H_*(LM)\cong H_*(LM,M)\oplus H_*(M)$ coming from the inclusion of the constant loops and the evaluation  $\operatorname{cst}:M\rightleftarrows LM :\ev_0$, declaring the coproduct to be zero on constant loops.

\item If the Euler characteristic of the manifold is zero, one can instead use a nowhere vanishing vector field $\overline v$ to define such an extension, by replacing the diagonal $M\subset M\x M$ in the above definition of the coproduct, with the homotopy equivalent subspace $\De_{\overline v}M=\{(m,\exp_m\overline v_m)\in M\x M\ |\ m\in M\}$. Indeed, if the vector field has no zeros, the coproduct will then automatically be trivial at the special points with $s=0$ or $s=1$. See also \cite[Sec 3.4]{NaeWil19} for an analogous definition of a lifted coproduct in the $\chi(M)=0$ case, using instead a lift of the Thom class. %\Fnote{changed the word "different" to "analogous", the two definitions most likely agree. The word "different" could be misread as "probably inequivalent".}

  If the Euler characteristic is not zero, one can instead pick a vector field vanishing only in the neighborhood of a single point, which will yield a coproduct in reduced homology of the loop space instead, corresponding to what we will see in the algebraic version of the coproduct, see Definition~\ref{defn:algebraic coproduct}. 
%
%  Different choices of vector fields may give different lifts; this choice is equivalent to finding a lift of $\tau_M\in H^n(M\x M,U_M^c)\cong H^n(TM,STM)$  to a  class
%in $H^{n-1}(STM)$ in the long exact sequence of the pair $(TM,STM)$, which has space of choices given by $H^{n-1}(TM)\cong H^{n-1}(M)\cong H_1(M)$. 

\item The following variant of the previous idea has been described for the case of surfaces in \cite{kawazumi2014regular}. Instead of attaching the non-vanishing vector field to the manifold $M$ one can attach it to the loop. That is one considers loops in the unit tangent bundle of $M$. In the case of surfaces, such loops can be identified with regular homotopy classes of immersed curves. Moreover, in case the surface has a non-vanishing vector field, the above construction is recovered by using that every homotopy class of a loop in a surface has a unique representative as an immersed loop with rotation number $0$ with respect to the vector field. This is the point of view taken in \cite{alekseev2018goldman}.

%\Nnote{Something about curves of 0 winding number in surfaces? Florian, can you give a ref for that?}
  
\item  As we will see in Section~\ref{sec:Tate2} in the algebraic context, following the paper \cite{RivWan19} (see \cite{CiFrOa10, CieOan20} for a geometric version), the loop product and coproduct  together define a single (non-relative) product on the {\em Tate-Hochschild complex}, a complex that combines both the chains and cochains of the loop space, attached together using the Euler class (see Section~\ref{sec:Tate}). When the Euler characteristic of the manifold vanishes, the Tate complex splits and this recovers a non-relative cohomology product, dual to the homology coproduct.

\end{enumerate}
\end{remark}

\subsection{Computation via geometric intersections}\label{sec:computations}
Recall that two smooth maps $f:X\to M$ and $g:Y\to M$ are {\em transverse} if for every $x,y$ such that $f(x)=m=g(y)$,  we have $f_*T_xX+ g_*T_yY =T_mM$. 
Because the product and coproduct are defined as lifts of the intersection product along evaluation maps, they can both be computed by geometric intersection, under appropriate transversality assumptions on the cycles representing the homology classes:
\begin{proposition} \cite[Propositions 3.1 and 3.7]{HinWah0}\label{prop:compute}
\begin{enumerate}
\item If  $Z_1\colon \Si_1\to LM$ and $Z_2\colon \Si_2\to LM$ are smooth cycles  with the property that the maps  $\ev_0\circ Z_1\colon \Si_1\to M$ and $\ev_0\circ Z_2\colon \Si_2\to M$ are transverse, then the loop product
  $$Z_1\wedge Z_2=(Z_1\star Z_2)|_{\Si_1\x_{\ev_0} \Si_2} \ \in \ H_*(LM)$$
  is the concatenation of the loops of $Z_1$ and $Z_2$ along the locus of basepoint-intersections $\Si_1\x_{\ev_0} \Si_2\subset \Si_1\x \Si_2$, oriented as stated in \cite{HinWah0}.
\item   If $Z\colon (\Sigma,\Si_0) \rightarrow  (LM,M)$ is a smooth relative cycle with the property that the restriction of $e_I\circ (Z\times I): \Si\x I \to M\x M$ to $(\Si\minus \Si_0)\x (0,1)$ is  transverse to the diagonal, 
%\begin{align*}
%E(Z):=e_I\circ (Z\times I)|_{(\Si\x I)\minus \Si_\B } \colon (\Si\x I)\minus \Si_\B & \ \longrightarrow\ M\times M\\
%(\sigma ,t) &\ \ \mapsto\ \; \big(Z(\sigma )(0),Z(\sigma)(t)\big)
%\end{align*}
 %is a smooth map transverse to the diagonal $\Delta\colon M\to M\x M$. 
 then 
\begin{equation*}
\vee Z=\cut\circ (Z\times I)|_{\overline{\Si_{\De}}}\in H_{*}(LM\times LM,M\times LM \cup LM\times M)
\end{equation*}
for $\overline{\Si_\De}$ the closure in $\Si\x I$ of the locus of basepoint self-intersecting loops $\Si_\Delta \subset \ (\Si\minus \Si_0)\x (0,1)$,
oriented as stated in \cite{HinWah0}.
% :=(e_I\circ (Z\times I))^{-1}(\De M)\  
  \end{enumerate}
  \end{proposition}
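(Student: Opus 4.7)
My plan is to establish both parts by chasing the chain-level definitions~(\ref{equ:CS}) and~(\ref{equ:GH}) step by step, reducing each to the classical geometric computation of the intersection product via the naturality of the cap product under the evaluation maps. The result I would invoke as a black box is the Thom-Pontrjagin principle that was already quoted for the intersection product: for a smooth closed manifold $\Si$ and a smooth map $f\colon \Si\to M\x M$ transverse to the diagonal, the cap $f_*[\Si]\cap \tau_M$ is represented, up to the excision quasi-isomorphism of~(\ref{equ:cap}), by the pushforward of the fundamental class of $f^{-1}(\De M)\subset \Si$, with its induced co-orientation.

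For part~(1), I would apply this principle to $f=(\ev_0\x \ev_0)\circ (Z_1\x Z_2)\colon \Si_1\x \Si_2\to M\x M$. The transversality hypothesis on $\ev_0\circ Z_1$ and $\ev_0\circ Z_2$ is exactly what is needed to make $f$ transverse to $\De M$, and the preimage $f^{-1}(\De M)$ is the fiber product $\Si_1\x_{\ev_0}\Si_2$. By naturality, capping $(Z_1\x Z_2)_*[\Si_1\x\Si_2]$ with $\tau_{\CS}=(\ev_0\x \ev_0)^*\tau_M$ produces a chain supported in $U_{\CS}$ whose class is represented by this fiber product, now viewed inside $\Figeight$. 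The retraction $R_{\CS}$ restricts to the identity on $\Figeight$, and applying $\concat$ finishes with the family of concatenated loops $(Z_1\star Z_2)|_{\Si_1\x_{\ev_0}\Si_2}$. The sign is then determined by the conventions for the Thom class, the cap product, and the identification $\nu_M$, as tracked in~\cite{HinWah0}.

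For part~(2), I would use the analogous strategy with $f=e_I\circ (Z\x I)\colon \Si\x I\to M\x M$, noting two differences from part~(1). First, transversality is only assumed on the open locus $(\Si\minus \Si_0)\x (0,1)$, so on that locus $f^{-1}(\De M)=\Si_\De$ is a smooth submanifold of the expected codimension, and its closure $\overline{\Si_\De}$ inside $\Si\x I$ is a relative cycle in the pair $(\Si\x I,\, \Si\x \del I\cup \Si_0\x I)$. Second, the whole construction is now relative, with the subpairs $LM\x \del I\cup M\x I$ on the source side and $M\x LM\cup LM\x M$ on the target side, precisely accommodating the degenerate behavior at $s\in \{0,1\}$ and on $\Si_0$. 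Following~(\ref{equ:GH}), I would then push $\overline{\Si_\De}$ forward through $R_{\GH}$ (which is the identity on $\F$) and through $\cut$ to obtain the claimed representative.

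The main obstacle will be part~(2): one must justify that the chain-level cap product with $\tau_{\GH}$ is indeed represented by $\overline{\Si_\De}$ despite the failure of transversality at the boundary. The key point is that any non-generic intersection occurring on $\Si_0\x I$ or $\Si\x \del I$ produces chains supported in $LM\x \del I\cup M\x I$ (constant loops or length-zero sticks), which are trivial in the relative pair appearing in~(\ref{equ:GH}); hence these contributions are killed in relative homology. A secondary technical point is orientation bookkeeping: the signs introduced by the cross product with $I$, the co-orientation of the diagonal, and the excision isomorphism through the tubular neighborhood $\nu_M$ all accumulate, and agree with those of~\cite{HinWah0} only after careful tracking.
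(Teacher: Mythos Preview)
The paper does not actually prove this proposition: it is stated as a direct citation of \cite[Propositions~3.1 and~3.7]{HinWah0} and is used as a black box for the lens space computations that follow. So there is no ``paper's own proof'' to compare against; your proposal is a reconstruction of the argument behind the cited result.

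That said, your sketch is along the right lines and matches the strategy one finds in \cite{HinWah0}: reduce, via naturality of the cap product along the evaluation maps, to the classical fact that capping with the Thom class of the diagonal is geometrically represented by the transverse preimage of $\Delta M$. Your identification of the main subtlety in part~(2)---that transversality is only available on the open part and that the non-transverse boundary contributions land in the subspaces $LM\times\partial I\cup M\times I$ and hence die in the relative pair---is exactly the point. One small caution: for~(2) you should also argue that $\overline{\Sigma_\Delta}$ really is a relative cycle (i.e.\ that its boundary lies in $\Sigma\times\partial I\cup\Sigma_0\times I$), which uses the transversality on the interior together with compactness; this is implicit in your outline but worth making explicit.
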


We illustrate this proposition here through a loop product and coproduct computation for  lens spaces. The coproduct computation will be used in Section~\ref{sec:invariance0} to show that the coproduct is not homotopy invariant, following \cite{Nae21}. 

\medskip

Let $S^3$ be the $3$-sphere, considered as the unit sphere in $\mathbb{C}^2$. We will write elements of $S^3$ %either as pairs of complex numbers $(z_1,z_2)$, or
in spherical coordinates as tuples $(\un r,\un \theta)=((r_1,\theta_1),(r_2,\theta_2))$ with $\theta_i\in \R/\Z$ and $r_i\ge 0$, satisfying $r^2_1+r^2_2=1$. 
The lens space $\Le_{p,q}$, for $p,q$ coprime, is the quotient of $S^3$ by the relation 
%$$(z_1,z_2)\ \sim\  (e^{\frac{2\pi i}{7}}z_1,e^{\frac{2k\pi i}{7}}z_2),$$ 
%or in spherical notation
$$((r_1,\theta_1),(r_2,\theta_2)) \ \sim \ ((r_1,\theta_1+\frac{1}{p}),(r_2,\theta_2+\frac{q}{p})).$$
This relation comes from the action of the torus $S^1\x S^1$ on $S^3\subset \mathbb{C}^2$ rotating each coordinate, where we have picked a particular subgroup $\Z/p$ inside $S^1\x S^1$.  Note that there is a residual torus action on the lens space: 
$$\begin{array}{llcl}\alpha: &(S^1\x S^1) \x \Le_{p,q} &\to &\Le_{p,q},\\
                             & ((s,t),(\un r,\un \theta)) &\mapsto & %\alpha_{s,t}(\un r,\un \theta):=
                                                                     ((r_1,\theta_1+\frac{s}{p}),(r_2,\theta_2+\frac{sq}{p}+t)).
                             \end{array}$$
%taking $((s,t),(\un r,\un \theta))$ to
%$$\alpha_{s,t}(\un r,\un \theta):=((r_1,\theta_1+\frac{s}{p}),(r_2,\theta_2+\frac{sq}{p}+t)).$$ 
In particular, given a point $(\un r,\un \theta)$ on the lens space, and a pair of integers $(\ell,m)$, %and a rational direction $(\ell,m)$ in the torus,
we get a loop $t\mapsto (\ell t,m t)$ in the torus, and hence a loop $\ga^{\ell,m}_{\un r,\un \theta}$ in the lens space  based at  $(\un r,\un \theta)$ by composing with the action. Explicitly,   the loop $\ga^{\ell,m}_{\un r,\un \theta}: S^1\to \Le_{p,q}$ is defined by 
$$\gamma^{\ell,m}_{\un r,\un \theta}(t)= ((r_1,\theta_1+\frac{\ell t}{p}),(r_2,\theta_2+\frac{q\ell t}{p}+mt)).$$
As the action is continuous, varying the startpoint $(\un r,\un \theta)$ of the loop gives a family of loops $\rho_{\ell,m}\colon \Le_{p,q} \to L\Le_{p,q}$ parametrized by the lens space itself, and hence 
%defines the following family of classes in the homology of the loop space $L\Le_{p,q}$: 
 for each pair of integers $(\ell,m)$ a class
 $$\rho_{\ell,m}\in H_3(L\Le_{p,q}),$$
 where we use the same notation $\rho_{\ell,m}$. 
Note that each class $\rho_{\ell,m}$ is non-trivial as it maps to the fundamental class of $\Le_{p,q}$ under the evaluation map
$$\ev_0: H_3(L\Le_{p,q}) \rar H_3(\Le_{p,q}).$$
We will here compute the loop products and coproducts of these classes.

\smallskip

We consider first the product:
$$\wedge\colon H_3(L\Le_{p,q})\ot H_3(L\Le_{p,q}) \rar H_{3+3-3}(L\Le_{p,q})=H_3(L\Le_{p,q}).$$
%evaluated on the above classes. 

\begin{proposition}\label{prop:prodrho} %Let $\rho_{\ell,m}\in H_3(L\Le_{p,q})$ be as defined above, for $(\ell_i,m_i)$ pairs of coprime natural numbers.
  The Chas-Sullivan loop product of the classes $\rho_{\ell,m}\in H_3(L\Le_{p,q})$ defined above, is given by summing the indices: 
   $$\rho_{\ell_1,m_1}\wedge \rho_{\ell_2,m_2}=\rho_{\ell_1+\ell_2,m_1+m_2}.$$
\end{proposition}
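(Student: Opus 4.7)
The plan is to apply the geometric intersection formula of Proposition~\ref{prop:compute}(1) directly. Each cycle $\rho_{\ell,m}: \Le_{p,q} \to L\Le_{p,q}$ satisfies $\ev_0 \circ \rho_{\ell,m} = \id_{\Le_{p,q}}$, so in computing the product of $\rho_{\ell_1,m_1}$ with $\rho_{\ell_2,m_2}$ the two evaluation maps are both the identity on $\Le_{p,q}$. These maps are transverse in the required sense: at any $x$, one has $T_x\Le_{p,q} + T_x\Le_{p,q} = T_x\Le_{p,q}$. The fiber product $\Sigma_1 \x_{\ev_0} \Sigma_2$ is then the diagonal, i.e.~$\Le_{p,q}$ itself, and the proposition gives that $\rho_{\ell_1,m_1} \wedge \rho_{\ell_2,m_2}$ is represented by the cycle
\[
(\un r, \un \theta) \longmapsto \ga^{\ell_1,m_1}_{\un r,\un \theta} \star \ga^{\ell_2,m_2}_{\un r,\un \theta},
\]
parametrized by $\Le_{p,q}$, with the orientation prescribed in \cite{HinWah0}.

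It remains to construct a based homotopy from this concatenated family to $\rho_{\ell_1+\ell_2,m_1+m_2}$ that depends continuously on the basepoint. For fixed $(\un r,\un \theta)$, both loops in $\Le_{p,q}$ are the image under the orbit map $(s,t)\mapsto \alpha((s,t),(\un r,\un \theta))$ of explicit loops in the torus $S^1\x S^1$: for the concatenation, the piecewise-linear loop that first traces $t\mapsto (\ell_1 t,m_1 t)$ and then $t\mapsto (\ell_2 t, m_2 t)$; for $\ga^{\ell_1+\ell_2,m_1+m_2}_{\un r,\un \theta}$, the linear loop $t\mapsto ((\ell_1+\ell_2)t,(m_1+m_2)t)$. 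Both lift to paths in the universal cover $\R^2$ from $(0,0)$ to $(\ell_1+\ell_2,m_1+m_2)$, so are connected by the straight-line homotopy rel endpoints. This projects to a based homotopy in $S^1\x S^1$ and hence, via the orbit map, to one in $\Le_{p,q}$.

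Because this torus homotopy is independent of the basepoint $(\un r,\un \theta)$ and the action $\alpha$ is continuous, we obtain a continuous map $\Le_{p,q}\x I\to L\Le_{p,q}$ giving a homology between $\rho_{\ell_1,m_1}\wedge \rho_{\ell_2,m_2}$ and $\rho_{\ell_1+\ell_2,m_1+m_2}$ in $H_3(L\Le_{p,q})$. The only delicate point is matching signs: one must check that the orientation of the fiber product dictated by \cite{HinWah0} in the (degenerate but transverse) case of two identity maps coincides with the orientation of $\Le_{p,q}$ used to define the classes $\rho_{\ell,m}$. This is the main thing to verify carefully, but it is a routine consequence of the standard sign convention, since the fiber product of two copies of $\id_{\Le_{p,q}}$ is canonically oriented by the base.
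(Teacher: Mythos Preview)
Your proof is correct and follows essentially the same approach as the paper: apply Proposition~\ref{prop:compute}(1) using that $\ev_0\circ\rho_{\ell,m}=\id_{\Le_{p,q}}$ so the fiber product is the diagonal, then use a homotopy in the torus (transported via the action $\alpha$) to identify the concatenation with $\rho_{\ell_1+\ell_2,m_1+m_2}$. Your treatment is in fact slightly more explicit about the torus homotopy (via the straight-line homotopy in the universal cover) and about the orientation issue, which the paper does not discuss.
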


\begin{proof}
The cycles $\rho_{\ell,m}:\Le_{p,q}\to L\Le_{pq}$ are smooth cycles parametrized  $\Le_{p,q}$.  To apply Proposition~\ref{prop:compute}, we need to check that the maps $$\Le_{p,q} \xrightarrow{\rho_{\ell_i,m_i}}L\Le_{p,q}\xrightarrow{\ \ev_0\ } \Le_{p,q}$$ are transverse. But for each $(\ell_i,m_i)$, this composition is the identity on the lens space, so the maps are certainly transverse,  and the locus of basepoint-intersections is the diagonal $\De\Le_{p,q}\subset \Le_{p,q}\x \Le_{p,q}$. The product is thus explicitly given by 
$$\rho_{\ell_1,m_1}\wedge \rho_{\ell_2,m_2}=(\rho_{\ell_1,m_1}\star \rho_{\ell_2,m_2})|_{\De\Le_{p,q}}\colon \Le_{p,q}\equiv \De \Le_{p,q}\rar L\Le_{p,q}$$
for $\star$ the concatenation of the loops in the image at their common basepoint. At each point $(\un r,\un \theta)$ in $\Le_{p,q}$, we are thus left to compute the concatenation
$\ga^{\ell_1,m_1}_{\un r,\un \theta}\star \ga^{\ell_2,m_2}_{\un r,\un \theta}$
%\colon t\mapsto \left\{\begin{array}{ll}((r_1,\theta_1+\frac{\ell_1 (2t)}{p}),(r_2,\theta_2+\frac{q\ell_1 (2t)}{p}+m_1(2t))) & 0\le t \le \frac{1}{2} \\
%                                                                                 ((r_1,\theta_1+\frac{\ell_2 (2t-1)}{p}),(r_2,\theta_2+\frac{q\ell_2 (2t-1)}{p}+m_2(2t-1))) & \frac{1}{2}\le t\le 1 
%                                                                               \end{array}\right.$$
which is exactly the image under the torus action of the concatenation of the loops $(\ell_1,m_1)$ and $(\ell_2,m_2)$ in the torus. This concatenation in the torus is homotopic to the loop  $(\ell_1+\ell_2,m_1+m_2)$ (corresponding to the fact that $\pi_1(S^1\x S^1)\cong \Z\x \Z$) and hence the above product is homotopic the loop $\ga^{\ell_1+\ell_2,m_1+m_2}_{\un r,\un \theta}$.  As this homotopy originates in the torus, it defines a continuous homotopy over the lens space. It follows that the Chas-Sullivan product of such classes is  as claimed. 
  \end{proof}

  The coproduct of homology classes of degree 3 in $L\Le_{p,q}$ is a map
  $$\vee: H_3(L\Le_{p,q},\Le_{p,q})\rar H_{1}(L\Le_{p,q}\x L\Le_{p,q},\Le_{p,q}\x L\Le_{p,q}\cup L\Le_{p,q}\x \Le_{p,q}).$$
 % given that $3+1-3=1$ for the degree of the target.
For the classes $\rho_{\ell,m}$, it will given in terms of $\beta$--classes in the target, that we describe now.

  Let $\la: S^1\to \Le_{p,q}$ be the loop defined by 
  $\lambda(t)=((1,\frac{t}{p}),0)$, tracing the points $(\un r,\un \theta)\in \Le_{p,q}$ with $r_2=0$. This is a generator of  $\pi_1\Le_{p,q}\cong \Z/p$.
  Note that $\la=\ga_{((1,0),0)}^{1,0}$ is the evaluation of the class $\rho_{1,0}$ at $((1,0),0)\in \Le_{p,q}$.
  In particular, it is freely homotopic to  $\ga_{(0,(1,0))}^{1,0}$, the evaluation of  $\rho_{1,0}$ at  $(0,(1,0))$, where we note that  $\ga_{(0,(1,0))}^{1,0}=(\la')^{\star q}$ for  $\la': S^1\to \Le_{p,q}$ defined by 
  $\lambda'(t)=(0,(1,\frac{t}{p}))$, the loop tracing the points $(\un r,\un \theta)$  with $r_1=0$.

%  An explicit homotopy $\la\simeq_{H}\la'$ can be obtained by setting $H(\tau,t)=((\tau,\frac{t}{p}),(1-\tau,\frac{qt}{p}))$.  \Fnote{the two maps are $\rho_{0,1}$ at two points, so this gives a homotopy, $\Le_{p,q}$ being path-connected.}
  
The coproduct of the classes $\rho_{\ell,m}$ will be given by applying the cut map  to families of figure eights 
  $$\be_{k,k'}: S^1\to L\Le_{p,q}\x_{\Le_{p,q}} L\Le_{p,q}\subset  L\Le_{p,q}\x L\Le_{p,q},$$
  based at the points of $\la$ and  defined by $\be_{k,k'}(t)= [s\mapsto ((1,\frac{t+ks}{p}),0)]\star[s\mapsto ((1,\frac{t+k's}{p}),0)]$.
  We denote likewise by $\beta_{k,k'}\in H_1(L\Le_{p,q}\x L\Le_{p,q})$
the associated homology class. 
 
%\Fnote{I would prefer to break up the following lemma, and the preceding definitions. First define $\beta$'s as classes in $H^1(LM \times LM)$ represented by beta. Then say they can also be represented by $\beta^\prime$'s. Then say that they are linearly independent over $F_p$ using evaluation at $0$ on each component)}

Just like the loop $\la$ is freely homotopic to $\la'^{\star q}$, the family of figure eights $\beta_{k,k'}$ is freely homotopic to a family loops with basepoints parametrized by $\la'^{\star q}$: 
  \begin{lemma}\label{lem:beta'}
 Let $\be'_{k,k'}: S^1\to L\Le_{p,q}\x_{\Le_{p,q}} L\Le_{p,q}\subset  L\Le_{p,q}\x L\Le_{p,q}$
 be the family of figure eights based at the points of $\la'$ defined by $\be'_{k,k'}(t)= [s\mapsto (0,(1,\frac{t+ks}{p}))]\star[s\mapsto (0,(1,\frac{t+k's}{p}))]$. 
 Then $$\beta_{k,k'}=q\beta'_{qk,qk'}\in H_1(L\Le_{p,q}\x L\Le_{p,q}).$$ 
\end{lemma}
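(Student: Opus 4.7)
The plan is to construct an explicit free homotopy, parametrized by a path between the two basepoints $((1,0),0)$ and $(0,(1,0))$ in $\Le_{p,q}$, from the family $\beta_{k,k'}$ to a family whose basepoint traces $(\lambda')^{\star q}$ and whose two loop components agree with those of $\beta'_{qk,qk'}$. Concluding is then a matter of observing that wrapping the basepoint around $\lambda'$ $q$ times multiplies the homology class of the family by $q$.

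First I would recast the whole construction in terms of the torus action $\alpha$. The basepoint of $\beta_{k,k'}$ is $\lambda(t)=\alpha_{t,0}((1,0),0)$, and the two loops making up the figure eight are $s\mapsto \alpha_{t+ks,0}((1,0),0)$ and $s\mapsto \alpha_{t+k's,0}((1,0),0)$. Pick any path $\gamma\colon[0,1]\to \Le_{p,q}$ from $((1,0),0)$ to $(0,(1,0))$, e.g.\ $\gamma(u)=((\cos(\pi u/2),0),(\sin(\pi u/2),0))$, and define
\[
H\colon[0,1]\times S^1\longrightarrow L\Le_{p,q}\times_{\Le_{p,q}} L\Le_{p,q},\qquad
H(u,t)=\bigl(s\mapsto \alpha_{t+ks,0}(\gamma(u)),\ s\mapsto \alpha_{t+k's,0}(\gamma(u))\bigr).
\]
For each $u$, $H(u,\cdot)$ is a $1$-cycle in the fibre product, so $[H(0,\cdot)]=[H(1,\cdot)]$ in $H_1$.

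Second, I would identify the two ends of this homotopy. At $u=0$, using $r_2=0$, one gets $\alpha_{t+ks,0}((1,0),0)=((1,(t+ks)/p),0)$, so $H(0,\cdot)=\beta_{k,k'}$. At $u=1$, using $r_1=0$, one gets $\alpha_{t+ks,0}(0,(1,0))=(0,(1,(t+ks)q/p))=(0,(1,(qt+qks)/p))$, so
\[
H(1,t)=\beta'_{qk,qk'}(qt).
\]
Since $t\mapsto qt$ has degree $q$ as a self-map of $S^1$, the cycle $t\mapsto \beta'_{qk,qk'}(qt)$ represents $q\,[\beta'_{qk,qk'}]$ in $H_1(L\Le_{p,q}\times L\Le_{p,q})$. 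Combining this with the homology invariance of $H$ gives the desired identity $[\beta_{k,k'}]=q[\beta'_{qk,qk'}]$.

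I do not anticipate a genuine obstacle: the entire argument is an unpacking of the torus symmetry of the construction, and continuity of $\alpha$ and of $\gamma$ makes $H$ a bona fide homotopy of families. The only small care point is keeping track of the identification $r_1=0$ (resp.\ $r_2=0$) which trivializes one of the two $S^1$-factors of the torus action at each endpoint of $\gamma$; this is exactly what converts the index $k$ at the $\lambda$-end into the index $qk$ at the $\lambda'$-end, in precise parallel with the free homotopy $\lambda\simeq (\lambda')^{\star q}$ recalled just before the statement.
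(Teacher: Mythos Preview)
Your proof is correct and follows essentially the same approach as the paper: both construct the same explicit homotopy (yours written via the torus action $\alpha$ applied along a path from $((1,0),0)$ to $(0,(1,0))$, the paper's written out in coordinates), and at the $\lambda'$--end both land on the cycle $t\mapsto \beta'_{qk,qk'}(qt)$. You are in fact more careful than the paper on two points: your path $\gamma(u)=((\cos(\pi u/2),0),(\sin(\pi u/2),0))$ genuinely stays on the sphere, and you make explicit that the factor $q$ arises from the degree--$q$ self-map of $S^1$, which the paper leaves implicit in its phrase ``free homotopy $\beta_{k,k'}\simeq_H\beta'_{qk,qk'}$''.
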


\begin{proof}
 % We can lift the homotopy  $\la\simeq_{H}\la'$ to a
An explicit free homotopy $\beta_{k,k'}\simeq_{H}\beta'_{qk,qk'}$ between the families of loops is given by setting
$H(\tau,t)= [s\mapsto ((\tau,\frac{t+ks}{p}),(1-\tau,\frac{q(t+ks)}{p}))]\star[s\mapsto ((\tau,\frac{t+k's}{p}),(1-\tau,\frac{q(t+k's)}{p}))]$.
  \end{proof}

  \begin{lemma}\label{lem:beta}
 %   Each class $\be_{k,k'}\in H_1(L\Le_{p,q}\x L\Le_{p,q})$ is non-zero,
    We have that
\begin{enumerate}
\item   $\beta_{k,k'}=\beta_{h,h'}\in H_1(L\Le_{p,q}\x L\Le_{p,q})$ if and only if $k=h\!\mod p$ and $k'=h'\!\mod p$.
    %, and $\beta_{k,k'}=\beta'_{qk,qk'}$ in homology for all $k,k'$.
\item  The relative classes $$\{\be_{k,k'}\}_{\substack{ 0<k<p\\ 0<k'<p}}\in H_1(L\Le_{p,q}\x L\Le_{p,q},\Le_{p,q}\x L\Le_{p,q}\cup L\Le_{p,q}\x \Le_{p,q})$$
  are linearly independent over $\mathbb{Z}_p$. 
%\Fnote{It seems we did not assume $p$ to be prime. It's hopefully still clear if we just say linearly independent over $\Z_p$?}
  %is non-zero precisely when $k,k'\neq 0\!\mod p$.
%\item The classes $\{\beta_{k,k'}\}_{\substack{ 0\le k\le p-1\\ 0\le k'\le p-1}}$ are linearly independent in $H_1(L\Le_{p,q}\x L\Le_{p,q};\mathbb{F}_p)$. 
  \end{enumerate}
    \end{lemma}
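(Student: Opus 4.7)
The approach is to decompose the free loop space into path components indexed by $\pi_1(\Le_{p,q})=\Z/p$ (components correspond to conjugacy classes, which coincide with group elements since $\pi_1$ is abelian), to compute $H_1$ of each component using the evaluation fibration $\Omega\Le_{p,q}\to L\Le_{p,q}\to \Le_{p,q}$, and to analyze the relative homology of the pair in part (2) component by component. The key input is that the universal cover of $\Le_{p,q}$ is $S^3$, so the fiber $\Omega S^3$ is simply connected with $H_1(\Omega S^3)=0$.

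For part (1), the two loops making up $\be_{k,k'}(t)$ represent $\la^k$ and $\la^{k'}$ in $\pi_1(\Le_{p,q})$, so $\be_{k,k'}$ takes values in the path component of $L\Le_{p,q}\x L\Le_{p,q}$ indexed by $(k,k')\in (\Z/p)^2$. If $k\not\equiv h$ or $k'\not\equiv h'$ modulo $p$, the classes $\be_{k,k'}$ and $\be_{h,h'}$ then live in distinct direct summands of $H_1(L\Le_{p,q}\x L\Le_{p,q})$ and so cannot coincide (each being nonzero, by the discussion below). For the converse I would establish that $(\ev_0)_*\colon H_1(L_k\Le_{p,q})\to H_1(\Le_{p,q})$ is an isomorphism: the Serre spectral sequence of $\Omega\Le_{p,q}\to L\Le_{p,q}\to \Le_{p,q}$ has its entire $q=1$ column vanishing because $H_1(\Omega S^3)=0$, and $E_2^{2,0}=H_2(\Le_{p,q})=0$, so the only surviving contribution to $H_1$ of the total space is the edge term $E_2^{1,0}=H_1(\Le_{p,q})=\Z/p$. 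Applying $\ev_0$ to the family $t\mapsto[s\mapsto((1,\tfrac{t+ks}{p}),0)]$ yields the loop $\la$ for every $k$, which represents the generator of $H_1(\Le_{p,q})$; hence the $H_1$-class of this family depends only on the residue of $k$ modulo $p$. Transferring to the product via K\"unneth gives $\be_{k,k'}=\be_{h,h'}$ whenever both residues match.

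For part (2), the decisive observation is that the ``boundary'' subspace $\Le_{p,q}\x L\Le_{p,q}\cup L\Le_{p,q}\x \Le_{p,q}$ consists of pairs of loops in which at least one is constant, and constant loops live entirely in the trivial component $L_0\Le_{p,q}$. Therefore, for $0<k,k'<p$, the component $L_k\Le_{p,q}\x L_{k'}\Le_{p,q}$ is disjoint from the boundary subspace, so it contributes a direct summand $H_1(L_k\Le_{p,q}\x L_{k'}\Le_{p,q})\cong \Z/p\oplus \Z/p$ to the relative $H_1$ (using K\"unneth and the isomorphism from part (1)). In this summand $\be_{k,k'}$ corresponds to the diagonal element $(1,1)$, a class of order exactly $p$. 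Consequently any $\Z/p$-linear relation $\sum c_{k,k'}\be_{k,k'}=0$ projects to $c_{k,k'}\cdot(1,1)=0$ in each summand separately, forcing $c_{k,k'}=0$ in $\Z/p$ for every $(k,k')$ with $0<k,k'<p$. This is the claimed linear independence.

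The most delicate step is the $H_1$-computation for each component $L_k\Le_{p,q}$, which requires that the monodromy of $\pi_1(\Le_{p,q})$ on $H_0(\Omega S^3)=\Z$ be trivial (automatic since the fiber is connected) and that no differentials of the Serre spectral sequence can disturb $E_2^{1,0}$. Both facts follow immediately from $H_1(\Omega S^3)=0$ (which kills the adjacent column) together with a dimension count ruling out incoming and outgoing differentials on $E_2^{1,0}$, so this reduces to a straightforward edge-map identification and the remainder of the argument is bookkeeping of components plus K\"unneth.
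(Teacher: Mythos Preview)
Your argument is correct and complete. The route you take differs from the paper's in the ``if'' direction of part~(1): the paper exhibits an explicit free homotopy $\beta_{k,k'}\simeq\beta_{k+np,k'+mp}$ by pushing a null-homotopy of $\lambda^{\star p}$ along the residual torus action on $\Le_{p,q}$, whereas you instead compute $H_1(L_c\Le_{p,q})\cong\Z/p$ via the Serre spectral sequence of the evaluation fibration (using $H_1(\Omega S^3)=0$) and conclude equality because every such family is detected by $(\ev_0)_*$ as the same generator. Your approach is cleaner in that it avoids constructing any homotopy and would work verbatim for any space with $2$-connected universal cover; the paper's approach is more geometric and ties the identity directly to the torus symmetry that produced the $\rho$-- and $\beta$--classes in the first place. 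For part~(2) the two arguments coincide in spirit (different components, disjoint from the constant-loop locus), with your version spelling out the K\"unneth summand and the order of the diagonal element more explicitly than the paper does. One minor remark: the vanishing of $E^2_{2,0}=H_2(\Le_{p,q})$ that you mention is not actually needed---no differential can enter or leave $E^2_{1,0}$ regardless---but it does no harm.
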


    \begin{proof}
      The evaluation at 0 takes the family of figure eights $\beta_{k,k'}$ to the generator $\la$ of $\pi_1(\Le_{p,q})\cong \Z/p$. Hence the map $H_1(L\Le_{p.q}\x L\Le_{p,q})\to H_1(\Le_{p,q})$ projecting on the first component and evaluating at 0, takes $\be_{k,k'}$ to the generator of $H_1(\Le_{p,q})$. In particular, each class  $\beta_{k,k'}\in H_1(L\Le_{p,q}\x L\Le_{p,q})$ is non-trivial. 
      
Note now that $\beta_{k,k'}$ has image in the component $(k\!\mod p,k'\!\mod p)$ of the space $L\Le_{p,q}\x L\Le_{p,q}$, as each loop  $[s\mapsto ((1,\frac{t+ks}{p}),0)]$ is homotopic to  $\la^{\star k}$.
Given that the classes are non-zero, $\beta_{k,k'}=\beta_{h,h'}$ thus  necessarily requires that $k=h\!\mod p$ and $k'=h'\!\mod p$. 
The converse follows from the fact that any homotopy $\la^{\star p}\simeq *$ extends continuously over such a family of loops, using the residual torus action to push it along $\la$, proving that $\beta_{k,k'}=\beta_{k+np,k'+mp}$ in homology for any $n,m\in \NN$, which proves (1).

Finally,
%for the non-triviality statements, the class $\be_{k,k'}\in H_1(L\Le_{p,q}\x L\Le_{p,q})$
% is always non-trivial, because its evaluation at 0  $\ev_0\circ \beta_{k,k'}=\la$ is a generator of $H_1(\Le_{p,q})\cong \Z/7$. It is thus
by the above, $\beta_{k,k'}$ is 
 non-zero in relative homology precisely when $k$ and $k'$ are not equal to $0$ mod $p$, as $\beta_{0,k'}$ and $\beta_{k,0}$ being trivial in relative homology.  And the classes are linearly independent as they live in different components. 
\end{proof}

We are now ready to compute the coproduct of $\rho$--classes, where we will assume that $\ell$ and $m$ are positive for simplicity.

\begin{proposition}\label{prop:vrho}
  The coproduct of the class $\rho_{\ell,m}\in H_3(L\Le_{p,q},\Le_{p,q})$ with $\ell,m\ge 0$  is given by the formula
  $$\vee\rho_{\ell,m} =\sum_{\substack{0<t<\ell \\ t,
      (\ell-t)\neq 0\!\!\!\mod p}}\beta_{t,\ell-t} \ \ + \ \ q'\!\!\!\!\!\! \sum_{\substack{0<t<q\ell+pm \\ t,(\ell-tq')\neq 0\!\!\!\mod p}}\beta_{tq',\ell-tq'}$$
where $q'$ is the multiplicative inverse of $q$ mod $p$. 
    \end{proposition}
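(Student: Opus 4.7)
The plan is to apply Proposition~\ref{prop:compute}(2) to a perturbed representative of $\rho_{\ell,m}$. The torus-equivariant family $\{\gamma^{\ell,m}_{\un r,\un\theta}\}$ is not transverse: the basepoint self-intersection condition $\gamma(s)=\gamma(0)$ in $\Le_{p,q}$ decomposes into a $\theta_1$-compatibility condition forcing $\ell s\in\Z$ and a $\theta_2$-compatibility condition forcing $\ell\mid m\ell s$. Along the singular circles of the torus action one of these two conditions becomes vacuous: on $\lambda=\{r_2=0\}$ one obtains $\ell-1$ self-intersection times $s=k/\ell$ (for $k=1,\ldots,\ell-1$), and on $\lambda'=\{r_1=0\}$ one obtains $q\ell+pm-1$ self-intersection times $s=T/(q\ell+pm)$ (for $T=1,\ldots,q\ell+pm-1$); away from $\lambda\cup\lambda'$ one finds additional 3-dimensional ``generic'' self-intersection loci $\Le_{p,q}\times\{t/\ell\}$ at each $t\in(0,\ell)$ with $\ell\mid mt$.

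I would then construct a small perturbation of $\rho_{\ell,m}$ supported away from $\lambda\cup\lambda'$, breaking the torus symmetry so that (i) the 3-dimensional generic self-intersection loci contribute trivially, and (ii) the 1-dimensional self-intersection loci along $\lambda\cup\lambda'$ become transverse arcs. Applying Proposition~\ref{prop:compute}(2) to this representative identifies $\vee\rho_{\ell,m}$ with the cut image of the perturbed self-intersection locus. Along $\lambda$, the loop $\gamma^{\ell,m}_{\lambda(\theta_1)}$ equals, up to reparametrization, $\lambda^{\star\ell}$; cutting at $s=k/\ell$ with $\theta_1$ varying over $\lambda$ produces the family of figure eights $\lambda^{\star k}\star\lambda^{\star(\ell-k)}$, which is by definition the class $\beta_{k,\ell-k}$. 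Summing over $k=1,\ldots,\ell-1$ and discarding the classes that vanish in relative homology (those with $k$ or $\ell-k$ equal to $0$ mod $p$, by Lemma~\ref{lem:beta}(2)) yields the first sum of the stated formula. Along $\lambda'$, the analogous computation using $\gamma^{\ell,m}_{\lambda'(\theta_2)}=\lambda'^{\star(q\ell+pm)}$ and the primed figure eight families produces a contribution $\sum_{t=1}^{q\ell+pm-1}\beta'_{t,q\ell+pm-t}$, again modulo the classes that vanish in relative homology.

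To rewrite the $\lambda'$-contribution in $\beta$-notation, I would apply Lemma~\ref{lem:beta'}. Combining the identity $\beta_{k,k'}=q\,\beta'_{qk,qk'}$ with Lemma~\ref{lem:beta}(1) (mod-$p$ identification of indices) and the fact that $p\,\beta'_{*,*}=0$ in relative homology, one deduces $\beta'_{t,q\ell-t}=q'\,\beta_{tq',\ell-tq'}$ in relative $H_1$. Substituting this into the $\lambda'$-contribution, and noting that $q\ell-t\equiv 0\pmod p$ is equivalent to $\ell-tq'\equiv 0\pmod p$, recovers the second sum of the stated formula.

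The main obstacle is step (i) of the perturbation argument, namely showing that the 3-dimensional generic self-intersection loci contribute trivially. This is a subtle excess-intersection calculation involving a 2-dimensional excess normal bundle, and amounts to showing that the resulting contribution is either zero or a relative boundary. Alternatively, one can construct an explicit transverse representative via a family of diffeomorphisms of $\Le_{p,q}$ supported on $\{r_1,r_2>0\}$ that breaks the torus symmetry and pushes the generic self-intersection loci off the diagonal, without affecting the structure near $\lambda\cup\lambda'$.
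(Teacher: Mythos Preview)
Your outline matches the paper's proof, and your ``alternative'' at the end is precisely what the paper does. The obstacle you flag as subtle is not: the paper perturbs by deforming only the radial part of each loop, setting $\tilde r_1(t)$ to interpolate between $r_1$ at $t\in\{0,1\}$ and $r_1^2$ at $t=\tfrac12$ (with $\tilde r_2(t)=\sqrt{1-\tilde r_1(t)^2}$). Since $(\tilde r_1(t),\tilde r_2(t))=(r_1,r_2)$ only when $t\in\{0,1\}$ or one of the $r_i$ vanishes, every self-intersection away from $\lambda\cup\lambda'$ disappears outright --- there is no excess-intersection computation to do. The self-intersection locus of the perturbed family is then exactly the finite union of circles $\lambda\times I_1\cup\lambda'\times I_2$ you describe (circles, not arcs), and your identification of the cut images as $\beta_{t,\ell-t}$ and $\beta'_{t,q\ell+pm-t}$, followed by conversion via Lemmas~\ref{lem:beta'} and~\ref{lem:beta}, is correct.

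Two steps you omit that the paper carries out: a transversality check in local coordinates $(z,\theta,t)\in\mathbb{C}\times\R^2$ near each of these circles, which is short but necessary for Proposition~\ref{prop:compute}(2) to apply; and a verification that the induced orientation on $\Sigma_\Delta$ is independent of the intersection time and of $(\ell,m)$, so that all terms enter with the same sign. (A minor slip: your $\theta_2$-condition ``$\ell\mid m\ell s$'' is garbled; for generic points the two conditions are $\ell s\in\Z$ and $ms\in\Z$.)
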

%\Fnote{The formula is only true for $l,m$ non-negative (or at least otherwise one has to read it carefully)}

    Using the previous lemma, one deduces that the coproduct of $\rho$--classes is non-trivial most of the time.
    %\Nnote{quick computation one could do: is it the case that the coproduct is trivial iff the classes are represented by simple loops?}

\begin{proof}

  To compute the coproduct $\vee \rho_{\ell,m}$ by geometric intersection applying Proposition~\ref{prop:compute}, we need the map
$$\Le_{p,q}\x (0,1)\xrightarrow{\rho_{\ell,m}\x \id} L\Le_{p,q}\x (0,1) \xrightarrow{\ e_I\ } \Le_{p,q}\x \Le_{p.q},$$
where $e_I$ evaluates the loops at 0 and $s\in (0,1)\subset I$, to be transverse to the diagonal embedding $\De: \Le_{p,q}\to \Le_{p,q}\x \Le_{p.q}$ after removing the locus of constant loops.
% Indeed, if we include the constant loops and the ``trivial times'' $s=0$ or $1$, such maps are essentially never transverse to the diagonal, but Proposition 3.7 of \cite{HinWah0} states that it is enough to have transversality away these ``singular'' loci.
In the present case, either  $(\ell,m)= (0,0)$ in which case all loops are constant, with $\rho_{(0,0)}=0$ in homology relative to the constant loops, or  $(\ell,m)\neq (0,0)$ and the cycle has no constant loop in its image. So we can assume $(\ell,m)\neq (0,0)$ and work with the parametrizing pair $(\Si,\Si_0)=(\Le_{p,q},\emptyset)$ for our relative cycle.

To achieve transversality, we represent the homology class of  $\rho_{\ell,m}$ by the homotopic family $\tilde\rho_{\ell,m}:\Le_{p,q}\to L\Le_{p,q}$ defined by $\tilde\rho_{\ell,m}(\un r,\un \theta)=\tilde\ga^{\ell,m}_{\un r,\un \theta}$ for
   $\tilde\ga^{\ell,m}_{\un r,\un \theta}:S^1\to \Le_{p,q}$ the loop based at  $(\un r,\un \theta)$ given by
   $$\tilde\gamma^{\ell,m}_{\un r,\un \theta}(t)= \big((\tilde r_1(t),\theta_1+\frac{\ell t}{p}),(\tilde r_2(t),\theta_2+\frac{q\ell +pm}{p}t)\big),$$
   where $(\tilde r_1(t),\tilde r_2(t))$ is a deformation of $(r_1,r_2)$ with   $(\tilde r_1(t),\tilde r_2(t))=(r_1,r_2)$ only when $r_1$ or $r_2=0$, or when $t=0$ or $1$.
   % This possible as $(r_1,r_2)$ are positive numbers satisfying that $r_1^2+r_2^2=1$, and hence live in an arc in the circle, which we can indentify with $I$.
   Such a deformation can be obtained by e.g., interpolating back and forth between the identity on $r_1$ at times $t=0$ and $1$  and $r_1^2$ at $t=\frac{1}{2}$, with $\tilde r_2(t)=\sqrt{1-\tilde r_1(t)^2}$.
 %  Note that the chosen deformation of the radii is maximal in the sense that loops $\ga$ are deformed to loops $\tilde \ga$ as the deformation is trivial at times $t=0,1$, and because there is no continuous deformation of $(\un r,\un \theta)$ over $\Le_{p,q}$ taking a point with $r_i=0$ to some $(\un{\tilde r},\un{ \tilde \theta})$ with $\tilde r_i>0$ as such a deformation requires choosing an accompanying $\theta_i$, which is undefined when $r_i=0$. 

   The map $e_I\circ (\tilde\rho_{\ell,m}\x \id)|_{\Le_{p,q}\x (0,1)}$ intersects the diagonal whenever a loop $\tilde\ga_{\un r,\un \theta}^{\ell,m}$ has a self-intersection $\tilde\ga_{\un r,\un \theta}^{\ell,m}(0)=\tilde\ga_{\un r,\un \theta}^{\ell,m}(t)$ for some $t\in (0,1)$.
   Such self-intersections can only happen when $r_1=0$ or $r_2=0$, as otherwise $\tilde r_i(t)\neq r_i(0)$, and when 
   $$\left\{\begin{array}{ll} 0<t=\frac{a}{\ell}<1 & \mathrm{if}\ r_2=0\\
                   0<t=\frac{b}{q\ell+pm}<1 & \mathrm{if}\ r_1=0
                   \end{array}\right.$$
                 for some $a,b\in \NN$. That is the locus of self-intersections of $\tilde\rho_{\ell,m}\x \id|_{\Le_{p,q}\x (0,1)}$ is
$$\Sigma_{\De}=\la\x I_1\cup \la'\x I_2\subset \Le_{p,q}\x (0,1)$$
for $I_1=\{\frac{1}{\ell},\dots,\frac{\ell-1}{\ell}\}$ and $I_2=\{\frac{1}{q\ell+pm},\dots,\frac{q\ell+pm-1}{q\ell+pm}\}$, 
 and $\la$, $\la'$  the loops  parametrizing the points with $r_2=0$ and $r_1=0$ respectively, as above. 
%Note that each of these intersections come in 1-parameter families, parametrized by the loop $\la'$ defined above if $r_1=0$, with $r_2=1$ and $\theta_2\in S^1$ the only free variable, and by the loop $\la$ if $r_2=0$, with $\theta_1$ the only variable. 

 We need to check that these self-intersections are transverse to the diagonal. This can be checked in local coordinates $(r_1,\theta_1,\theta_2,t)=(z,\theta_2,t)\in \mathbb{C}\x \mathbb{R}^2$ around points with $r_1=0$, and similarly with coordinates $(\theta_1,r_2,\theta_2,t)=(\theta_1,z,t)$ when $r_2=0$. In those coordinates, the function $e_I\circ (\tilde\rho_{\ell,m}\x \id)$ has the form
\begin{align*}
(z,\theta,t) \mapsto &((z,\theta), (e^{2\pi i\alpha t}r(t) z, \theta + \beta t)) %\\
%&= (z, \theta, e^{\alpha t} z r(t), \theta + \beta t),
\end{align*}
where $r(t)$ is a function so that $r(t) = 1$ only for $t = 0,1$. It is transversal to all the diagonals
\[
\Delta_k = (z,\theta, e^{2\pi i \frac{k}{p}}z, \theta + \frac{k q}{p}) \ \ \ \textrm{resp.} \ \ \ (z,\theta, e^{2\pi i \frac{k}{p}}z, \theta + \frac{k q'}{p}) 
\]
because the zeros of the functions
%\begin{align*}
$f_k (z, \theta, t) = ((e^{2\pi i\alpha t}  r(t)-e^{2\pi i \frac{k}{p}}) z, \beta t- \frac{kq^{(')}}{p})$
%\end{align*}
are transversal. Indeed, away from $t=0,1$ the factor $(e^{2\pi i\alpha t}  r(t)-e^{2\pi i\frac{k}{p}})$ is never zero,
so, up to translation, $f_k$ has the form  $f_k(z,\theta,t)=(a(t)z,\beta t)$ for $0\neq a(t)\in \mathbb{C}$ and $\beta>0$, either equal to $\frac{\ell}{p}$ or to $\frac{q\ell+pm}{p}$. 
%so the derivatives
%\[
%\partial_z f_k = (e^{2\pi i \frac{k}{p}} - e^{2 \pi i\alpha t}  r(t), 0) 
%\ \ \ \textrm{and} \ \ \ 
%\partial_t f_k = (\partial_t(e^{2\pi i \alpha t}  r(t)z), -\beta)
%\]
%span all of $\R^3$.

Applying  Proposition~\ref{prop:compute}, it now follows that the coproduct
%$\vee\tilde\rho_{\ell,m}$ by direct intersection of $\rho\x \id_{(0,1)}$ with the diagonal in $\Le_{p,q}\x \Le_{p,q}$ in the following sense: for $(\ell,m)\neq (0,0)$, the coproduct in homology is the degree $1=3+1-3$ homology class
 $$\vee \tilde\rho_{\ell,m}=[\operatorname{cut}\circ (\tilde \rho_{\ell,m}\x I)|_{\overline{\Sigma_{\De}}}]$$  %\in H_1(L\Le_{p,q}\x L\Le_{p,q},\Le_{p,q}\x L\Le_{p,q} \cup L\Le_{p,q}\x \Le_{p,q})$$    
 where $\overline{\Sigma_{\Delta}}$ is the closure inside $\Le_{p,q}\x I$ of $\Sigma_{\De}$, with $\Si_\De$  oriented so that the isomorphism
 $$T_{(\un r,\un \theta,t)}(\Le_{p,q}\x I)\cong N\De\Le_{p,q}\oplus T_{(\un r,\un \theta,t)}\Si_{\De},$$
 coming from transversality, is orientation preserving.\footnote{In our conventions,  $N\De M$ is oriented so that $\tau_M\cap [M\x M]=[M]$, for $\tau_M$ the corresponding Thom class.}
Our computation above shows that $\overline{\Sigma_{\Delta}}=\Sigma_\De$ is the disjoint union of circles $\la\x I_1\cup \la'\x I_2\subset \Le_{p,q}\x (0,1)$. Given that the sign depends on choices and conventions, we only give here the important part of the sign computation for us, namely that it is independent of $t\in I_1\cup I_2$, and independent of $\ell,m$. 

Orient $T_{(\un r,\un \theta,t)}(\Le_{p,q}\x I)$ around $r_1=0$ as
 $\R^4\lgl r_1,\theta_1,\theta_2,t \rgl$. Then we have $T_{(\un r,\un \theta,t)}(\Le_{p,q}\x I)\cong -\R^3\lgl r_1,\theta_1,t \rgl\oplus T\Si_\De\lgl \theta_2\rgl$ at the intersections with $r_1=0$.
 Around $r_2=0$, we then have  $T_{(\un r,\un \theta,t)}(\Le_{p,q}\x I)\cong\R^4\lgl r_2,\theta_2,\theta_1,t \rgl $ as $r_2=\sqrt{1-r_1^2}$ is orientation preserving, and hence likewise 
 $T_{(\un r,\un \theta,t)}(\Le_{p,q}\x I)\cong -\R^3\lgl r_2,\theta_2,t \rgl\oplus T\Si_\De\lgl \theta_1\rgl$.
 And in local coordinate $(z,\theta,t)$, the map considered has the form $(z,\theta,t)\mapsto ((z,\theta),(c(t)z,\theta+\beta))$, independently of the point of $\Si_\De$. 

 Finally, we have that
 $$\operatorname{cut}\circ (\tilde \rho_{\ell,m}\x I)|_{\la\x I_1\cup \la'\x I_2}= \big(\sum_{t=1}^{l-1} \beta_{t,\ell-t} +  \sum_{t=1}^{q\ell+pm-1}\beta'_{t,q\ell+pm-t)}\big)$$
 as a family of pairs of loops. The result thus follows from Lemmas~\ref{lem:beta'} and \ref{lem:beta}.
 %The sign is determined by the orientation of the intersection, following the convention given in \cite[Prop 3.7]{HinWah0}. 
\end{proof}

\subsection{Homotopy invariance}\label{sec:invariance0}

A diffeomorphism $f: M\xrightarrow{\cong} N$ induces an isomorphism $Lf_*: H_*(LM)\xrightarrow{\cong} H_*(LN)$, and likewise for relative homology, that preserves both the loop product and coproduct, as all their defining ingredients are identified by diffeomorphisms.
It is natural to ask whether only assuming that $f$ is a homotopy equivalence could be enough for the induced isomorphism $Lf_*$ to preserve the loop product and coproduct.
Note that if $f$ satisfies the even weaker assumption of being a degree 1 map, then $f_*:H_*(M)\to H_*(N)$ already preserves the intersection product, see eg.~\cite[VI, Proposition 14.2]{Bredon}.

The following two results show that the answer to the above question is yes for the product, and no for the coproduct. 

\begin{theorem}\cite{CKS}(see also \cite{Cra,GruSal,Felix-Thomas})\label{thm:producthtpy}
Let $f:M\to N$ be a degree 1 homotopy equivalence between two closed oriented manifolds. Then $Lf_*:H_*(LM)\to H_*(LN)$ is an isomorphism of algebras with respect to the Chas-Sullivan product. 
  \end{theorem}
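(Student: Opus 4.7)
The plan is to reduce the homotopy invariance of the Chas-Sullivan product to homotopy invariance of the Thom-Pontrjagin data used in its construction, as set up in Definition~\ref{def:GeoPro}. The product is assembled from three steps: cartesian product $\x$ (natural under any continuous $f$), capping with the Thom cochain $\tau_M$ followed by the retraction $r\colon U_M\to M$ of the $\eps$-neighborhood of the diagonal (the ``intersection product'' on $M\x M$ lifted to $LM\x LM$), and concatenation at the figure-eight locus $\Figeight$ (manifestly natural under $Lf$). The entire issue thus reduces to showing that the intersection-product step commutes, up to chain homotopy, with $f\x f$ and hence with $Lf\x Lf$.

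First I would pass to a homotopy-invariant formulation of the intersection data. The Thom class $\tau_M$ represents the Poincar\'e dual of $[\De M]\in H_n(M\x M)$, and $r$ is a deformation retraction of a tubular neighborhood of the diagonal. A degree $1$ homotopy equivalence $f\colon M\arsim N$ preserves the fundamental class, and the Spivak normal fibration (the homotopy-invariant substitute for the stable tangent bundle available on a Poincar\'e duality space) is preserved as well. Consequently $(f\x f)^*\tau_N$ and $\tau_M$ differ by a coboundary in the relevant relative cochain complex, and the respective tubular-neighborhood retractions can be identified up to homotopy.

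At the loop space level I would appeal to the Cohen-Jones Thom-spectrum description: the Chas-Sullivan product is induced by a ring-spectrum structure on $LM^{-TM}$. By Atiyah duality $M^{-TM}\simeq D(M_+)$, the Spanier-Whitehead dual, which is manifestly a homotopy invariant of $M$, and a degree $1$ homotopy equivalence induces a ring-spectrum map $LM^{-TM}\rar LN^{-TN}$ lifting $Lf$. Taking homology and applying the Thom isomorphism, one deduces that $Lf_*\colon H_*(LM)\rar H_*(LN)$ is multiplicative. Equivalently, one can work directly in the intersection-context framework of Section~\ref{sec:42} (the approach signaled by Theorem~\ref{thm:invEE}), showing that the pulled-back data $(\ev_0\x \ev_0)^*(U_M,r,\tau_M)$ is invariant under $f$ up to coherent chain homotopy, and that the naturally defined concatenation and evaluation maps intertwine this homotopy.

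The main obstacle is packaging the individual homotopies --- of Thom class, of tubular neighborhood, and of retraction --- into a single coherent chain homotopy that commutes with concatenation. Neither the Thom cochain nor the retraction is literally natural under $f$; each is natural only up to a choice of homotopy, and these homotopies must be assembled consistently through the successive chain maps in Definition~\ref{def:GeoPro}. This is precisely where the intersection-context formalism, or equivalently the spectrum-level Atiyah-duality argument, does the real work. Once that coherence is established, the remaining naturalities (of $\x$, of concatenation, and of $\ev_0$) are purely formal, and the fact that $Lf_*$ respects the loop product follows.
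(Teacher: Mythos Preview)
Your proposal correctly isolates the crux of the problem: the cross product and concatenation steps are natural under any map, so everything reduces to the intersection step $[\tau_M\cap]$. That diagnosis matches the paper exactly. The difficulty is that your proposal then gestures at several possible arguments without carrying any of them through, and at least one of the moves you make is the very thing that needs proving.

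Two specific issues. First, the sentence ``$(f\times f)^*\tau_N$ and $\tau_M$ differ by a coboundary in the relevant relative cochain complex'' is not well-posed: these classes live in $C^n(M\times M, M\times M\setminus \Delta_M)$ and in the pullback along $f\times f$ of $C^n(N\times N, N\times N\setminus \Delta_N)$, and $f\times f$ does \emph{not} carry the complement of $\Delta_M$ into the complement of $\Delta_N$ (a homotopy equivalence need not be injective). So there is no direct comparison map between the two relative cochain complexes, and this is precisely where the real work lies. Second, the assertion that ``a degree $1$ homotopy equivalence induces a ring-spectrum map $LM^{-TM}\to LN^{-TN}$ lifting $Lf$'' is essentially the statement of the theorem at the spectrum level; asserting it without construction is circular. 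The stable identification $M^{-TM}\simeq D(M_+)$ is indeed homotopy invariant, but lifting this to a \emph{ring} map at the loop-space level compatible with $Lf$ is exactly what \cite{CohJon,GruSal} have to build by hand.

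The paper's route, sketched in Theorem~\ref{thm:invEE}, is more concrete. Rather than trying to compare Thom data directly, it works in the intersection-context formalism and follows Gruher--Salvatore: one constructs a specific transfer-type map $\theta_f\colon (\EE,\intp_N)\to (f^*\EE,\intp_M)$ (their Theorem~8), composes it with the naturality map of Lemma~\ref{lem:fint}, and then checks via a Thom-isomorphism computation on the base manifolds (their Proposition~23) that the composite is the identity. This sidesteps the problem of comparing tubular neighborhoods along $f$ by instead producing an explicit intertwiner and verifying one equation. Your ``coherence of homotopies'' paragraph is pointing at the right obstacle, but the paper's (and Gruher--Salvatore's) resolution is not to assemble those homotopies directly; it is to build $\theta_f$ and reduce everything to a single identity check.
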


The main ingredient of the proof of this theorem is sketched in Section~\ref{sec:invariance} (see Theorem~\ref{thm:invEE}), where we will revisit the question of invariance of the loop product and coproduct after going through a deeper analysis of their defining ingredients. 

In the meanwhile, as noted by the first author in \cite{Nae21}, the computations presented in Section~\ref{sec:computations} can already be used to show that the loop coproduct is not homotopy invariant:
%\Fnote{I think the theorem is true like that, but the proof only shows it for the homotopy equivalence that sends the preferred generator of $\pi_1$ to the square. I think there is also one that sends it to it's fifth power?}\Nnote{you are right. i was reading Mnev (Lem 6.9) wrongle. 2 and 5 are possible.}
\begin{theorem}\cite{Nae21}\label{thm:inv} Let $f: \Le_{7,1}\to \Le_{7,2}$ be a homotopy equivalence and $\rho_{1,0}\in H_3(L\Le_{7,1})$ be as in Section~\ref{sec:computations}. Then
  $$0=f(\vee(\rho_{1,0}))\neq \vee(f(\rho_{1,0}))\in H_1(L\Le_{7,2}\x L\Le_{7,2},\Le_{7,2}\x L\Le_{7,2} \cup L\Le_{7,2}\x \Le_{7,2}).$$ 
  In particular, the loop coproduct $\vee$ is not preserved by $f$. 
\end{theorem}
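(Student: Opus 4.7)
The first equality $f(\vee\rho_{1,0})=0$ is immediate from Proposition~\ref{prop:vrho} applied to $\Le_{7,1}$ with $(p,q,\ell,m)=(7,1,1,0)$: both summation ranges $\{t:0<t<\ell=1\}$ and $\{t:0<t<q\ell+pm=1\}$ are empty, so $\vee\rho_{1,0}$ already vanishes in $H_1(L\Le_{7,1}\times L\Le_{7,1},\,\Le_{7,1}\times L\Le_{7,1}\cup L\Le_{7,1}\times \Le_{7,1})$, and its image under $(Lf\times Lf)_*$ is therefore zero.

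For the inequality $\vee(f_*\rho_{1,0})\neq 0$, my plan is to pin down $f_*\rho_{1,0}\in H_3(L\Le_{7,2})$ enough to apply Proposition~\ref{prop:vrho} once more. First, naturality of $\ev_0$ with respect to $Lf$ together with $f$ being of degree $\pm 1$ yields $\ev_0(f_*\rho_{1,0})=\pm[\Le_{7,2}]$, while the induced $\pi_1$--isomorphism maps the generator of $\pi_1(\Le_{7,1})$ to $[\la]^a\in\pi_1(\Le_{7,2})=\Z/7$ for some $a\in(\Z/7)^\times$. Thus $f_*\rho_{1,0}$ lies in the path component of $L\Le_{7,2}$ labelled by $[\la]^a$. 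The Serre spectral sequence of $\Omega\Le_{7,2}\to L\Le_{7,2}\to\Le_{7,2}$, combined with $H_*(\Omega\Le_{7,2})=H_*(\Omega S^3)$ (using $\pi_2(\Le_{7,2})=0$ and $\pi_3(\Le_{7,2})=\Z$), shows that $H_3$ of each non-trivial component is an extension of $\Z$ by a quotient of $H_1(\Le_{7,2})=\Z/7$, with $\Z$--generator represented by any $\rho_{a,m}$ in that component. I conclude
\[
f_*\rho_{1,0}=\pm\rho_{a,m}+\tau
\]
for some $m\in\Z$, which after shifting by a multiple of $7$ one may take non-negative, and some $7$--torsion class $\tau$.

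Applying Proposition~\ref{prop:vrho} to $\rho_{a,m}$ in $\Le_{7,2}$ with $(p,q,q')=(7,2,4)$, a case-by-case inspection over $a\in\{1,\dots,6\}$ shows that $\vee\rho_{a,m}$ is non-zero modulo $7$: when $a\not\equiv 4\pmod 7$ the $t=1$ term of the second sum already contributes $4\beta_{4a\bmod 7,\,(a-4)\bmod 7}$, and when $a=4$ the first sum supplies $\beta_{1,3}+\beta_{2,2}+\beta_{3,1}$. By Lemma~\ref{lem:beta}(2) the classes $\beta_{k,k'}$ with $k,k'\in\{1,\dots,6\}$ span independent $\Z/7$--summands in the relative target, and the coefficients produced above are units mod $7$.

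The main obstacle will be ruling out cancellation by the torsion correction $\vee\tau$. The plan is to work modulo $7$ throughout, using that $\tau$ sits in the spectral-sequence filtration level coming from $H_1(\Le_{7,2};H_2(\Omega\Le_{7,2}))\cong\Z/7$, and verifying that $\vee\tau$ lands in a set of $\beta_{k,k'}$--summands disjoint from those that detect $\vee\rho_{a,m}$. Combined with the computation above, this will give $\vee(f_*\rho_{1,0})\neq 0$, contradicting preservation of the coproduct by $f$.
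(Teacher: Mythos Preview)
Your first step, showing $\vee\rho_{1,0}=0$ from Proposition~\ref{prop:vrho}, is fine and matches the paper. The difficulty is entirely in the second half, and there your plan has a genuine gap.

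You write $f_*\rho_{1,0}=\pm\rho_{a,m}+\tau$ with $\tau$ a $7$--torsion class, and then propose to argue that $\vee\tau$ lands in $\beta_{k,k'}$--summands \emph{disjoint} from those detecting $\vee\rho_{a,m}$. This disjointness claim is false. The torsion in $H_3(L_a\Le_{7,2})$ is generated by $\rho_{a,1}-\rho_{a,0}$ (both classes evaluate to the fundamental class, and together they generate $H_3(L_a\Le_{7,2})\cong\Z\oplus\Z/7$; this is how the paper sets things up). Applying Proposition~\ref{prop:vrho} for $a=2$ gives $\vee\rho_{2,0}=5\beta_{1,1}+4(\beta_{4,5}+\beta_{5,4})$ and $\vee\rho_{2,1}=2\beta_{1,1}+\beta_{4,5}+\beta_{5,4}+4(\beta_{3,6}+\beta_{6,3})$, so $\vee(\rho_{2,1}-\rho_{2,0})$ hits $\beta_{1,1}$, $\beta_{4,5}$, $\beta_{5,4}$ with nonzero coefficients---exactly the summands you were hoping to use to detect $\vee\rho_{2,m}$. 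There is no filtration argument that separates them: the spectral sequence only tells you $\tau$ exists, not that $\vee\tau$ avoids any particular $\beta$--classes.

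The paper's approach closes this gap by avoiding the abstract $\tau$ altogether. Since $\rho_{\ell,0}$ and $\rho_{\ell,1}$ generate $H_3(L_\ell\Le_{7,2})$ and both hit the fundamental class under $\ev_0$, one has $f_*\rho_{1,0}=a\rho_{\ell,0}+(1-a)\rho_{\ell,1}$ for some $a\in\Z/7$, where $\ell\in\{2,5\}$ is forced by the condition $1\equiv 2\ell^2\pmod 7$ on $\pi_1$ (your range $a\in\{1,\dots,6\}$ is too large). One then computes $\vee(a\rho_{\ell,0}+(1-a)\rho_{\ell,1})$ explicitly from the two formulas above and checks, using Lemma~\ref{lem:beta}(2), that no value of $a$ makes every $\beta$--coefficient divisible by $7$. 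This is a finite check and replaces your unjustified disjointness step.
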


The lens spaces $\Le_{7,1}$ and $\Le_{7,2}$ are the simplest examples of lens spaces that are homotopy equivalent, but not simple homotopy equivalent.
They were also used in \cite{LonSal} to prove that the configuration space of two points in a manifold is not a homotopy invariant of the manifold. In Section~\ref{sec:42}, we will see that the same  configuration of two points plays an important role in the definition of the loop coproduct.

\begin{proof}
  The class $\rho_{1,0}\in H_3(L\Le_{7,1})$ has trivial coproduct by Proposition~\ref{prop:vrho} as $\ell=q=1$ and $m=0$. (This also follows, using \cite[Theorem 3.10]{HinWah0}, from the fact that $\rho_{1,0}$ is a family of simple loops whenever $q=1$.).

  We need to compute the coproduct of the image $f(\rho_{1,0})$. The free loop space $L\Le_{7,q}$ has 7 components, and each component $L_{\ell}\Le_{7,q}$ has $H_3(L_{\ell}\Le_{7,q})\cong \Z\oplus \Z/7$ (see \cite[Sec 2.1]{Nae21}).  From Lemma~\ref{lem:beta}  and Proposition~\ref{prop:vrho}, one can deduce that e.g. the classes $\rho_{\ell,0}$ and $\rho_{\ell,1}$ generate $H_3(L_{\ell}\Le_{p,q})$. 
 Now \cite[Lemma 6.9]{Mne14} tells us that, because $f$ is a homotopy equivalence,  $\rho_{1,0}$ has image in $L_{\ell}\Le_{7,2}$ for $\ell=2$ or $5$. 
 %for any $m\neq m' \mod 7$ coprime to $\ell$. \Nnote{check that coprime to $\ell$ is needed?}
  Hence  $f(\rho_{0,1})=a\rho_{\ell,0}+(1-a)\rho_{\ell,1}$ for some $a \in \{0, \dots, p-1\}$, with $\ell=2$ or $5$. Now 
  Proposition~\ref{prop:vrho} for $\ell=2$ shows that $\vee\rho_{2,0}=5\beta_{1,1}+4(\beta_{4,5}+\beta_{5,4})$
  while $\vee\rho_{2,1}=2\beta_{1,1}+\beta_{4,5}+\beta_{5,4} + 4(\beta_{3,6}+\beta_{6,3})$. And one checks readily that there is no such $a$ such that $\vee(a\rho_{\ell,0}+(1-a)\rho_{\ell,1})=0$.
  A similar computation rules out the possibility in the case $\ell=5$. 
  % But $f(\rho_{0,1})=\rho_{2,3}\in H_3(L\Le_{7,2})$ (see \cite[Lem 9]{Nae21} for a specific choice of $f$...) and $\vee\rho_{2,3}=... \neq 0$ by Lemma~\ref{lem:beta}. 
\end{proof}

Combining the invariance of the corresponding (co)product in algebra (see Theorem~\ref{thm2}), with the fact that the algebraic model indeed models the loop coproduct  (see Theorem~\ref{thm:alggeo}), it follows that, when working over real coefficients and with simply connected manifolds, the coproduct is homotopy invariant, as stated in Theorem~\ref{thmE}. 
On the other hand, the above  computation can be extended to show the following:  %would allow to determine whether the loop coproduct detects whether lens spaces are homeomorphic. 

\begin{theorem}\label{thm:lens}
  A degree 1 homotopy equivalence $f:\Le_{p,q_1}\to \Le_{p,q_2}$ between two 3-dimensional spaces such that $Lf_*:H_*(L\Le_{p,q_1},\Le_{p,q_1})\to H_*(L\Le_{p,q_2},\Le_{p,q_2})$ preserves the loop coproduct of degree 3 classes is homotopic to a homeomorphism. 
  %necessarily a homeomorphism. \Mnote{change to ``$f$ is homotopic to a homeomorphism'' or to ``lens spaces are homeomorphic", cite Gadgil. even if it is not directly related, it has the same flavor.}
  \end{theorem}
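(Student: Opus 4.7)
The strategy is to promote the computation behind Theorem~\ref{thm:inv} into a general obstruction: I will show that preservation of $\vee$ on the classes $\rho_{\ell,m}\in H_3(L\Le_{p,q_1},\Le_{p,q_1})$ forces a Reidemeister-type congruence $q_1\equiv\pm q_2^{\pm1}\bmod p$. By the classical Reidemeister classification of $3$-dimensional lens spaces, this is precisely the condition that $\Le_{p,q_1}$ and $\Le_{p,q_2}$ be homeomorphic. Since homotopy self-equivalences of a lens space are determined up to homotopy by their action on $\pi_1$ and their degree, composing $f$ with such a homeomorphism reduces the claim to the self-equivalence case, yielding that $f$ itself is homotopic to a homeomorphism.

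The setup mirrors Theorem~\ref{thm:inv}. By \cite[Lemma 6.9]{Mne14}, $f$ acts on $\pi_1\cong\Z/p$ as multiplication by some unit $c\in(\Z/p)^*$, sending the component $L_\ell\Le_{p,q_1}$ to $L_{c\ell}\Le_{p,q_2}$. The degree-$1$ hypothesis together with $(\ev_0)_*\rho_{\ell,m}=[\Le_{p,q_i}]$ pins down the free $\Z$-summand of $f_*\rho_{\ell,m}\in H_3(L_{c\ell}\Le_{p,q_2})\cong\Z\oplus\Z/p$, leaving only a torsion coefficient $r=r(\ell,m)\in\Z/p$, so that $f_*\rho_{\ell,m}=\rho_{c\ell,0}+r\,(\rho_{c\ell,1}-\rho_{c\ell,0})$ (up to a sign fixed by orientation). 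Expanding both $\vee\rho_{\ell,m}$ in $L\Le_{p,q_1}$ and $\vee f_*\rho_{\ell,m}$ in $L\Le_{p,q_2}$ via Proposition~\ref{prop:vrho} produces explicit $\Z/p$-linear combinations of $\beta$-classes. By the naturality of the construction of $\beta_{k,k'}$ together with Lemma~\ref{lem:beta}, $(f\times f)_*\beta_{k,k'}^{\Le_{p,q_1}}$ lies in the $(ck,ck')$-component and is a unit multiple of $\beta_{ck,ck'}^{\Le_{p,q_2}}$ in the corresponding relative $H_1$, with the unit constrained by degree.

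The hypothesis $f_*\vee\rho_{\ell,m}=\vee f_*\rho_{\ell,m}$ then translates, after matching components, into a system of congruences modulo $p$ relating $q_1$, $q_2$, $c$, and the torsion coefficients $r$. Restricting already to the generators $\rho_{1,0}$ and $\rho_{1,1}$ captures enough information: their coproducts are sums of classes $\beta_{tq_1',\,1-tq_1'}$ for $t$ running through intervals of length $q_1$ and $q_1+p$, to be matched against sums of $\beta_{tq_2',\,c-tq_2'}$ on the target, of lengths $cq_2$ and $cq_2+p$. Tracking the distribution of these indices modulo $p$ pins down the Reidemeister congruence between $q_1$ and $q_2$. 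The main obstacle is precisely this combinatorial bookkeeping: because $\beta_{k,k'}$ only depends on residues mod $p$, rearrangement and cancellation of terms in the two long sums must be handled systematically, and the torsion slack $r$ must be shown to be insufficient to account for mismatches, generalizing the case-by-case exclusion behind the $(p,q_1,q_2)=(7,1,2)$ analysis of Theorem~\ref{thm:inv}. Once the Reidemeister congruence is established, the classical classification gives a homeomorphism $h\colon \Le_{p,q_1}\to \Le_{p,q_2}$, and then the standard fact that self-homotopy-equivalences of a lens space are classified up to homotopy by their $\pi_1$-action and degree ensures that $h^{-1}\circ f$ is homotopic to the identity, which completes the proof.
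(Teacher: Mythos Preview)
Your overall strategy is exactly the one the paper follows: show that preservation of $\vee$ on a single degree-$3$ class forces the Reidemeister congruence $q_1\equiv\pm q_2^{\pm 1}\bmod p$, then invoke the classical classification. However, your write-up is a strategy rather than a proof: you explicitly flag ``the main obstacle is precisely this combinatorial bookkeeping'' and say the torsion slack ``must be shown to be insufficient'', without actually carrying this out. That bookkeeping is the entire content of the argument; everything else is setup.

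The paper shows this step is in fact quite short, and you do not need both $\rho_{1,0}$ and $\rho_{1,1}$. Using only $\rho_{1,0}$ (and writing $\ell$ for your $c$, so $q_1\equiv\ell^2q_2$), one assumes without loss of generality $q_2\neq 1$ and expands both sides in terms of the classes $\bar\beta_{k,\ell-k}$, which depend only on the residue $k\bmod p$. The difference $\vee(f_*\rho_{1,0})-f_*(\vee\rho_{1,0})$ then reduces, after peeling off a term of the form $(a+c)q_2'\sum_{0<t<p}[t]$ which is constant across all residues, to a comparison of three index sets
\[
A=\{t:0<t<\ell\},\qquad B=\{tq_2':0<t<d\},\qquad C=\{t\ell'q_2':0<t<q_1\},
\]
where $q_2\ell=cp+d$ with $0<d<p$. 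The vanishing condition forces the three sets to contribute with equal total coefficient at every residue, and a short case analysis on which of $A,B,C$ are empty or coincide yields the Reidemeister congruence (each non-homeomorphism case gives $q_2'=1$ or a direct contradiction). You should replace your paragraph about ``systematic bookkeeping'' with this concrete reduction; as stated, the proposal has the right shape but the hard step is missing.
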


  \begin{proof}
    Suppose $f$ is such a homotopy equivalence. Let $\rho_{1,0}\in H_3(L\Le_{p_1,q_1})$ be as above. We will just as above compare $f(\vee(\rho_{1,0}))$ with $\vee(f(\rho_{1,0}))$.

    The class $f(\rho_{1,0})$ lies in $H_3(L_{\ell}\Le_{p,q_1})$ for some $\ell$ satisfying $q_1=\ell^2q_2\mod p$, because $f$ is a degree 1 homotopy equivalence, with $f$ inducing multiplication by $\ell$ on $\pi_1$, see e.g.~\cite[Theorem 6.11]{Mne14}, where  $0<q_1,q_2,\ell<p$. 
We want to show that $f$ is homotopic to a homeomorphism. By \cite[Lemma 6.8]{Mne14}, it is enough to check that the 
  two lens spaces are homeomorphic, which happens precisely 
 %   \Mnote{change to ``The two lens spaces are homeomorphic", then refer to Lemma 6.8 of Mnev (or M. Cohen's book p.92 as referenced there) to conclude that $f$ is homotopic to a homeomorphism}
    if either $q_1q_2=\pm 1\mod p$ or $q_1=\pm q_2\mod p$, see \cite[Theorem 1.3]{Mne14}.  We may assume without loss of generality that $q_2\neq 1$. 

    To avoid confusion, denote by $\bar\rho_{\ell,0},\bar\rho_{\ell,1}\in H_3(L\Le_{p,q_2})$ the classes in the second lens space, and likewise for the $\beta$-classes. As argued above for $\Le_{7,2}$, we have that $\bar\rho_{\ell,0},\bar\rho_{\ell,1}$ generate $H_3(L_{\ell}\Le_{p,q_2})$, so we know that $f(\rho_{1,0})=(1-a)\bar\rho_{\ell,0}+a\bar\rho_{\ell,1}$ for some $a\in \{0,\dots,p-1\}$.

  The answer of the computations  $f(\vee(\rho_{1,0}))$ and $\vee(f(\rho_{1,0}))$ will be in terms of the classes $\bar\beta_{k,\ell-k}\in H_1(L\Le_{p,q_2}\x L\Le_{p,q_2})$ (or the corresponding relative homology group). As these classes only depends on the parameter $k$, we will denote them by $[k]$ below, for better readability. Note also that $f(\beta_{k,1-k})=\ell \bar\beta_{\ell k,\ell-\ell k}$ as $f$ is multiplication by $\ell$ on $\pi_1$. 

    From our computation above, we have that
    $$\vee \rho_{1,0}=\ \ q_1'\sum_{\substack{0<t<q_1}}\beta_{tq_1',1-tq_1'}$$
   so in the above notation,
    $$f(\vee \rho_{1,0})=\ \ \ell q_1'\sum_{\substack{0<t<q_1}}[t\ell q_1']=\ \  \ell' q_2'\sum_{\substack{0<t<q_1}}[t\ell' q_2']$$ 
using that $\ell q_1'=\ell'q_2$ for the second equality.  On the other hand, 
\begin{align*} \vee((1-a)\bar\rho_{\ell,0}+a\bar\rho_{\ell,1}) &=\sum_{\substack{0<t<\ell}}[t] \ \ + \ \  q_2'\!\!\!\sum_{\substack{0<t<q_2\ell \\ t,(\ell-tq_2')\neq 0\!\!\!\mod p}}[tq_2'] 
  + \ \ a q_2'\!\!\!\!\!\! \sum_{\substack{q_2\ell\le t<q_2\ell+p \\ t,(\ell-tq_2')\neq 0\!\!\!\mod p}}[tq_2'] \\
  &= \sum_{\substack{0<t<\ell}}[t] \ \ + \ \ q_2' \sum_{\substack{0<t<d}}[tq_2'] 
    + \ \ (a+c) q_2' \sum_{0<t<p}[t]%{\substack{q_2\ell\le t<q_2\ell+p \\ t,(\ell-tq_2')\neq 0\!\!\!\mod p}} [tq_2'] 
\end{align*}
where $q_2\ell=cp+d$, for some $0<d<p$. 

The equality $\vee(f(\rho_{1,0}))-f(\vee(\rho_{1,0}))=0$ holds precisely if all possible terms $[s]$ appear with coefficient a multiple of $p$. A necessary condition for that is that the terms $[s]$ all appear in 
$$ \sum_{\substack{0<t<\ell}}[t] \ \ + \ \  q_2' \sum_{\substack{0<t<d}}[tq_2'] \ \ -\ \  \ell' q_2'\sum_{\substack{0<t<q_1}}[t\ell' q_2']$$
with the same total coefficient.
Consider the sets 
\begin{align*}
    A &= \{ t \ | \ 0 < t < \ell \} \\
    B &= \{ tq_2' \ | \ 0 < t < d  \} \\
    C &= \{ t\ell'q_2' \ | \ 0 <t< q_1 \}.
\end{align*}

\noindent
{\em Case 1: $A=\emptyset$, or equivalently $\ell=1$.} Then $q_1=\ell^2 q_2=q_2\mod p$ and $f$ is a homeomorphism. 
%The only possibilities are then  $B=C$, in which case $q_2=d=q_1$, or $B=C^c$, requiring $q_1+q_2=p$. In both cases, $f$ is a homeomorphism. 
% and also $q_2=-q_2$

\noindent
{\em Case 2: $B=\emptyset$.} Then $\ell q_2=1\mod p$ (as  $d=1$), so that $q_1=\ell^2 q_2=\ell$. But then 
$q_1q_2=1\mod p$, which also gives that $f$ is a homeomorphism.

\noindent
{\em Case 3: $C=\emptyset$ with $A,B\neq \emptyset$.} So $q_1=1$, and either $A=B$ or $A=B^c$. 
If $A=B$, then $\ell=d=\ell q_2\mod p$, giving $q_2=1$. If $A=B^c$, we would need $q_2'=1$ for the coefficients to agree. In both cases, this contradicts our assumption that $q_2\neq 1$. 

\noindent
{\em Case 4: $A,B,C\neq \emptyset$}. If the sets are disjoint, we need the three coefficients to be equal, giving in particular $q_2'=1$ which contradicting again  $q_2\neq 1$.  
The case  $A=B$ is ruled out above. If $A=C$, then $\ell=q_1=\ell^2q_2\mod p$, giving $\ell q_2=1\mod p$, i.e. $d=1$ contradicting that $B$ is non-empty. And if $B=C$, $q_2\ell=q_1=\ell^2q_2\mod p$, giving $\ell=1$, contradicting that $A\neq \emptyset$.
We are now left with the case when all three sets intersect, but none are equal. In that case, we need all sums of coefficients to agree: $1+q_2'=1-\ell'q_2'=q_2'-\ell'q_2'$ modulo $p$, implying in particular $q_2'=1\mod p$, again a contradiction. 
    \end{proof}

%\Fnote{The following is extremely sloppy and should not appear in a final version}
%\begin{proof}
%Let $(l,m)$ be such that $f_*(\rho_{(l,m)}) = \rho_{(1,0)}$. And let $l'$ be the multiplicative inverse of $l$. If $f$ did preserve the coproduct then the two expressions
%\[
%l' \left(\sum_{0 < t < l} \beta_{tl',1-l't} + q_1' \sum_{0 < t < q_1 l + pm} \beta_{tq_1'l', 1-tq_1'l'} \right)
%\]
%and
%\[
%q_2' \sum_{0 < t < q_2 } \beta_{tq_2', 1 - tq_2'}
%\]
%would coincide. Let us write linear combinations of $\beta_{t, 1-t}$ as functions $\Z_p \setminus \{ 0 , 1 \} \to \Z_p$ and note that the dependence on $m$ is by adding a constant function. We then obtain the equation
%\[
%l' \chi_A + l'q_1' \chi_B = q_2' \chi_C - q_1'm
%\]
%where
%  \begin{align*}
%    A &= \{ tl' \ | \ 0 < t < l \} \\
%    B &= \{ tq_1'l' \ | \ 0 < t < q_1 l \} \\
%    C &= \{ tq_2' \ | \ 0 < q_2 \}.
%\end{align*}
%There are lots of cases: If $A = \varnothing$ then $l = 1$ and either $B = C$ or $B = C^c$ in the first case we get $q_1 = q_2$ by looking at the sizes of $B$ in the other case we get $q_1' = - q_2'$ by looking at the coefficients. \Fnote{not sure if this is orientation reversing}
%We can now assume that $A \neq \varnothing$, but then either $A \subset B = \Z_p \setminus \{ 0 ,1 \}$ or $\varnothing = B \subset A$. And thus either $A = C$ or $A = C^c$ and we conclude similar to the other case.
%\end{proof}

\subsection{The good and the bad coproduct}\label{sec:badcop}

The coproduct we have described looks for self-intersections of the form $\ga(t)=\ga(0)$ in families of loops $\ga$ where $t\in I$ is any time along the interval. One could instead define a coproduct $\vee_{\frac{1}{2}}$ that only looks for self-intersections at time $t=\frac{1}{2}$. This leads to a rather trivial coproduct, as noted by Tamanoi in \cite{Tam}. Indeed, the coproduct $\vee_{\frac{1}{2}}$ is homotopic to the coproduct $\vee_0$ that looks for (``left-trivial'') self-intersections at $t=0$, i.e.~of the form $\ga(0)=\ga(0)$, or likewise to the coproduct $\vee_1$ looking for (``right-trivial'') self-intersection at $t=1$ only. Combining these two facts can be used to show that the coproduct $\vee_{\frac{1}{2}}$ is only non-trivial on the fundamental class of $[M]$, considered as a family of constant loops, and only when $\chi(M)\neq 0$, with $\vee_{\frac{1}{2}}[M]=(-1)^n\chi(M)[\{*\}\x\{*\}]\in H_0(LM\x LM)$ (see e.g., \cite[Lemma 4.5]{HinWah0}). In fact, the ``good'' coproduct
$\vee$ that we have worked with here can be thought of as a secondary operation, coming from these two reasons that $\vee_{\frac{1}{2}}$ is trivial, homotoping it to its $t=0$ or $t=1$ versions.

\medskip

One way to formulate this relationship between the two coproduct is as follows:
%\Fnote{To me this seems like the same relationship and not "another" one. How about: "This can be made concrete as follows:"}
 the coproduct $\vee$ can be defined as a {\em relative version} of the coproduct  $\vee_{\frac{1}{2}}$, as we explain now. This form of definition first appeared in \cite[Section 9]{GorHin}, in the definition of the dual cohomology product.

The coproduct $\vee_{\frac{1}{2}}$ is defined just like $\vee$ but replacing  the evaluation $e_I$ by the map $\ev_{0,\frac{1}{2}}=(\ev_0,\ev_{\frac{1}{2}}):LM\to M\x M$.  Denoting  $\Figeight=\ev_{0,\frac{1}{2}}^{-1}(\De M) \subset LM$ is the space of ``figure eights'', i.e.~loops $\ga$ with a self-intersection $\ga(0)=\ga(\frac{1}{2})$, and  $U_\eps(\Figeight)=\ev_{0,\frac{1}{2}}^{-1}(U_M)$ its $\eps$--neighborhood, we have
  $$\vee_{\frac{1}{2}}: H_*(LM) 
  \xrightarrow{(\ev_{0,\frac{1}{2}})^*\tau_M\cap }  H_*(U_\eps(\Figeight)  \xrightarrow{R_{\frac{1}{2}}}  H_*(\Figeight)\xrightarrow{cut}  H_*(LM\x LM), $$
  for $R_{\frac{1}{2}}$ a retraction map defined just like the retraction map $R$ used for $\vee$.  

Let $J:LM\x I\to LM$ be the reparametrizing map defined by $J(\ga,s)=\ga\circ \theta_{\frac{1}{2}\to s}$ where $ \theta_{\frac{1}{2}\to s}:[0,1]\to [0,1]$ is the piecewise linear map that fixes $0$ and $1$ and takes $\frac{1}{2}$ to $s$. Note that $J$ restricts on the boundary to a map $J: LM\x \del I \to \calR$ for
\begin{equation}\label{equ:R}
  \calR:=\{\ga\in LM\ |\ \ga|_{[0,\frac{1}{2}]} \ \textrm{or}\ \ga|_{[\frac{1}{2},1]} \ \textrm{is constant}\}
\end{equation}
the subspace of $LM$ of \textit{half-constant loops.}

\begin{proposition}\label{prop:goodandbad}
The loop coproduct $\vee$ can equivalently be defined as the composition of the following sequence of maps:
\begin{multline}\label{equ:cop2}
  H_*(LM,M) \xrightarrow{\x I} H_{*+1}(LM\x I,LM\x \del I\cup M\x \del I) \xrightarrow{\;J\;} H_*(LM,\calR) \\
  \xrightarrow{(\ev_{0,\frac{1}{2}})^*\tau_M\cap }  H_*(U_\eps(\Figeight),\calR)  \xrightarrow{R_{\frac{1}{2}}}  H_*(\Figeight,\calR)\xrightarrow{cut}  H_*(LM\x LM,M\x LM\cup LM\x M). 
\end{multline}
\end{proposition}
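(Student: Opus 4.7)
The strategy is to show that the reparametrization $J$ intertwines the two setups: the ``variable self-intersection time $s$'' formulation used in the original definition of $\vee$ becomes the ``fixed self-intersection time $\tfrac{1}{2}$'' formulation used in $\vee_{1/2}$, once each loop $\gamma$ is replaced by $J(\gamma,s)=\gamma\circ\theta_{1/2\to s}$. Concretely, I plan to verify that $J$ is compatible with each successive step in (\ref{equ:GH}), producing a homotopy-commutative ladder connecting the two compositions.

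The first key observation is the identity
\[
\ev_{0,\frac{1}{2}}\circ J \ = \ e_I \colon LM\times I\longrightarrow M\times M,
\]
which is immediate from $\theta_{1/2\to s}(0)=0$ and $\theta_{1/2\to s}(1/2)=s$. This gives $(e_I)^{*}\tau_M = J^{*}(\ev_{0,1/2})^{*}\tau_M$, identifies $U_{\GH}=J^{-1}(U_\eps(\Figeight))$ and $\F=J^{-1}(\Figeight)$, and makes the Thom-class cap steps on the two sides correspond through $J$ by naturality of the cap product. I would also check the boundary behaviour: $\theta_{1/2\to 0}$ is constant on $[0,\tfrac{1}{2}]$ and $\theta_{1/2\to 1}$ is constant on $[\tfrac{1}{2},1]$, and constant loops trivially lie in $\calR$, so $J$ defines a map of pairs $(LM\times I,LM\times\partial I\cup M\times I)\to(LM,\calR)$, matching the relative structures appearing in (\ref{equ:cop2}).

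Next I need compatibility of the two retraction maps. The map $R_{\GH}(\gamma,s)$ attaches geodesic sticks adjacent to time $s$ to enforce $\gamma(0)=\gamma(s)$, while $R_{\frac{1}{2}}$ attaches sticks adjacent to time $\tfrac{1}{2}$ in its input loop. Because $J$ is exactly the reparametrization that sends time $\tfrac{1}{2}$ to $s$, one obtains a canonical homotopy between $R_{\frac{1}{2}}\circ J$ and (a reparametrized version of) $J\circ R_{\GH}$, obtained by linearly interpolating the supports of the inserted sticks. Similarly, cutting $J(\gamma,s)$ at time $\tfrac{1}{2}$ yields the same two paths as cutting $\gamma$ at time $s$, except that each half is linearly reparametrized; this reparametrization is homotopic to the identity via the straight-line homotopy on $[0,1]$, so the two cut maps agree in homology. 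Finally, under $\cut_{\frac{1}{2}}$ the subspace $\calR$ of half-constant loops lands in $M\times LM\cup LM\times M$, matching the target pair on both sides of the claimed equality.

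Putting these pieces together, the RHS composition factors through intermediate objects that are identified with those on the LHS via $J$ and the canonical reparametrization homotopies, and hence induces the same map on (relative) homology as $\vee$. The main technical obstacle will be Step~3, namely carefully verifying that the reparametrization homotopy between $\cut_s$ on $\F$ and $\cut_{\frac{1}{2}}\circ J$ on $\Figeight$ is compatible with the relative structures introduced by $\partial I$ and $\calR$; in particular, that the homotopy sends the constant-half locus coming from $s\in\partial I$ into $M\times LM\cup LM\times M$ throughout. Once this coherence is established, all intermediate squares commute up to homotopies of pairs, and passage to homology proves the proposition.
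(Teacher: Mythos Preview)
Your proposal is correct and outlines exactly the natural argument: build a homotopy-commutative ladder with $J$ as the rungs, using the identity $\ev_{0,\frac{1}{2}}\circ J=e_I$ to match the Thom-class cap steps and reparametrization homotopies to match the retraction and cut steps. The paper itself does not give a proof of this proposition; it simply refers to \cite[Theorem~2.13]{HinWah0}, whose proof proceeds along precisely these lines, so there is nothing to compare beyond noting that your sketch recovers the argument the authors had in mind.

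One minor remark: in the displayed source pair you should read $M\times I$ rather than $M\times\partial I$ (compare with \eqref{equ:GH}); your treatment of constant loops already handles this correctly.
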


See \cite[Theorem 2.13]{HinWah0}  for a proof that this new definition is equivalent to the one of Section~\ref{sec:defproco}. Note that the last three maps in the statement indeed compose to a relative version of the coproduct $\vee_{\frac{1}{2}}$.

\section{String topology via Hochschild complexes}\label{sec:HH}

In this section we define a product on the Tate-Hochschild complex of any connected dg Frobenius algebra $A$. The Tate-Hochschild complex is an amalgam of the Hochschild chains and cochains, that model, by results of Jones and Chen, the cohomology and homology of the free loop space of simply connected manifolds, respectively. We will see below and in Section~\ref{sec:conf} that, in this case,  the product on the Tate-Hochschild complex relates to both the Chas-Sullivan product, when restricted to the Hochschild cochains, and the Goresky-Hingston coproduct, when restricted to the Hochschild chains. 

\subsection{Differential graded algebras} 
%We start by recalling some classical notions, setting up at the same time some notation for this section.
Let $\mathbb{K}$ be a commutative ring with unit. Recall that a {\it dg $\mathbb{K}$-module}, or \textit{chain complex}, is a graded $\mathbb K$-module $V = \bigoplus_{j\in \mathbb Z} V^j$ equipped with a %$\mathbb K$-linear map
differential  $d_V \colon V \to V$; in this section, all differentials will have degree $+1$. 
% of degree $+1$ such that $d_V\circ d_V = 0.$ %For any element $a \in V^j$ we write $|a| = j$.
The {\it dual} of  $(V, d_V)$ is  the dg $\mathbb{K}$-module  $(V^{\vee}, d_{V^{\vee}})$ with $(V^{\vee})^{-j} = \Hom_{\mathbb K}(V^{j}, \mathbb K)$ and the differential defined by  $d_{V^{\vee}}(\alpha)(x) = -(-1)^{|\alpha|} \alpha (d_V(x))$ on homogeneous elements $\alpha \in V^{\vee}$, where $|\alpha|$ denotes the degree of $\alpha$.  %and $x \in V$. 

A {\it dg $\mathbb{K}$-algebra} $A=(A, d,  \mu)$, or {\it dg-algebra} for short,  is a dg $\mathbb{K}$-module $(A,d)$ equipped with an associative product $\mu\colon A \otimes A \to A$ of degree zero satisfying the Leibniz rule $$\mu \circ (d \otimes \text{id} + \text{id} \otimes d)= d \circ \mu.$$
% We will refer to dg $\mathbb{K}$-algebras by \textit{dg algebras} when the underlying ring is understood.
We write $\mu(a \otimes b)=ab$. The multiplication  is (graded) commutative if $ab=(-1)^{|a||b|}ba$, and {\it unital} 
if there is a map $u\colon \mathbb{K} \to A$ such that the image of $1\in \mathbb K$ is a unit for the multiplication of $A$. %, we denote also by $1$ the unit of $A$. 
% $$\mu \circ (u \otimes \text{id}) = \text{id} = \mu \circ (\text{id} \otimes u);$$ we will write $1\in A$ for the image of $1\in \mathbb K$.
 
  % A \textit{morphism of dg algebras} is a $\mathbb{K}$-linear map $f: A \to A'$ between dg algebras which preserves the structure, i.e. it is simultaneously a map of graded $\mathbb{K}$-algebras and of dg $\mathbb{K}$-modules.
The \textit{cohomology}  $H^*(A)$ of a dg algebra $A=(A,d,\mu)$ becomes a graded $\mathbb{K}$-algebra with product $H^*(A) \otimes H^*(A) \to H_*(A)$ induced by $\mu: A \otimes A \to A$. 
  % whose underlying graded $\mathbb{K}$-module is given by $H^n(A)= \frac{\text{ker} (d: A^n \to A^{n+1}) }{\text{im} (d: A^{n-1} \to A^n)}$ and with product $H^*(A) \otimes H^*(A) \to H_*(A)$ induced by $\mu: A \otimes A \to A$.
  A morphism of dg algebras $f: A \to A'$ is a \textit{quasi-isomorphism} if it induces an isomorphism of graded algebras $H^*(f): H^*(A) \xrightarrow{\cong} H^*(A')$.

\begin{example}  
The following examples  are particularly relevant to our discussion:
\begin{enumerate}
\item The singular cochains on a topological space $X$ equipped with the simplicial differential and cup product define a dg algebra $(C^*(X; \mathbb{K}), d, \smile)$. The cup product is associative and homotopy commutative. 
\item When $\mathbb{K}= \mathbb{Q}$ the dg algebra $(C^*(X; \mathbb{Q}), d, \smile)$ is quasi-isomorphic to a \textit{commutative} dg algebra $(\Apl(X), d, \wedge)$ of $\mathbb{Q}$-polynomial differential forms, as shown by Sullivan. 
\end{enumerate}
\end{example}

One of the main theorems discussed in this note, Theorem \ref{thm1}, involves the weaker notion of an $A_{\infty}$-algebra. Recall that an $A_{\infty}$-\textit{algebra} is a graded $\mathbb{K}$-module  $A$ equipped with linear maps $\{m_n: A^{\otimes n} \to A\}_{n \in \mathbb{Z}_{>0}}$, where each $m_n$ is of degree $2-n$,  satisfying the following relations:
\begin{itemize}
\item $m_1 \circ m_1=0$, in other words, $(A, m_1)$ is a dg $\mathbb{K}$-module,
\item $m_1 \circ m_2= m_2 \circ (m_1 \otimes \text{id}_A + \text{id}_A \otimes m_1)$, in other words, the product $m_2$ satisfies Leibniz rule with respect to $m_1,$
\item more generally, for each positive integer $n$ we have
$$ \sum (-1)^{p+qr} m_{p+1+r}\circ ( \text{id}_A^{\otimes p} \otimes m_q \otimes \text{id}_A^{\otimes r})=0,$$
where the sum runs over all triples of positive integers $(p,q,r)$ such that $n=p+q+r.$
\end{itemize}

In particular, the last equation implies that $m_3: A^{\otimes 3} \to A$ is a chain homotopy for the associativity of $m_2$. Hence, for any $A_{\infty}$-algebra $A$, the cohomology $H^*(A,m_1)$ has an induced graded associative algebra structure. 
\subsection{Differential graded Frobenius algebras} 
The notion of a symmetric dg Frobenius algebra consists of a dg algebra equipped with a non-degenerate symmetric bilinear pairing compatible with the product structure. Our interest in symmetric dg Frobenius algebras is motivated by Poincar\'e duality. 
\begin{definition}\label{def:symfrob} %Let $k$ be a positive integer.
  A {\it dg Frobenius $\mathbb{K}$-algebra of dimension $n$} is a non-negatively graded unital dg $\mathbb{K}$-algebra $(A, d, \mu)$ equipped with a pairing $\langle -, -\rangle\colon A \otimes A \to \mathbb{K}$ such that
\begin{enumerate}
\item $\langle -, -\rangle$ is of degree $- n$, i.e.~non-zero only on $A^i \otimes A^{n-i}$ for $i=0,\dotsb,n$
\item $\langle -, -\rangle$ is non-degenerate, namely, the induced map 
$$\rho\colon A \to A^{\vee}, \quad a \mapsto (b \mapsto \langle a, b\rangle) 
$$
is an isomorphism of degree $-n$
\item $\langle ab ,c\rangle = \langle a,bc\rangle$ \quad for any $a,b,c \in A$
\item $\langle d(a),b\rangle  = -(-1)^{|a|}\langle a,d(b)\rangle $ \quad  for any $a,b \in A$.
\end{enumerate}
\end{definition}

Conditions (3) and (4) imply that $\rho\colon A \to A^{\vee}$ is a map of dg $A$-$A$-bimodules of degree $-n$, where the $A$-$A$-bimodule structure on $A^{\vee}$ is given by $$ 
(a\otimes b)\cdot \beta (c) = (-1)^{|\beta| (|a| + |b|) + |a| (|b| + |c|)} \beta(bca), \quad \text{for any $\beta \in A^{\vee}$ and $a, b, c \in A$.}
$$ 
A dg Frobenius algebra $A$ is said to be \textit{symmetric} if $\langle a,b\rangle = (-1)^{|a| |b|}\langle b,a\rangle$ for any $a,b \in A.$ 

Note that the isomorphism $\rho: A \to A^{\vee}$ gives rise to a degree $n$ product on $A^\vee$:
$$A^{\vee} \otimes A^{\vee} \xrightarrow{\rho^{-1} \otimes \rho^{-1}} A \otimes A \xrightarrow{\mu} A \xrightarrow{\rho}  A^{\vee}.$$
When $A$ is a finitely generated free $\mathbb{K}$-module, e.g.~when $\mathbb{K}$ is a field,
%will write $\Delta\colon A \to A \otimes A$ the coproduct of degree $k$ defined by the composition
the linear dual of that product becomes a coproduct on $A$: 
\begin{equation}\label{equ:copfob}
  \Delta\colon A  \xrightarrow{ \rho} A^\vee \xrightarrow{\mu^{\vee}} (A\otimes A)^{\vee} \cong A^{\vee} \otimes A^{\vee} \xrightarrow{\rho^{-1} \otimes \rho^{-1}} A \otimes A
  \end{equation}
%denote the dual (degree $k$) coproduct. 
This coproduct is a map of $A$-$A$-bimodules.

%\begin{remark}[2-dimensional field theories] \Nnote{To be fixed: This need the coproduct, so some assumption on the algebra, on $\mathbb K$ or a different def of Frobenius I guess, as in Lauda-Pfeifer} While commutative Frobenius algebras classify $2$-dimensional (closed) topological field theories, symmetric Frobenius algebras classify open topological field theories, and non-commutative Frobenius algebras classify planar open topological field theories,  see \cite{Kock} and \cite[Cor 4.5-7]{Lau08}. We do not require commutativity for our algebras. 
%\end{remark}

\begin{example}[Poincar{\'e} duality and relationship to the intersection product]\label{ex:PDint}
  Let $M$ be a closed manifold of dimension $n$. The graded cohomology ring $(H^*(M;\mathbb{K}), \smile)$ with coefficients in the commutative ring $\mathbb{K}$ is an example of a symmetric dg Frobenius algebra of dimension $n$ with trivial differential $d=0$ and pairing given by Poincar\'e duality.
  When $\mathbb{K}$ is a field, the coproduct $\Delta: H^*(M;\mathbb{K}) \to H^*(M;\mathbb{K}) \otimes  H^*(M;\mathbb{K})$ is the dual of the intersection product on homology. 
%The element $\Delta(1)=\sum_{i } e_i \otimes f_i \in H^*(M;\mathbb{K}) \otimes H^*(M;\mathbb{K})$ then corresponds to the Thom class of the diagonal embedding $M \hookrightarrow M \times M$.
  Indeed, the cup product is induced by the diagonal $\De_M^*:H^*(M\x M) \to H^*(M)$, so, following \eqref{equ:copfob}, 
  the coproduct $\Delta: H^*(M;\mathbb{K}) \to H^*(M;\mathbb{K}) \otimes H^*(M;\mathbb{K})$ is the composition 
$$\xymatrix{
  H^k(M;\mathbb{K}) \ar[d]_{[M]\cap }^\cong\ar@{-->}[rr]^-\Delta & & \bigoplus_{i+j=n-k} H^{n-i}(M;\mathbb{K}) \otimes H^{n-j}(M;\mathbb{K}) \ar[d]^{[M]\cap\ot [M]\cap}_{\cong} \\
  H_{n-k}(M;\mathbb{K}) \ar[r]^-{(\De_M)_*} &  H_{n-k}(M\x M;\mathbb{K})  &\ar[l]_-\cong \bigoplus_{i+j=n-k}H_i(M;\mathbb{K}) \otimes H_j(M;\mathbb{K}). %\ar[u]_{P^{-1} \otimes P^{-1}}  
}$$
%Here $AW$ denotes the Alexander-Withney map. 
% and $P: H^k(M;\mathbb{K}) \to H_{n-k}(M;\mathbb{K})$ is the Poincar\'e duality $H^*(M;\mathbb{K})$-bimodule isomorphism defined by $P(\alpha) = [M] \cap \alpha$, for $[M] \in H_n(M;\mathbb{K})$  the fundamental class of $M$ and $\cap$ is the classical cap product.
And the intersection product can been defined as the Poincar\'e dual of the cup product, i.e.~precisely the linear dual of the cohomology coproduct. (See e.g., \cite[App B]{HinWah0} for the relationship between that definition of the intersection product and the one given in Section~\ref{sec:int0}.) 

Applying the above composition of maps  to $1 \in H^0(M;\mathbb{K}) \cong \mathbb{K}$ we get a class $\Delta(1) \in \bigoplus_{i+j=n} H_i(M;\mathbb{K}) \otimes H_j(M;\mathbb{K}) \cong H_n(M \times M; \mathbb{K})$ that is the uniquely such that $[M\x M]\cap \De(1)=(\De_M)_*[M]$. 
%determined Poincar\'e dual in $M\x M$ of the homology class defined by the diagonal embedding $M \hookrightarrow M \times M$.
Hence $\Delta(1)$ maps to the 
Thom class $\tau_M\in H^n(M\x M,M\x M\minus M)$ of Section~\ref{sec:int0}  in relative cohomology,  
as the Thom class is determined by this same relation. 
\end{example}

A dg algebra $A$ is \textit{simply connected} if it is non-negatively graded, $A^0=\mathbb{K}$, and $A^1=0$.
The following result of Lambrecths and Stanley shows that, when $\mathbb K$ is a field and $A$ is commutative and simply connected, a Frobenius structure on $H^*(A)$ can be ``lifted'' to $A$. 
%shows that, for a simply connected closed oriented manifold $M$ and over a field $\mathbb{K}$, there is a commutative symmetric dg Frobenius algebra $A_M$ which ``lifts" the graded Frobenius structure of $H^*(M;\mathbb{K})$ to the cochain level. 

\begin{theorem}\cite[Theorem 1.1]{LamSta08} \label{lastthm} 
Let $\mathbb{K}$ be any field and $\mathcal{A}$ be a simply connected commutative dg $\mathbb{K}$-algebra equipped with a pairing $\langle -, -\rangle_{\mathcal{A}}: \mathcal{A} \otimes \mathcal{A} \to \mathbb{K}$ which induces a graded Frobenius algebra structure of dimension $k$ on its cohomology $H^*(\mathcal{A})$. Then there exists a simply connected commutative symmetric dg Frobenius $\mathbb{K}$-algebra $A$ and a zig-zag of quasi-isomorphisms of commutative dg algebras between $A$ and $\mathcal{A}$ inducing an isomorphism  $H^*(A) \cong H^*(\mathcal{A})$ of graded Frobenius algebras. 
\end{theorem}

\begin{example}[Frobenius models of manifolds]\label{ex:frobmodel}
  Let $M$ be a simply-connected oriented closed manifold and assume  $\mathbb{K}=\mathbb{Q}$. 
  % such that $C^*(M,\mathbb{K})$ can be modeled by a strictly commutative algebra; this is for example always possible when $\mathbb{K}=\mathbb{Q}$, using
  Then the polynomial forms
$\Apl(M)\simeq C^*(M,\mathbb{Q})$ are a strictly commutative, simply connected model of the cochains. The above theorem then yields a commutative dg Frobenius algebra $A_M \simeq C^*(M,\mathbb{Q})$, that ``lifts" the graded Frobenius structure of $H^*(M;\mathbb{Q})$ to the cochain level.  
\end{example}

\subsection{Hochschild chains and cochains}

We recall here the definition of the Hochschild chain and cochain complexes and their relevance in homological algebra and topology.
We will work with the normalized version of the Hochschild complex, assuming that the algebra 
is unital. Let $\overline{A}$  denotes the cokernel of the unit map $\mathbb{K} \to A$. 

For any dg $\mathbb{K}$-module $(V, d)$ we denote by $(s^iV, s^id)$ the $i$-th shifted module given by $(s^iV)^j = V^{i+j}$  and $s^id(v)=(-1)^i d (s^iv)$ for any $v\in V$. The definition of the Hochschild complex will use the suspension $s\overline A$. For simplicity, we write $\overline{a} $ for the element $sa\in s\overline{A} $ where $a \in \overline A$.

\begin{definition} \label{definition:hochschildchain} Let %$(A=\mathbb{K}.1 \oplus \overline{A}, d, \mu)$
  $A$ be an unital dg algebra. The \textit{Hochschild chain complex of $A$} is the complex $(C_*(A,A), \partial= \partial_v + \partial_h)$
where 
$$C_*(A,A) = %\bigoplus_{n \in \mathbb{Z}}C_n(A,A) = \bigoplus_{n \in \mathbb{Z}}
\bigoplus_{m \geq 0} (s\overline{A})^{\otimes m} \otimes A$$
 and 
where $\partial_v$, the \textit{vertical} differential, is given by
\begin{equation*}
\begin{split}
\partial_v(\overline{ a_1}\otimes \cdots \otimes \overline{a_m}\otimes a_{m+1})=&-\sum_{i=1}^m(-1)^{\epsilon_{i-1}} \overline{a_1}\otimes \cdots \otimes \overline{a_{ i-1}} \otimes
\overline{d(a_i)} \otimes \overline{a_{i+1}}\otimes \cdots \otimes a_{m+1} \\
& +(-1)^{\epsilon_m} \overline{a_1}\otimes\cdots \otimes \overline{a_m} \otimes d(a_{m+1})
\end{split}
\end{equation*}
 and $\partial_h$, the \textit{horizontal} differential, is given by
\begin{equation*}
\begin{split}
\partial_h(\overline{a_{1}}\otimes \cdots \otimes \overline{a_m}\otimes a_{m+1})=&\sum_{i=1}^{m-1} (-1)^{\epsilon_i} \overline{ a_1} \otimes \cdots \otimes \overline{a_{i-1}}\otimes \overline{a_{i}a_{i+1}}\otimes \overline{a_{i+2}} \otimes\cdots \otimes 
a_{m+1}\\
&-(-1)^{\epsilon_{m-1}} \overline{a_1}\otimes \cdots \otimes \overline{a_{m-1}}\otimes a_ma_{m+1}\\
&+ (-1)^{(|a_2|+
\dotsb+ |a_{m+1}| - m +1)|a_1| } \overline{a_2}\otimes \cdots \otimes \overline{a_m}\otimes a_{m+1}a_1.
\end{split}
\end{equation*}
Here we denote $\epsilon_i= |a_1| + \dotsb + |a_i| - i$ and  $\epsilon_0= 0$.

We will denote by $C_{-m,k}(A,A) = ((s\overline{A})^{\otimes m} \otimes A)^{k}$ the elements in $(s\overline{A})^{\otimes m} \otimes A$ of total degree $k$. In particular,  $C_k(A,A) = \bigoplus_{m \in \mathbb{Z}_{\geq 0}} C_{-m,k}(A,A)$
\end{definition}

The Hochschild homology of $A$ is defined to be the homology of $(C_*(A,A), \partial= \partial_v + \partial_h)$ and it is denoted by $\HH_*(A,A)$. Hochschild homology is functorial with respect to maps of unital dg algebras. Furthermore, a quasi-isomorphism $f: A \to A'$ between unital dg algebras that are flat as $\mathbb{K}$-modules induces an isomorphism $$HH_*(f): HH_*(A,A) \to HH_*(A',A').$$

\begin{remark}[The Hochschild complex in algebra and topology]\label{rem:HHalgtop}
  The Hochschild chain complex originates in the context of homological algebra. When $A$ is a dg algebra which is projective as a $\mathbb{K}$-module, $C_*(A,A)$ is model for $A \otimes^{\mathbb{L}}_{A \otimes A^{op}} A$, the derived tensor product of $A$ with itself in the category of $A$-$A$-bimodules. %This model may be obtained by resolving $A$ as an $A$-$A$-bimodule using the bar resolution.
  Hence, $\HH_*(A,A)= \text{Tor}_*^{A \otimes A^{op}}(A,A)$. 

  In topology, when $\mathbb{K}=\mathbb{F}$ is a field, %the Hochschild complex models the free loop space in the following ways: If
 and $A\simeq C^*(X;\mathbb{F})$ is a dg algebra cochain model for the singular cochains of a simply connected space $X$, then there is a quasi-isomorphism  $C_*(A,A)\simeq C^*(LX;\mathbb{F})$ between the Hochschild chains of $A$ and  the singular cochains of the free loop space of $X$. This relationship may be deduced over the reals using Chen iterated integrals (as introduced by Chen in \cite{Chen73}, see also \cite{GeJoPe91, Mer}), or over any field using a cosimplicial model for the free loop space (as done by  Jones in \cite{Jon87}). A dual version of the result, in terms of the coHochschild complex of the singular chains coalgebra, that works for coefficients in an arbitrary ring $\mathbb{K}$ may be found in \cite{RiSa19}. 

  Goodwillie gave in \cite{Goo85} the following ``Koszul dual" version of this model of the free loop space that does not assume simple connectivity. Let $\mathbb{K}$ be any commutative ring and  assume $X$ is a path-connected and set instead $A=C_*(\Omega X; \mathbb{K})$, the singular chains on the space of (Moore) loops in $X$, equipped with the concatenation product. Then there is a quasi-isomorphism $C_*(A,A) \simeq C_*(LX;\mathbb{K})$. 
\end{remark}

%\begin{remark}
%Note that an element $
%\overline{a_1}\otimes \cdots \otimes \overline{a_m}\otimes a_{m+1} \in (\sA)^{\otimes m} \otimes A$ belongs to $C_n(A, A)$ if and only if $|a_1| + |a_2| + \dotsb + |a_{m+1}| - m = n$. The differential $\partial$ on $C_*(A,A)$ is of degree $+1$. We use lower index notation in $C_*(A,A)$ to distinguish from the Hochschild cochain complex defined below.
%\end{remark}

\begin{definition}
  \label{definition-cohomology} Let $A$ be a unital dg algebra. 
  The \textit{Hochschild cochain complex of $A$} is the complex $(C^*(A,A), \delta= \delta^v + \delta^h)$ where
   $$C^*(A,A) = \prod_{m\ge 0} \text{Hom}_{\mathbb{K}}( (s\overline{A})^{\otimes m}, A)$$
 and where $\delta^v$ is given by
\begin{equation*}
\begin{split}
\delta^v(f)(\overline{a_{1}}\otimes \cdots\otimes \overline{a_m})={} &d(f(\overline{a_1}\otimes \cdots \otimes \overline{a_ m})) +\sum_{i=1}^m (-1)^{|f|+\epsilon_{i-1}} f(\overline{a_{1}}\otimes \cdots \otimes \overline{d(a_i)} \otimes  \cdots \otimes \overline{a_m}), 
\end{split}
\end{equation*}
and  $\delta^h$  by
\begin{equation*}
\begin{split}
\delta^h(f)(\overline{a_{1}} \otimes \cdots \otimes \overline{a_{m+1}})=& -(-1)^{(|a_1|-1)|f|} a_1 f(\overline{a_{2}}\otimes \cdots \otimes\overline{a_{ m+1}})  \\
&- \sum_{i=1}^m (-1)^{|f|+\epsilon_i}f(\overline{a_{1}}\otimes \cdots \otimes \overline{a_{ i-1}}\otimes\overline{ a_ia_{i+1} }\otimes \overline{a_{i}}\otimes \cdots \otimes \overline{a_{ m+1}}) \\
&+(-1)^{|f|+\epsilon_{m}} f(\overline{a_{1}}\otimes \cdots \otimes \overline{a_ m})a_{m+1},
\end{split}
\end{equation*}
with $\epsilon_i =  |a_1| + \dotsb + |a_i| - i$ and $\epsilon_0 =0$ as before.

Denoting by $C^{m,k}(A,A)= \text{Hom}^n_{\mathbb{K}}( (s\overline{A})^{\otimes m}, A)$ the submodule of $\mathbb{K}$-linear maps  of degree $k\in \mathbb Z$, we have 
$C^k(A,A) = \prod_{m \geq 0}  C^{m, k}(A, A)$. 
\end{definition}
The Hochschild cohomology $\HH^*(A,A)$ of $A$ is defined to be the cohomology of $(C^*(A,A), \delta= \delta_v + \delta_h)$. The Hochschild cochain complex is not at such natural in maps of dg algebras, but if $f: A \to A'$ is a quasi-isomorphism of unital dg algebras that are flat as $\mathbb{K}$-modules, then there is an isomorphism $\HH^*(A,A) \cong \HH^*(A',A')$. We will see in the next section that the product structure of Hochschild cohomology is also invariant under quasi-isomorphisms.  %this isomorphism preserves the algebra structure that will be described in Definition \ref{cup}. 

\begin{remark}\label{rem:coHHalg}
  When $A$  is projective as a $\mathbb{K}$-module, the complex $C^*(A,A)$ is a model for $\mathbb{R}\text{Hom}_{A \otimes A^{op}}(A,A)$, the derived hom from $A$ to itself in the category of $A$-$A$-bimodules. %, obtained by resolving $A$ as an $A$-$A$-bimodule using the bar resolution.
  Hence, $HH^*(A,A)= \text{Ext}^*_{A \otimes A^{op}}(A,A)$. One may also model the Yoneda product $\text{Ext}^*_{A \otimes A^{op}}(A,A)$ via the chain level cup product $\cup$ on $C^*(A,A)$ of Definition \ref{cup}. The graded algebra $(HH^*(A,A), \cup)$ may be also equipped with a Lie bracket of degree $-1$ which is compatible with the cup product. The resulting algebraic structure is known as a Gerstenhaber algebra and was described in \cite{Ger63}. The Gerstenhaber algebra structure on $\HH^*(A,A)$ may be lifted to an $E_2$-algebra structure at the cochain level on $C^*(A,A)$. This statement is known as the \textit{Deligne conjecture} and was solved in \cite{McSm03}. 
\end{remark}

\begin{remark}[Duality]\label{rem:HHdual}  For any dg algebra $A$ the graded hom-tensor adjunction provides an isomorphism $$C_{-m,*}(A,A)^{\vee} \cong C^{m,*}(A, A^{\vee}).$$ If $A$ is a symmetric dg Frobenius algebra which is a finitely generated free $\mathbb{K}$-module then the isomorphism of $A$-$A$-bimodules $A \cong A^{\vee}$ induces an isomorphism of graded $\mathbb{K}$-modules $$C_{-m,*}(A,A)^{\vee} \cong C^{m,*}(A, A^{\vee}) \cong C^{m,*}(A,A).$$

In particular,  if $A$ is a symmetric dg Frobenius algebra model over a field $\mathbb{F}$ for a simply connected closed manifold $M$, e.g.~as provided by Theorem \ref{lastthm}, combining this duality with Remark~\ref{rem:HHalgtop} gives an isomorpism $\HH^*(A,A)\cong H_*(LM;\mathbb{F})$. In Section~\ref{sec:conf} we discuss how the Gerstenhaber algebra structure of $\HH^*(A,A)$ corresponds to the Chas-Sullivan product of Section~\ref{sec:inter} and a loop bracket that in addition uses the circle action, see also \cite{FTV}.
\end{remark}

\subsection{Tate-Hochschild complex}\label{sec:Tate}

In the presence of a Frobenius structure on an algebra $A$ we may combine Hochschild chains and cochains of $A$ into a single unbounded complex through a construction reminiscent of the Tate cohomology of a finite group.

\begin{definition}\cite{RivWan19}  Let $A$ a symmetric dg Frobenius $\mathbb{K}$-algebra of dimension $n >0$. %that is free as a $\mathbb{K}$-module.% 
 Write $\Delta(1) =\sum_{i } e_i\otimes f_i \in A \otimes A$. %for the Casimir element of $A$.
  The \textit{Tate-Hochschild complex } $(\calD^*(A, A), \delta)$ of $A$ is the totalization of the double complex 
$$
\calD^{*,*}(A,A) = \cdots \xrightarrow{\partial_h} s^{1-n}C_{-1,*}(A,A) \xrightarrow{\partial_h} s^{1-n}C_{0,*}(A,A) \xrightarrow{\gamma} C^{0,*}(A,A) \xrightarrow{\delta_h} C^{1,*}(A,A) \xrightarrow{\delta_h} \cdots
$$
%where $\gamma$ is the composition )%$$
%s^{1-k}C_{0,*}(A,A) = s^{1-k}A \xrightarrow{\Delta} s(A \otimes A) \xrightarrow{sT} s(A \otimes A) \xrightarrow{s\mu} sA \xrightarrow{s^{-1}} A = C^{0,*}(A,A),
%$$ 
%and $T(x \otimes y)= (-1)^{|x||y|}y \otimes x$. 
where $\gamma: C_{0,*}(A,A) \cong A \to A \cong C^{0,*}(A,A)$ is given by  $$
\gamma(a) = \sum_{i }(-1)^{|f_i||a|} e_i a f_i, \quad \text{for any $a \in A$.}$$  
The fact that $\partial_h \circ \gamma=0 = \gamma \circ \delta^h$ follows from (4) Definition~\ref{def:symfrob}. 
% so that the horizontal maps above square to zero.  
Here totalization means the direct sum totalization in the Hochschild chains direction and the direct product totalization in the Hochschild cochains direction: 
\begin{equation*}
\begin{split}
\calD^k(A, A)&= \prod_{p\geq 0} \Hom_{\mathbb{K}}((\sA)^{\otimes p}, A)^k \oplus \bigoplus_{p\in \geq 0}( (\sA)^{\otimes p}\otimes A)^{k-n+1}\\
&=C^k(A, A)\oplus C_{k-n+1}(A, A). 
\end{split}
\end{equation*}
% We denote the differential of the totalization by $\delta: \mathcal{D}^*(A,A) \to \mathcal{D}^{*+1} (A,A)$.
\end{definition}

One can equivalently define the Tate-Hochschild complex $\calD^*(A, A)$ as the mapping cone of the chain map
\begin{align}\label{mappingcone}
\widetilde{\gamma} \colon s^{1-n} C_*(A, A) \to C^*(A, A)
\end{align}
defined by $\widetilde{\gamma} (\alpha) = 0$ if $\alpha \in C_{-m, *}(A, A)$ for $m\neq 0$ and $\widetilde{\gamma} (\alpha) =  \sum_{i }(-1)^{|f_i||a|} e_i a f_i$ if $\alpha \in A=C_{0, *}(A, A)$.
%Here  $\Delta(1):= \sum_{i } e_i \otimes f_i \in A \otimes A$. Note $|e_i| + |f_i|=k$ for all $i$.

\begin{definition} \label{Tatepairing} Let $A$ be a dg Frobenius algebra with pairing $\langle -, -\rangle_A: A \otimes A \to \mathbb{K}$. Define a paring $$\langle -, -\rangle_{\mathcal{D}}: \mathcal{D}^*(A,A) \otimes \mathcal{D}^*(A,A) \to \mathbb{K}$$  by
$$\langle f, \alpha\rangle_{\mathcal{D}} := \langle f(\overline{a_1} \otimes \cdots  \otimes \overline{a_m}), a_{m+1} \rangle_A$$ for any $\alpha= \overline{a_1}\otimes \cdots \otimes \overline{a_m} \otimes a_{m+1} \in C_{-m, *}(A, A)$ and $f\in C^{m, *}(A, A)$, and $0$ otherwise.
\end{definition}

The above pairing is compatible with the Tate-Hochschild differential, i.e. it satisfies $$\langle \delta x,y\rangle_{\mathcal{D}}= (-1)^{|x|} \langle x, \delta y\rangle_{\mathcal{D}}.$$ Consequently, we obtain an induced pairing $H^*(\mathcal{D}^*(A,A)) \otimes H^*(\mathcal{D}^*(A,A)) \to \mathbb{K}$.

\begin{remark}[The Tate complex in algebra and topology]\label{rem:Tatealgtop} Let $\mathbb{K}$ be a field and $A$ a symmetric dg Frobenius $\mathbb{K}$-algebra $A$. Then $H^*(\mathcal{D}^*(A,A))$ is isomorphic to the graded $\mathbb{K}$-vector space of morphisms from $A$ to itself in the {\it singularity category} $$\DD_{\sg}(A\otimes A^{\op})= \DD^b(A \otimes A^{\op}) / \Perf(A\otimes A^{\op}),$$ i.e.\ the Verdier quotient of the bounded derived category of finitely generated dg $A$-$A$-bimodules by the full subcategory of perfect dg $A$-$A$-bimodules. This statement was originally proven in Proposition 6.9 of \cite{Wan21} when $A$ is a (non-graded) symmetric Frobenius algebra and extended in Proposition 3.11 of \cite{RivWan19} to the case when $A$ is a symmetric dg Frobenius algebra.  

The singularity category was used in \cite{Orl06} to study singularities of algebraic varieties.

  % Based on the topological meaning of $C_*(A,A)$, $C^*(A,A)$ and the map $\gamma: A \to A$ when $A$ is the symmetric dg Frobenius algebra model over a field $\mathbb{K}$ for a simply connected manifold $M$, we may
  
  In topology, when $A$ is a commutative symmetric dg Frobenius model for $C^*(M,\mathbb{K})$ for a simply connected manifold $M$, Remarks~\ref{rem:HHalgtop} and \ref{rem:HHdual}, we can  think of $\mathcal{D}^*(A,A)$ as a way of connecting the singular chains and cochains on $LM$ into a single unbounded complex via the Euler characteristic of $M$. Indeed, the map $\gamma: A \to A$ in that case takes the product with the element $\sum_ie_if_i$, that identifies with the Euler class of $M$. In other words, the map $\gamma$ is determined by taking a representative of the Poincare dual of the fundamental class $[M]$ to the Euler characteristic $\chi(M)$ thought of as a top dimensional cochain on $M$ by using a representative of the volume form. On cohomology this is just multiplication by $\chi(M)$ thought of as a map $\mathbb{K}\cong H^0(A) \to H^n(A)\cong \mathbb{K}$.
%The pairing $\langle -, -\rangle_{\mathcal{D}}$ may be interpreted as a kind of Poincar\'e duality for $LM$ deduced from the finite dimensional approximations for $LM$ given by a cosimplicial model for $LM$ in terms of the Cartesian products of manifolds $M^n= M \times \dots \times M$.\Nnote{ref for where this is discussed?}%
 A symplectic version of the Tate-Hochschild construction has been described and studied in \cite{CiFrOa10, CieOan20} by combining symplectic homology and cohomology via a ``V-shaped" Hamiltonian.
  \end{remark}

\subsection{Two operations on Hochschild complexes}\label{sec:algop}
We recall the classical cup product on the Hochschild cochains of a dg algebra, and define afterwards a form of dual operation on the Hochschild chains.

\begin{definition}\label{cup} Let $A$ be a dg $\mathbb{K}$-algebra. The \textit{cup product}
$$ \cup: C^{m,*}(A,A) \otimes C^{n,*}(A,A) \to C^{m+n,*}(A,A)$$ is defined on any $f\in C^{m, *}(A, A), g\in C^{n, *}(A, A)$ by the formula
$$f \cup g (\overline{a_1} \otimes \cdots  \otimes \overline{a_{m+n}})= (-1)^{|g|\epsilon_m}f(\overline{a_1} \otimes \cdots  \otimes \overline{a_m})g(\overline{a_{m+1}} \otimes \cdots  \otimes \overline{a_{m+n}}),$$ where $\epsilon_m=\sum_{i=1}^m |a_i| -m$.
\end{definition}
The cup product gives rise to an associative product of degree $0$ on $C^*(A,A)$ that satisfies the graded Leibniz identity with respect to the Hochschild cochains differential $\delta$. Therefore $(C^*(A,A), \delta, \cup)$ is a dg algebra and, consequently, the induced product on $\HH^*(A,A)$ defines a graded associative algebra structure. This computes the endomorphism graded algebra $\text{Ext}^*_{A \otimes A^{op}}(A,A)$ with the categorical Yoneda product. 

We now describe a product on the Hochschild chains of a symmetric dg Frobenius algebra that behaves as a ``dual" to this cup product, following \cite[Section 2.3]{RivWan19}. This product has also appear in a slight variation in e.g.~\cite[Section 6]{Abb15} and \cite[Example 2.12]{Kla13B}.

A dg algebra $A$ is \textit{connected} if it is non-negatively graded and $A^0=\mathbb{K}$. When $A$ is a Frobenius of dimension $n$,  finitely generated free as a $\mathbb{K}$-module,  this implies that also $A^n \cong \mathbb{K}$.

\begin{definition}\label{defn:algebraic coproduct}
Suppose $A$ is a connected symmetric dg Frobenius $\mathbb{K}$ algebra of dimension $n>0$.
The \textit{algebraic Goresky-Hingston product} $$* \colon  C_*(A,A) \otimes C_*(A,A) \to C_*(A,A)$$ is defined on any  $\alpha = \overline{a_1} \otimes \dotsb \otimes \overline{a_p} \otimes a_{p+1}$ and $\beta = \overline{b_1} \otimes \dotsb \otimes \overline{b_q} \otimes b_{q+1}$ by the formula 
$$\alpha * \beta  =\sum_i (-1)^{\eta_i}\overline{b_1}\otimes \cdots \otimes \overline{b_{q+1}e_i}\otimes \overline{a_1}\otimes \cdots \otimes \overline{a_p} \otimes a_{p+1}f_i,$$
%\Fnote{I "lengthened" the bar from $\overline{b_{q+1}}e_i$ to $\overline{b_{q+1}e_i}$}
where $\eta_i = |\alpha| |f_i| +|b_{q+1}| + (|\alpha|+n-1) (|\beta| +n-1)$. 
The product $*$ induces a degree zero product on the $(1-n)$-shifted graded $\mathbb{K}$-module $s^{1-n}C_*(A,A)$.
\end{definition}
Note that $*$ does \textit{not} satisfy the Leibniz rule with respect to the Hochschild chains differential $\partial$. In fact, the product $*$ may be understood as a secondary operation, or a chain homotopy, between two operations. If $p>0$ and $q>0$ we do have
\begin{align}\label{leibniz-rule}
\partial( \alpha * \beta) -\partial(\alpha) * \beta - (-1)^{|\alpha|+k-1}\alpha * \partial( \beta)=0.
\end{align}
However, if $p=0$, so that $\alpha= a_1 \in C_{0, *}(A, A) =A$, we may compute
$$\partial(\alpha * \beta) -\partial(\alpha) * \beta - (-1)^{|\alpha|+n-1}\alpha * \partial(\beta)= \sum_i (-1)^{\eta_i + |\beta|-1-|b_{q+1}|} \overline{b_1} \otimes \dotsb \otimes \overline{b_q} \otimes b_{q+1}e_ia_1f_i.$$
% An analogous computation yields that, if $q=0$, there is a similar obstruction for $*$ to satisfy the Leibniz rule.
The case $q=0$ is analogous. 

Note that, for degree reasons, $e_ia_1f_i$ is only non-zero if $a_1 \in A^0 \cong \mathbb{K}$ and, in such case, $e_ia_1f_i \in A^n\cong \mathbb{K}$.
It follows that $*$ induces a well-defined chain map on the complement of $C_{0,0}(A,A)=A^0\cong \mathbb{K} \subset C_*(A,A)$, which we call the reduced Hochschild complex.

\begin{definition}\label{def:redHH} The \textit{reduced Hochschild chain complex} $\overline{C}_*(A,A)$  of a connected dg algebra $A$ is the subcomplex  $\overline{C}_{*,*}(A,A) \subset C_{*,*}(A,A)$ given by  $\overline{C}_{0,0}(A,A)=0$ and $\overline{C}_{i,j}(A,A) = C_{i,j}(A,A)$ for all pairs of integers $(i,j) \neq (0,0)$. We denote by $\overline{\HH}_*(A, A)$ its homology.
  %of the complex $\overline C_*(A,A)$ with the differential induced by the Hochschild chains differential. We call  $\overline{\HH}_*(A, A)$ the \textit{reduced Hochschild chain complex}. 
\end{definition}

The algebraic Goresky-Hingston product $*$ gives rise to an associative product of degree $0$ 
$$*: s^{1-n}\overline C_*(A,A) \otimes s^{1-n}\overline C_*(A,A) \to s^{1-n}\overline C_*(A,A)$$
 that satisfies the graded Leibniz identity with respect to the reduced Hochschild chains differential. The elements that lead to obstructions for the Leibniz rule on $C_*(A,A)$ to be satisfied are now removed in the sub-complex $\overline C_*(A,A)$. Consequently, the induced product on  $s^{1-n}\overline{\HH}_*(A, A)$ defines a graded associative algebra structure.

% \begin{remark}\label{rem:comm}
%\Nnote{switching and commutativity}
%   \end{remark}

\subsection{Cyclic $A_{\infty}$-algebra on the Tate-Hochschild complex}\label{sec:Tate2}
The following natural questions now arise:
\begin{enumerate}[label=(\subscript{Q}{{\arabic*}})]
\item In what sense are the products $\cup$ and $*$ dual to each other?
\item What is the compatibility between $\cup$ and $*$ and what is the general algebraic structure they are part of?
\item Do $\cup$ and $*$ satisfy a form of homotopy invariance?
\item Is there a homological interpretation for the product $*$ similar to the interpretation of $\cup$ as the endomorphism algebra in the derived category of $A$-$A$-bimodules?
\item What is the precise relationship between the geometrically defined Chas-Sullivan and Goresky-Hingston operations and $\cup$ and $*$?
\end{enumerate}
Question $(Q_5)$ will be discussed in Section~\ref{sec:conf}, following \cite{NaeWil19}. The following two statements adress the remaining questions $(Q_1)$---$(Q_4)$, saying in particular that $\cup$ and $*$ naturally combine to a single product on the Tate-Hochschild complex.
% and opens up further research directions. The main idea is to combine both operations $\cup$ and $*$ as part of a larger structure in the Tate-Hochschild complex.

\begin{theorem} \cite[Theorem 6.3, Proposition 6.5]{RivWan19} \label{thm1}
Let $\mathbb{K}$ be a field and $A$ be a connected symmetric dg Frobenius $\mathbb{K}$-algebra of dimension $n$. There exists a (strictly unital) $A_{\infty}$-algebra structure $\{m_1,m_2, m_3, \cdots\}$ on $\mathcal{D}^*(A,A)=s^{1-n}C_*(A,A) \oplus C^*(A,A)$ such that
\begin{enumerate}
\item $m_1= \delta$ is the Tate-Hochschild complex differential, $m_2$ extends both $*$ and $\cup$ (i.e. $m_2|_{s^{1-n}C_*(A,A)}= *$, $m_2|_{C^*(A,A)}= \cup),$ and $m_i=0$ for $i>3$.
\item The $A_{\infty}$-algebra  is cyclically compatible with the pairing $\langle -, -\rangle_{\mathcal{D}}$:
  $$\langle m_p(\alpha_0\otimes\cdots \otimes \alpha_{p-1}), \alpha_p\rangle_{\mathcal{D}}=(-1)^{|\alpha_0| (|\alpha_1|+ \cdots  + |\alpha_p|} )\langle m_p(\alpha_1\otimes\cdots\otimes \alpha_p), \alpha_0\rangle_{\mathcal{D}}.
$$
\item The induced homology product is (graded) commutative, and there is an isomorphism of graded algebras
  $$H^*(\mathcal{D}^*(A,A)) \cong \HH^*_{sg}(A,A),$$ where the latter is the endomorphism algebra from $A$ to itself in the singularity category of $A$-$A$-bimodules.  
\item Connes' operator $B: C_*(A,A) \to C_{*-1}(A,A)$ extends to an operator $B_{\mathcal{D}}: \mathcal{D}^*(A,A) \to \mathcal{D}^{*-1}(A,A)$ satisfying $B_{\mathcal{D}} \circ \delta+ \delta \circ B_{\mathcal{D}}=0$, $B_{\mathcal{D}} \circ B_{\mathcal{D}}=0$, and making $H^*(\mathcal{D}^*(A,A))$ into a BV-algebra. 
\end{enumerate}
\end{theorem}
% We now discuss the significance of the above result and some consequences.

Statement $(3)$ in Theorem \ref{thm1} provides a homological algebra interpretation for the graded associative algebra structure on $H^*(\mathcal{D}^*(A,A))$, thus giving an answer to $(Q_4)$, while an answer to questions $(Q_1)$ and $(Q_2)$ is given by $(1)$ and $(2)$.
%after noticing that $(s^{1-k}\overline{\HH}^*(A,A), *)$ sits inside $H^*(\mathcal{D}^*(A,A))$ as a sub-algebra. 

\begin{remark}[Manin triples]\label{rem:Manin}
Using constructions and language originated in the theory of quantum groups, we can say a little more about $(Q_1)$ and $(Q_2)$.  Denote the associative product on $H^*(\mathcal{D}^*(A,A))$ by $$\star: H^*(\mathcal{D}^*(A,A)) \otimes H^*(\mathcal{D}^*(A,A)) \to H^*(\mathcal{D}^*(A,A)).$$ Observe that there is an isomorphism $$H^*(\mathcal{D}^*(A,A)) \cong H^*(\text{ker}(\tilde\gamma))  \oplus H^*(\text{coker}(\tilde\gamma)),$$
where $\tilde\gamma: s^{1-n}C_{*}(A,A)\to C^*(A,A)$ is the map $\gamma$ considered as map of chain complexes that is mostly zero. 
In this language, the above result imply the existence of a commutative product $\star$ on the direct sum $H^*(\text{ker}(\gamma))  \oplus H^*(\text{coker}(\gamma))$,  together with a pairing $\langle-,-\rangle_{\mathcal{D}}$, satisfying the following properties:
\begin{enumerate}[label=(\roman*)]

\item The pairing $\langle-,-\rangle_{\mathcal{D}}$ of Definition \ref{Tatepairing} is non-degenerate with respect to the ``monomial length'' chain level filtration on  $\mathcal{D}^{*,*}(A,A)= s^{1-n}C_{*,*}(A,A) \oplus C^{*,*}(A,A)$. More precisely, it induces an isomorphism of graded vector spaces $$C_{-m,*}(A,A) \xrightarrow{\cong} C^{m,*}(A,A)^{\vee}.$$

\item  For any $x,y,z \in H^*(\text{ker}(\gamma))  \oplus H^*(\text{coker}(\gamma))$ we have $\langle x \star y, z \rangle_{\mathcal{D}} = \langle x, y \star z \rangle_{\mathcal{D}}$.

\item Both $(H^*(\text{coker}(\gamma)), \cup)$ and $(H^*(\text{ker}(\gamma)), *) $ are isotropic sub-algebras of  $$(H^*(\text{ker}(\gamma))  \oplus H^*(\text{coker}(\gamma)), \star)$$ with respect to the pairing $\langle -,- \rangle_{\mathcal{D}}$.
\end{enumerate}

The algebraic structure just described is reminiscent of a \textit{Manin triple},  a notion originally introduced in the context of quantum groups. A Manin triple consists of a triple of Lie algebras $(\mathfrak{g}, \mathfrak{g}_+, \mathfrak{g}_{-})$ over a field $\mathbb{K}$ such that $\mathfrak{g}= \mathfrak{g}_+ \oplus  \mathfrak{g}_-$ as vector spaces and $\mathfrak{g}$ is equipped with a symmetric bilinear pairing $\langle- ,-\rangle_{\mathfrak{g}}: \mathfrak{g} \otimes \mathfrak{g} \to \mathbb{K}$ satisfying $\langle [x,y],z\rangle_{\mathfrak{g}} = \langle x,[y,z]\rangle_{\mathfrak{g}}$, inducing an isomorphism $\mathfrak{g}_+ \cong \mathfrak{g}_-^{\vee}$, and for which $\mathfrak{g}_+$ and $\mathfrak{g}_-$ are isotropic Lie sub-algebras. If $\mathfrak{h}$ is a finite dimensional Lie algebra then there is a $1$-$1$ correspondence between Manin triples with $\mathfrak{g}_+=\mathfrak{h}$ and Lie bialgebra structures on $\mathfrak{h}$. In particular, if $\mathfrak{g}$ is a Lie bialgebra then one can describe a canonical Lie bialgebra structure on $\mathfrak{g} \oplus \mathfrak{g}^{\vee}$ called the \textit{Drinfeld double of $\mathfrak{g}$}. Drinfeld showed this construction yields a quasi-triangular Lie bialgebra. A complete reference for these notions and results is \cite{ChPr}.

Define analogously a \textit{graded commutative Manin triple} to be a triple of graded commutative $\mathbb{K}$-algebras $(V, V_+,V_-)$ over a field $\mathbb{K}$ such that
\begin{enumerate}[label=(\roman*)]
 \item $V=V_+ \oplus V_-$ as a vector space and $V$ is equipped with a symmetric bilinear pairing $\langle-,-\rangle_V: V\otimes V \to \mathbb{K}$ inducing an isomorphism $V_+ \cong V_-^{\vee}$,

 \item for any $a,b,c \in V$, we have $\langle ab,c\rangle_V=\langle a,bc\rangle_V$, and 
 
 \item  both $V_+$ and $V_-$ are isotropic sub-algebras of $V$.
 \end{enumerate}
 As in the Lie case,  one can use the duality given by the pairing to reformulate the defining equations of this structure in terms of a type of bialgebra structure on $V$. More precisely, if $W$ is a finite dimensional graded commutative algebra, there is a $1$-$1$ correspondence between graded commutative Manin triples with $V_+=W$ and graded commutative cocommutative \textit{infinitesimal bialgebra} structures on $W$, as introduced by Joni and Rota in \cite{JonRot82}. 
 % Infinitesimal bialgebras were introduced by Joni and Rota in \cite{JonRot82} and have been studied in more detail by Aguiar \cite{Agu00}. The notion of (non-graded) commutative Manin triple has been studied by Bai and Ni \cite{BaiNi13}.
 The data of a graded infinitesimal bialgebra structure on $W$ consists of a product $\cdot: W \otimes W \to W$ of degree $0$ and coproduct $\Delta: W \to W \otimes W$ of degree $k$ such that $\Delta$ is a derivation of the product, namely
 $$\Delta( a \cdot b ) = \Delta(a) \cdot b + (-1)^{|a|k} a \cdot \Delta(b),$$ where we define $(a' \otimes a'')\cdot b:= a' \otimes (a''\cdot b)$ and $a \cdot (b' \otimes b'') := (a\cdot b') \otimes b''$. See  \cite{Agu00} for more about infinitesimal biaglebras.  See \cite{BaiNi13} for (a non-graded version of) the correspondence between commutative cocommutative infinitemsial bialgebras with Manin triples of commutative algebras and, more generally, between Poisson bialgebras and Manin triples of Poisson algebras.
 %Hence, the algebraic information about the compatibility between the two products $\cup$ and $*$  is encoded in a graded commutative Manin triple structure with $V_+= H^*(\text{ker}(\gamma))$, $V_-=H^*(\text{coker}(\gamma))$, and bilinear form $\langle - ,- \rangle_\mathcal{D}$. 
\end{remark}

The following result provides an answer to question $(Q_3)$.

\begin{theorem}\cite[Theorem 1.1]{RivWan21} \label{thm2} Let $\mathbb{K}$ be a field and $(A, \langle -, -\rangle_A)$ and $(B, \langle -, -\rangle_B)$ be two simply connected symmetric dg Frobenius $\mathbb{K}$-algebras of dimension $n$. Suppose that there is a zig-zag of quasi-isomorphisms of dg algebras $$A \xleftarrow{\simeq} \bullet \xrightarrow{\simeq} \dotsb \xleftarrow{\simeq} \bullet \xrightarrow{\simeq} B.$$ Then there is an isomorphism of algebras $$(H^*(\mathcal{D}^*(A,A)), \star) \cong (H^*(\mathcal{D}^*(B,B)), \star)$$ restricting to an isomorphism of subalgebras $$(s^{1-n}\overline{\HH}_*(A,A), *)  \cong (s^{1-n}\overline{\HH}_*(B,B), *).$$
\end{theorem} 
The proof the above theorem relies on the homological interpretation of the Tate-Hochschild cohomology algebra as the endomorphism algebra in the singularity category of $A$-$A$-bimodules (see Remark~\ref{rem:Tatealgtop}). %also known as \textit{singular Hochschild cohomology}.
The isomorphism class of the latter, just like for the Hochschild cohomology algebra with cup product, is an invariant of the quasi-isomorphism type of the underlying dg algebra. A  careful analysis of the relationship between Tate-Hochschild cohomology and singular Hochschild cohomology allows to conclude that the isomorphism $(H^*(\mathcal{D}^*(A,A)), \star) \cong (H^*(\mathcal{D}^*(B,B)), \star)$ restricts to an isomorphism $(s^{1-n}\overline{\HH}_*(A,A), *)  \cong (s^{1-n}\overline{\HH}_*(B,B), *)$ in the simply connected case. We refer to \cite{RivWan21} for further details. 

As a direct consequence of Theorem \ref{thm2} is  the following.

\begin{corollary}  \cite[Corollary 1.2]{RivWan21}
\label{corollary1}
\begin{enumerate} 
\item Let $M$  be a simply connected oriented closed manifold of dimension $n$ and $A$ a Poincar\'e duality model for the cdga of rational polynomial forms $\Apl(M,\mathbb{Q})$, as provided by Theorem \ref{lastthm}. The isomorphism class of the graded algebra structure on $s^{1-n}\overline{\mathrm H}^*(LM;\mathbb{Q})$ induced by the product $*: s^{1-n}\overline{\HH}_*(A,A)^{\otimes 2} \to s^{1-n}\overline{\HH}_*(A,A)$ through the isomorphism $\overline{\mathrm H}^*(LM;\mathbb{Q} ) \cong \overline{\HH}_*(A,A)$ is independent of the choice of Poincar\'e duality model $A\simeq\Apl(M,\mathbb{Q})$. 
\item If $M$ and $M'$ are homotopy equivalent simply connected oriented closed manifolds of dimension $n$, then the algebra structures on $s^{1-n} \overline{\mathrm H}^*(LM;\mathbb{Q})$ and $s^{1-n}\overline{\mathrm H}^*(LM';\mathbb{Q})$ are isomorphic. 
\end{enumerate}
\end{corollary}

\subsection{Final remarks} 
One would like to understand the complete algebraic chain level structure of the Tate-Hochschild complex of a symmetric dg Frobenius algebra. The type of cyclic $A_{\infty}$-algebra described in Theorem \ref{thm1} is a finite type version of a notion discussed in \cite{IyuKonVla21} under the name of \textit{Pre Calabi-Yau} algebra. It is explain there how the associator $m_3$ of a Pre-Calabi Yau algebra gives rise to a \textit{double Poisson bracket}. A precise formula for the map $m_3$ on the Tate-Hochschild complex may be found in Remark 6.4 of \cite{RivWan19}. 

This is only the tip of the iceberg of a very rich algebraic structure on the Tate-Hochschild complex. Part (4) of Theorem \ref{thm1} tells us that $B_{\mathcal{D}}$ and the product $\star$ define a $BV$-\textit{algebra} structure on $H^*(\mathcal{D}^*(A,A))= H^*(\text{coker}(\gamma)) \oplus H^*(\text{ker}(\gamma))$.  By definition, a BV-algebra consists of a triple $(V, \star, B)$ where $(V, \star)$ is a graded commutative algebra, $B: V \to V$ is a degree $-1$ operator satisfying $B \circ B = 0$, and the operation $$\{ x, y \} := B(x \star y) -B(x) \star y - (-1)^{|x|} x \star B(y)$$ is a Lie bracket pf degree $-1$ which is a derivation of $\star$ on each variable, i.e. $\{ -, - \}$ is Poisson compatible with $\star$.

The $BV$-algebra structure on Tate-Hochschild cohomology extends the $BV$-algebra structure of the Hochschild cohomology of a symmetric dg Frobenius algebra. Furthermore, in \cite{KauRivWan} we lift the $BV$-algebra structure of Tate-Hochschild cohomology to the chain level, building upon the framework of \cite{Kau07, Kau08}, solving a cyclic Deligne conjecture for the Tate-Hochschild complex. 
The Lie bracket associated to the $BV$-algebra structure on Tate-Hochschild cohomology gives rise to a compatible (Lie) graded Manin triple structure on $(H^*(\mathcal{D}^*(A,A)), H^*(\text{coker}(\gamma)), H^*(\text{ker}(\gamma)))$ extending the classical Gerstenhaber algebra structure on Hochschild cohomology. This Lie algebra structure on $H^*(\mathcal{D}^*(A,A))$  was also lifted to a cyclic $L_{\infty}$-algebra structure on $\mathcal{D}^*(A,A)$ in \cite{RivWan19}. After dualizing and completing the tensor product appropriately, we obtain on $H^*(\mathcal{D}^*(A,A))$ a graded commutative cocommutative infinitesimal bialgebra equipped with a Gerstenhaber bracket and a Gerstenhaber cobracket that are Lie bialgebra compatible.  Furthermore, the Gerstenhaber bracket and the cocommutative coproduct, as well as the Gerstenhaber cobracket and the commutative product, satisfy additional second order compatibility equations. This algebraic structure, which may be called a \textit{Gerstenhaber bialgebra},  is a graded version of a \textit{Poisson bialgebra}, defined and studied in \cite{BaiNi13}. 

Gerstenhaber bialgebras are reminiscent of similar structures appearing in the theory of quantum groups, where associated to a Lie bialgebra $\mathfrak{g}$, such as the structure induced on the tangent Lie algebra of a Poisson-Lie group, one may consider the commutative cocommutative Hopf algebra $S(\mathfrak{g})$, the symmetric algebra on the vector space $\mathfrak{g}$, with the Poisson bracket and Poisson cobracket induced by the Lie bialgebra structure on $\mathfrak{g}$. Then one proceeds to deform the product to obtain the non-commutative cocommutative universal enveloping algebra $U(\mathfrak{g})$ and then deforms the coproduct in the Poisson cobracket direction to obtain a non-commutative non-cocommutative Hopf algebra $U_h(\mathfrak{g})$. Motivated by the above discussion and by the question of constructing examples of non-commutative non-cocommutative infinitesimal bialgebras one can replace the notion of Hopf algebra by infinitesimal bialgebra. More precisely, one could ask if given a Poission bialgebra $A$ there exists a deformation to a (possibly non-commutative non-cocommutative) infinitesimal bialgebra $A[[h]]$ in the direction of the Poisson bracket and cobracket. One may also study analogous questions in the graded setting for Gerstenhaber bialgebras. 
 
Lie bialgebras also appear in $S^1$-equivariant string topology. In fact, the Chas-Sullivan loop product and the Goresky-Hingston loop coproduct induce a Lie bialgebra structure once we pass to the reduced $S^1$-equivariant homology of the free loop space of a manifold. This structure generalizes previous constructions of Goldman and Turaev from surfaces to manifolds of arbitrary dimension \cite{Sul04}, \cite{Gol86}, \cite{Tur91}.  In the algebraic context, this construction is modeled by a dg Lie bialgebra structure on the reduced cyclic chain complex of a dg Frobenius algebra (\cite{CheEshGan11}, \cite{NaeWil19}, \cite{CieFukLat20}), a construction foreshadowed by Ginzburg's necklace Lie bialgebra \cite{Gin01}. Turaev described the quantization of the Lie bialgebra structure on the zeroth $S^1$-equivariant homology of the free loop space of a surface in terms of skein invariants of links in $3$-manifolds. This quantization has also been studied from an algebraic perspective: in \cite{Sch05} a quantization of Ginzburg's necklace Lie bialgebra of a quiver is constructed and this is generalized in \cite{CheEshGan11} where a quantization of the Lie bialgebra on the cyclic homology of a Frobenius algebra is constructed. We expect that the functorial theory of quantization of Lie bialgebras described by Etingof and Kazhdan in \cite{EtKa96} may be adapted to quantize infinitesimal bialgebras in the direction of a compatible bracket and cobracket. This theory should give rise to explicit and interesting examples of non-commutative non-cocommutative infinitesimal bialgebras associated to dg Frobenius algebras by quantizing the infinitesimal bialgebra structure of $H^*(\mathcal{D}^*(A,A))$ in the direction of the Gerstenhaber bracket and cobracket.

\section{String topology and configuration spaces}\label{sec:conf}
%In this section $\mathbb{K} = \mathbb{R}$ and all (co)homology is taken with that coefficient unless stated otherwise.
In this section we compare the geometrically defined string topology operations of Section~\ref{sec:inter} with the ones defined
algebraically using a dg Frobenius model as in Section~\ref{sec:HH},
under the assumption  that the coefficients $\mathbb{K} = \mathbb{R}$ are the real numbers. The main ingredient is an algebraic model for the Fulton-McPherson compactification of $M \times M \setminus M$, the configuration space of two points in $M$.

Let $M$ be a simply connected oriented closed manifold. By a theorem of Lambrechts and Stanley (stated here as Theorem \ref{lastthm}), applied to the case $\mathbb{K} = \mathbb{R}$, there exists a commutative symmetric dg Frobenius algebra $A$ quasi-isomorphic to real cochains $C^*(M,\R)$. As discussed in Remark \ref{rem:HHalgtop}, we have isomorphisms
\begin{equation}\label{equ:jones}
HH_{*}(A,A) \cong HH_{*}(C^*(M;\R),C^*(M;\R)) \cong H^*(LM;\R).
\end{equation}
%\Fnote{I agree that, technically, I should find a reference that proves that all the those people define the same (homotopic) quasi-isomorphisms, so that later I can just talk about "the" isomorphism. But I don't think such a reference exists.}\Nnote{I'll live with it... }
% 
% Jones' isomorphism \cite{Jon87} \Nnote{or Chen \cite{Chen73,GeJoPe91}? }.
\begin{definition} \label{def:relHH} Define the \textit{relative Hochschild complex} by 
\[
\underline{C}_{*}(A,A) = \bigoplus_{m \geq 1} (s\overline{A})^{\otimes m} \otimes A
\]
Because $A$ is commutative,  $\underline{C}_{*}(A,A)$ is a sub chain complex of $C_*(A,A)$. 
\end{definition} 
The chain complex $\underline{C}_{*}(A,A)$ may also be regarded as the kernel of the natural chain map
$C_{*}(A,A) \to A$, which models the map
$\operatorname{cst} \colon M \to LM$ (see Example~\ref{ex:LM}). Hence \eqref{equ:jones} restricts to an isomorphism
\begin{equation}\label{equ:reljones}
\underline{HH}_*(A,A) \cong H^*(LM, M;\R). 
\end{equation}
%\Fnote{I slightly rephrased things here.}
% As $A$ is commutative, we have  a splitting $HH_*(A,A)\cong H_*(A)\oplus \underline{HH}_*(A,A)$ where $\underline{HH}_*(A,A)$ is the homology of the positive Hochschild complex $\oplus_{m \geq 1} s \overline{A}^{\otimes m} \otimes A$. Under the above isomorphism, this splitting corresponds to the splitting $H^*(LM)\cong H^*(M)\oplus H^*(LM,M)$, 
% so that the \eqref{equ:jones} restricts to an isomorphism
% \[
% \underline{HH}_*(A,A) \cong H^*(LM, M;\R). 
% \]
The algebraic Goresky-Hingston product given in Definition \ref{defn:algebraic coproduct} restricts to a product on 
this relative version of the Hochschild chain complex
(see also e.g., \cite[Sec 6]{Abb15}). The purpose of this section is to sketch a proof  of the following result: 
\begin{theorem}\cite[Theorem 1.3]{NaeWil19}\label{thm:alggeo}
Let $M$ be a simply-connected oriented closed manifold with commutative dg Frobenius algebra model $A\simeq C^*(M;\R)$. Then the isomorphism (\ref{equ:reljones}) %\Nnote{requires that we say which iso we mean above...}
\[
\underline{HH}_*(A,A) \cong H^*(LM, M;\R),
\]
intertwines the algebraic with the topological Goresky-Hingston product of Definitions~\ref{def:GeoCo} (dualised) and \ref{defn:algebraic coproduct}.
\end{theorem}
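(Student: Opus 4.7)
The plan is to bring both sides of the equality to a common cochain-level description that uses the configuration space $FM_2$ of two points in $M$ as its intersection context. On the topological side, I would start from the relative formulation of the Goresky--Hingston coproduct in Proposition~\ref{prop:goodandbad}: writing $\vee$ as a composite of relative cap product, retraction, and cut, with the pair of basepoints $(\gamma(0),\gamma(s))$ naturally factoring through $FM_2$ (whose boundary is $UTM$) rather than through $M \times M$. This replaces the Thom--Pontrjagin data along $\Delta \colon M \hookrightarrow M \times M$ by the Thom--Pontrjagin data along the boundary inclusion $UTM \hookrightarrow FM_2$, which is the intersection context emphasised in Section~\ref{sec:42} and is the version of the topological operation best suited to an algebraic model.

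On the algebraic side, I would realise the Jones isomorphism via an explicit cosimplicial model $A^{\otimes(n+1)}$ of cochains on $LM$, and identify $\underline{C}_*(A,A)$ with the subcomplex corresponding to $(LM, M)$. The formula of Theorem~\ref{thmB} then has the following shape: insert the diagonal class $\Delta(1) = \sum_i e_i \otimes f_i \in A \otimes A$ at the self-intersection slot, and concatenate the two strings of tensor factors on either side. This is precisely the algebraic avatar of the cut map, with $\Delta(1)$ playing the role of the Thom class. The content of the theorem is that this algebraic insertion recovers the geometric capping and retraction under the Jones isomorphism.

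The bridge between the two sides is a commutative dg model for the Fulton--MacPherson compactification $FM_2$ over the Frobenius model $A$, as constructed by Campos--Willwacher and Idrissi. This model encodes both the projection $FM_2 \to M \times M$ away from the diagonal and the Thom class along the boundary $UTM$; crucially, the class of the diagonal is represented precisely by $\Delta(1)$. Composing the cosimplicial model of $LM$ with this model of $FM_2$ along the evaluation $(\ev_0, \ev_s)$ yields a cochain-level factorisation of the geometric coproduct in which the insertion formula of Theorem~\ref{thmB} becomes manifest. The main obstacle, where the real coefficients and simple connectivity genuinely enter, is verifying the compatibility of the models at the boundary $UTM \subset FM_2$: one must show that the Campos--Willwacher/Idrissi model correctly captures the (co)restriction to $UTM$ together with its Euler-class contribution, and that the reparametrizing homotopy $J$ of Proposition~\ref{prop:goodandbad} admits a strict cosimplicial lift through this data. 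Once these compatibilities are verified, a diagram chase combined with the Lambrechts--Stanley existence result (Theorem~\ref{lastthm}), which produces a strictly commutative Frobenius model only in this setting, completes the proof.
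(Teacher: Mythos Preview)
Your plan is essentially the paper's own route: decompose the coproduct via Proposition~\ref{prop:goodandbad} into reparametrization $J$, relative intersection, and cut; replace the naive intersection context by the $FM_2/UTM$ context of \eqref{diag:FM_2}; and then use the Campos--Willwacher/Idrissi real model of $FM_2$ (Theorem~\ref{thm:LSpotpourri}) together with bar-construction models of $LM$, $\Figeight$, and $\RR$ to translate each step into explicit Hochschild formulas, arriving at the insertion of $\Delta(1)$.

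One small correction of phrasing: the pair $(\gamma(0),\gamma(s))$ does \emph{not} factor through $FM_2$ (it hits the diagonal); rather, $FM_2$ replaces $M\times M\setminus M$ in the excision step of the relative intersection product, and its algebraic model supplies an explicit $A\otimes A$--linear homotopy inverse to the excision quasi-isomorphism (Theorem~\ref{thm:LSpotpourri}(2)). The paper carries this out not via a cosimplicial lift but by giving explicit bar-complex models for $\ev_{0,\frac12}\colon LM\to M\times M$, for $\Figeight$, for $\RR$, and for $J^*$ (Lemmas~\ref{lem:LM2}--\ref{lem:R8}, Propositions~\ref{prop:J} and \ref{prop:cut}); the final assembly is then a direct computation rather than a diagram chase. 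But your identification of where the real coefficients and simple connectivity enter (Lambrechts--Stanley for $A$, Campos--Willwacher/Idrissi for $\F_A$) is exactly right.
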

We will follow the line of argument of \cite{NaeWil19}. 
A similar argument to the one presented here gives the equivalence between the algebraic and topological Chas-Sullivan products of Definitions~\ref{def:GeoPro} and \ref{cup} (dualized), giving an alternative proof of \cite[Theorem 11]{Felix-Thomas}. Here we focus on the Goresky-Hingston product.

We will use the definition of the coproduct given in Section~\ref{sec:badcop}. Before embarquing into the proof of the theorem in Section~\ref{sec:real}, we will take a closer look at the crucial step in the definition of the coproduct, namely the intersection map, defining a general notion of {\em intersection products} (see Sections~\ref{sec:41} and \ref{sec:42}). Section~\ref{sec:invariance} then analyses invariance properties of such  intersection products. 

\begin{remark}[Dependence on the manifold $M$]
  Note that we can take $A$ {\em any} commutative dg Frobenius model of $C^*(M;\R)$ in the statement. As the right hand side in the theorem is model-independent, it follows that the algebraic Goresky-Hingston product on $H_*(A,A)$ does not depend on the particular model $A$. This partially recovers Corollary \ref{corollary1}.
  
%  the theorem is true for any dg Frobenius model which is guaranteed to exist by the Theorem of Lambrechts-Stanley).
%Note that the left-hand side only depends on the rational/real homotopy type  This is in line with the second part of Corollary \ref{corollary1} that the Goresky-Hingston coproduct is homotopy invariant in the 1-connected case.

 We saw in Section~\ref{sec:computations} through a lens space example  that  the coproduct on $H_*(LM)$  is in general not a homotopy invariant of $M$,
  at least with integral coefficients, see  Theorem~\ref{thm:inv}. 
% However, as the example of lens spaces shows, this is not true in the non-simply connected case.
In the proof of Theorem~\ref{thm:alggeo}, the topology of $M$ will enter through the homotopy type of the complement of the diagonal $M \times M \setminus M$. This last space identifies with the configuration space of 2 points, a space known to depend in general on more than the homotopy type from the same lens space example, see \cite{LonSal}.
We will use a recent result by Campos-Willwacher and Idrissi \cite{Campos-Willwacher, Idrissi} to obtain an algebraic model for this space in the case of simply-connected manifolds (together with some compatibility datum). 
\end{remark}

To simplify presentation and notation, we will show the corresponding statement for the operation
\[
H_{* + n -1}(LM;\R) \to H_*(LM,M;\R)\xrightarrow{\vee} H_*(LM,M;\R)^{\otimes 2},
\]
that is the pre-composition with the canonical map $H_*(LM;\R) \to H_*(LM,M; \R)$. %where $\vee$ itself and more details, we refer to \cite{NaeWil19}. 

\subsection{Intersection products}\label{sec:41}

Recall from Section~\ref{sec:badcop} that the loop coproduct can be defined as a relative version of the trivial coproduct $\vee_{\frac{1}{2}}$, intersecting with the figure eights space $\Figeight\subset LM$. The crucial step in this definition of the coproduct is the composition
\begin{equation}\label{equ:cap12}
 R_{\frac{1}{2}}\circ ((\ev_{0,\frac{1}{2}})^*\tau_M\cap) \colon H_*(LM, \RR)  \xrightarrow{\ \ }  H_{* - n}(\Figeight, \RR),
  \end{equation}
  see \eqref{equ:cop2}. Here $\RR$ is the subspace of half-constant loops, $\ev_{0,\frac{1}{2}}=(\ev_0, \ev_{\tfrac{1}{2}}) \colon LM \to M \times M$ is the evalutation at $0$ and $\frac{1}{2}$, the cochain $\tau_M \in C^n(M\x M, M\x M \setminus M)$ is a representative of the Thom class of the normal bundle of the diagonal $M\to M\x M$, and $R_{\frac{1}{2}}$ is a retraction map. In Sections~\ref{sec:41}--\ref{sec:invariance}, homology can be taken with integral coefficients.
% \Nnote{ok?}
  
Note that $\Figeight$ is the pullback of $\ev_{0,\frac{1}{2}}$ along the diagonal
$$\xymatrix{\Figeight \ar[r]\ar[d]_{\ev_0} & LM \ar[d]^{\ev_{0,\frac{1}{2}}} \\ M\ar[r]^-\Delta & M\x M,}$$
and one can show that, just like the evaluation map $\ev_0$,  the map $\ev_{0,\frac{1}{2}}$ is a fibration. 
The map \eqref{equ:cap12} is the lift along $\ev_{0,\frac{1}{2}}$ of the intersection product $H_*(M\x M)\xrightarrow{\bullet} H_{*-n}(M)$, taken relative to $\RR$.  We will think of it as a ``relative intersection product'' and will now abstract what is needed to define it. 

\subsubsection{Relative intersection products}
The definition of the relative intersection product \eqref{equ:cap12} immediately generalizes to the following situation. 
Suppose  $p_\EE:\EE \to M \times M$ is a fibration, and $\RR$ is a space equipped with maps $p_\RR:\RR \to M$ and $f: \RR\to \EE$ such that the diagram 
\begin{equation}\label{equ:ER}
\xymatrix{
\RR \ar[r]^-f \ar[d]_{p_\RR} & \EE \ar[d]^{p_\EE} \\
M \ar[r]^-\Delta & M \times M}
\end{equation}
commutes.
%We then wish to define a map
%\[
%H_\bullet(\EE, \RR) \to H_{\bullet - n}( \EE|_{\Delta}, \RR).
%\]
%Moreover, setting $\FF = \varnothing$ we obtain an absolute intersection product
%\[
%H_\bullet(\EE) \to H_{\bullet -n}(\EE|_{\Delta}),
%\]
%which is refined by the relative intersection product in the sense that
%\[
%\begin{tikzcd}
%H_\bullet(\EE) \ar[r] \ar[d] & H_{\bullet - n}(\EE|_\Delta) \ar[d] \\
%H_\bullet(\EE, \RR) \ar[r] & H_{\bullet - n}(\EE|_\Delta, \RR)
%\end{tikzcd}
%\]
%commutes.
%
% One can define such an operation just as above in section \ref{sec:defproco} by
% choosing neighborhoods of the diagonal and retractions of
% $\EE|_{U_\epsilon}$ that are compatible with the inclusion of $\RR \to
% \EE|_{U_\epsilon})$ and do a Thom-Pontrijagin type construction as
% above. Instead we use the following modification, which we think of as
% choosing an infinitesimal neighborhood of $M \subset M \times M$ as
% opposed to an $\epsilon$-neighborhood.
%
%
%The definition of the relative intersection product is given by expanding the definition of the capping map in \eqref{equ:cop2} as in \eqref{equ:cap}. To that extent, we choose a tubular neighborhood of the diagonal $M \subset M \times M$ and obtain the following diagram
%\begin{equation}\label{diag:geom intersection context}
%\begin{tikzcd}
%& TM \setminus M \ar[r] \ar[d] & M \times M \setminus M \ar[d] \\
%M \ar[r, "\sim"] & TM \ar[r] & M \times M
%\end{tikzcd}
%\end{equation}
%and choose a representative of the Thom class $\tau \in C^n(TM, TM \setminus M)$.
%
From this data, we can define the following zig-zag of chain maps: 
\[
\begin{tikzcd}
C_*(\EE) \ar[r]& C_*(\EE, \EE|_{M \times M \setminus M}) & \ar[l, "\sim"'] C_*(\EE|_{U_M}, \EE|_{U_M \setminus M}) \ar[r, "\cap p_\EE^* \tau_M"] & C_{* -n}(\EE|_{U_M}) & \ar[l, "\sim"'] C_{* - n}(\EE|_M),
\end{tikzcd}
\]
where $TM\cong U_M\subset M\x M$ is a tubular neighborhood of the diagonal as in Section~\ref{sec:int0}.  Both wrong-way maps are quasi-isomorphisms: the first one by excision and the second one since we are pulling back a fibration along the homotopy equivalence $M\arsim U_M$. Thus we get a map in homology
\begin{equation}\label{equ:absint}
H_*(\EE) \xrightarrow{\intp_M} H_{* -n}(\EE|_M),
\end{equation}
which we call the  {\em (absolute) intersection product} associated to the fibration $p_\EE$.
To refine this operation to a relative version, we note that the following diagram commutes.
\begin{equation}\label{diag:def of rel int}
\begin{tikzcd}
C_*(\EE) \ar[r]& C_*(\EE, \EE|_{M \times M \setminus M}) & \ar[l, "\sim"] C_*(\EE|_{U_M}, \EE|_{U_M \setminus M}) \ar[r, "\cap p_\EE^*\tau"] & C_{* -n}(\EE|_{U_M}) & \ar[l, "\sim"] C_{* - n}(\EE|_M) \\
C_*(\RR) \ar[u,"f"]\ar[r, equal]& C_*(\RR) \ar[u,"f"] \ar[r, equal]  & C_*(\RR) \ar[u,"f"] \ar[r, "\cap f^* p_\EE^* \tau"] & C_{* -n}(\RR) \ar[u,"f"] & \ar[l, equal] C_{* - n}(\RR)  \ar[u,"f"]
\end{tikzcd}
\end{equation}
Taking vertical mapping cones, this again defines a zig-zag of complexes such that the wrong-way maps are quasi-isomorphisms and thus we obtain a map in homology
\begin{equation}\label{equ:relint}
H_*(\EE, \RR) \xrightarrow{\intp_M}  H_{* - n}(\EE|_M, \RR)
\end{equation}
which we call the {\em relative intersection product} associated to the diagram \eqref{equ:ER}.

\begin{proposition}
For $\EE = LM$ with $p_\EE=\ev_{0,\frac{1}{2}}=(\ev_0, \ev_{\frac{1}{2}}) \colon LM \to M \times M$ and $\RR\hookrightarrow LM$ the space of half-constant loops,  the  operation %\eqref{equ:relint} 
\[
\intp_M\colon H_*(LM,\RR) \rar H_{* -n}(\Figeight,\RR)
\]
coincides with the corresponding map in the definition \eqref{equ:cop2} of the loop coproduct. 
\end{proposition}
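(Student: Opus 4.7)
The plan is to unfold both definitions side by side and match the ingredients. First, I would verify that the ambient diagram \eqref{equ:ER} actually commutes for this input: for $\RR$ the subspace of half-constant loops, any $\gamma\in\RR$ satisfies $\gamma(0)=\gamma(\tfrac12)$ (directly if $\gamma|_{[0,1/2]}$ is constant, and via the loop identification $\gamma(0)=\gamma(1)=\gamma(\tfrac12)$ if $\gamma|_{[1/2,1]}$ is constant), so $\ev_{0,1/2}$ restricted to $\RR$ factors through $\Delta$. Hence the abstract setup applies and we may form the relative intersection product $\intp_M$ of \eqref{equ:relint}.

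Next I would identify the auxiliary spaces appearing in the zig-zag. By construction, $\EE|_M = \ev_{0,1/2}^{-1}(\Delta M) = \Figeight$ and $\EE|_{U_M} = \ev_{0,1/2}^{-1}(U_M) = U_\eps(\Figeight)$. With this, the zig-zag in \eqref{diag:def of rel int} becomes, on absolute chains:
\[
C_*(LM) \to C_*(LM, LM|_{(M\times M)\setminus M}) \xleftarrow{\sim} C_*(U_\eps(\Figeight), U_\eps(\Figeight)\setminus \Figeight) \xrightarrow{\cap\, \ev_{0,1/2}^*\tau_M} C_{*-n}(U_\eps(\Figeight)) \xleftarrow{\sim} C_{*-n}(\Figeight).
\]
The first wrong-way arrow is excision. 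The second wrong-way arrow is induced by the inclusion $\Figeight \hookrightarrow U_\eps(\Figeight)$, which is a quasi-isomorphism because $\ev_{0,1/2}$ is a fibration and $M\hookrightarrow U_M$ is a homotopy equivalence.

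The main step is to verify that inverting these quasi-isomorphisms in homology recovers exactly the maps used in the loop coproduct. For the excision arrow, the inversion is tautological: any cycle representative of a class in $H_*(LM, LM|_{(M\times M)\setminus M})$ can be subdivided to land in $C_*(U_\eps(\Figeight), U_\eps(\Figeight)\setminus \Figeight)$, which is exactly how the coproduct treats this step in \eqref{equ:cop2}. For the second wrong-way arrow, a left-homotopy inverse to the inclusion $\Figeight \hookrightarrow U_\eps(\Figeight)$ is provided precisely by the retraction $R_{1/2}$, since $R_{1/2}\circ\iota = \id_\Figeight$ by construction and $\iota\circ R_{1/2}\simeq \id_{U_\eps(\Figeight)}$ via the straight-line geodesic homotopy used to define $R_{1/2}$. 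Composing, the absolute intersection product \eqref{equ:absint} coincides with $R_{1/2}\circ ((\ev_{0,1/2})^*\tau_M\cap -)$ on homology, which is the absolute version of the map in \eqref{equ:cap12}.

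Finally, I would promote this identification to the relative setting. The commutative ladder \eqref{diag:def of rel int} is exactly the compatibility needed to pass the above identification through vertical mapping cones: the inclusion $\RR \hookrightarrow \Figeight \subset U_\eps(\Figeight) \subset LM$ makes each column's chain map lift the absolute one, and the retraction $R_{1/2}$ restricts to the identity on $\RR$ (since half-constant loops are already figure-eights with $\gamma(0)=\gamma(1/2)$, so no geodesic correction is needed). Thus the relative intersection product $\intp_M\colon H_*(LM,\RR)\to H_{*-n}(\Figeight,\RR)$ agrees with the relative composition $R_{1/2}\circ((\ev_{0,1/2})^*\tau_M\cap -)$ appearing in \eqref{equ:cap12}, as desired. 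The only subtle point to double-check is the compatibility of $R_{1/2}$ with $\RR$ — that $R_{1/2}(\RR)\subseteq \RR$ and that the homotopy $\iota\circ R_{1/2}\simeq \id$ can be chosen to preserve $\RR$ — but both are immediate from the fact that the geodesic sticks involved in $R_{1/2}$ are of length zero on half-constant loops.
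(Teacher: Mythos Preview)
Your proof is correct and follows essentially the same approach as the paper's: both identify the zig-zag spaces $\EE|_M=\Figeight$ and $\EE|_{U_M}=U_\eps(\Figeight)$, match the excision step, and use that $R_{1/2}$ is a homotopy inverse to the inclusion $\Figeight\hookrightarrow U_\eps(\Figeight)$ (the paper cites \cite[Lemma~2.11]{HinWah0} for this, while you argue it directly). Your additional verification that $R_{1/2}$ restricts to the identity on $\RR$ and that the homotopy preserves $\RR$ is a useful detail that the paper leaves implicit.
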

\begin{proof}
  The first three commuting squares in \eqref{diag:def of rel int} are simply spelling out the details in \eqref{equ:cop2} (as in \eqref{equ:cap}), with the only difference that a homotopy inverse to excision was chosen in \eqref{equ:cop2}. 
  % and identifying $TM$ with its image $U_\epsilon$ in $M \times M$.
The last step follows from the fact  that the retraction map $R_{\tfrac{1}{2}}$ in \eqref{equ:cop2} is a homotopy inverse to the inclusion $\Figeight \hookrightarrow LM|_{U_M}$ (this is essentially \cite[Lemma 2.11]{HinWah0}), thus inducing an inverse to the map $H_*( \Figeight, \RR)\to H_*( LM|_{U_M}, \RR )$ in relative homology. 
%$H_*( LM|_{TM}, \RR ) \longleftarrow H_*( \Figeight, \RR)$.
\end{proof}

Similarly, we obtain the loop product as an examle of the (non-relative) intersection product: 
\begin{proposition}\label{prop:loop product from intersection}
For $\EE = LM \times LM$ with $p_\EE = (\ev_0, \ev_0) \colon LM \times LM \to M \times M$ and $\RR = \varnothing$, the operation %\eqref{equ:relint}
\[
\intp_M\colon H_*(LM \times LM) \to H_{*-n}(\Figeight)
\]
coincides with the corresponding map in the definition \eqref{equ:CS} of the loop product. 
\end{proposition}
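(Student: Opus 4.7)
The plan is to directly unpack both sides with the specified data and check they match term by term, much as in the proof of the preceding proposition. First I would substitute $\EE = LM \times LM$ and $p_\EE = \ev_0 \times \ev_0$ into the definition \eqref{equ:absint} and identify each ingredient: since $U_{\CS} = (\ev_0 \times \ev_0)^{-1}(U_M)$ and $\Figeight = (\ev_0 \times \ev_0)^{-1}(\Delta M)$, we have
\[
\EE|_{U_M} = U_{\CS}, \qquad \EE|_{M \times M \setminus M} = (LM \times LM) \setminus \Figeight, \qquad \EE|_M = \Figeight,
\]
and $p_\EE^* \tau_M = (\ev_0 \times \ev_0)^* \tau_M = \tau_{\CS}$. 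With these identifications, the zig-zag defining $\intp_M$ becomes
\[
\begin{tikzcd}[column sep=small]
C_*(LM\x LM) \ar[r] & C_*(LM\x LM, (LM\x LM)\setminus \Figeight) & \ar[l,"\sim"'] C_*(U_{\CS}, U_{\CS}\setminus \Figeight) \ar[r,"\cap \tau_{\CS}"] & C_{*-n}(U_{\CS}) & \ar[l,"\sim"'] C_{*-n}(\Figeight),
\end{tikzcd}
\]
the first three arrows of which are literally the composition $[\tau_{\CS}\cap]$ from \eqref{equ:cap}.

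Second, I would observe that the image of $\intp_M$ in $H_{*-n}(\Figeight)$ is obtained by inverting, on homology, the inclusion $j\colon \Figeight \hookrightarrow U_{\CS}$. In the loop product definition \eqref{equ:CS} the corresponding step is taken instead by applying the explicit retraction $R_{\CS}\colon U_{\CS} \to \Figeight$, followed (further downstream) by $\concat$. The key identification is therefore that $R_{\CS}$ induces on homology the inverse of $j_*$, which is exactly the content of the deformation retraction of $U_{\CS}$ onto $\Figeight$ along the geodesic-stick homotopy (this is the $\ev_0 \times \ev_0$ analogue of \cite[Lemma 2.11]{HinWah0}, used in the proof of the preceding proposition). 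Hence $R_{\CS} \circ [\tau_{\CS}\cap]$ agrees on homology with $\intp_M$.

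Third, to relate this to the full loop product I would note that the loop product is $\concat \circ R_{\CS} \circ [\tau_{\CS}\cap] \circ (\times)$, so the proposition is the statement that the middle composition, before pre-composing with the cross product and post-composing with $\concat$, is precisely $\intp_M$; this is exactly what the previous two steps establish. The only delicate point is the agreement of the two ``homotopy inverses'' used for the excision quasi-isomorphism (the one chosen in \eqref{equ:cap}, e.g.\ by subdivision, versus the abstract inversion in the zig-zag), but any two homotopy inverses agree on homology, so this causes no ambiguity. The main obstacle, as in the relative case, is checking that $R_{\CS}$ really is a deformation retraction, and this is inherited from the explicit formula involving minimal geodesic sticks; once granted, the two definitions coincide on homology by construction.
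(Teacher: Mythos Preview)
Your proof is correct and follows exactly the approach the paper intends: the paper does not give a separate proof of this proposition but introduces it with ``Similarly,'' relying on the reader to adapt the argument of the preceding proposition. Your write-up does precisely this adaptation, identifying $\EE|_{U_M}=U_{\CS}$, $\EE|_M=\Figeight$, $p_\EE^*\tau_M=\tau_{\CS}$, and invoking that $R_{\CS}$ is a homotopy inverse to the inclusion $\Figeight\hookrightarrow U_{\CS}$ (the analogue of \cite[Lemma~2.11]{HinWah0}), together with the observation that any two inverses to the excision map agree on homology.
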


The following properties of the relative intersection product follow directly from the definitions. 
\begin{proposition}
The relative intersection product \eqref{equ:relint} is natural in diagrams \eqref{equ:ER} over a fixed manifold $M$ and refines the absolute intersection product \eqref{equ:absint} in the sense that
\[\begin{tikzcd}
H_*(\EE, \RR) \ar[r,"\intp_M"] & H_{* - n}(\EE|_{M}, \RR) \\
H_*(\EE) \ar[r,"\intp_M"] \ar[u] & H_{* - n}(\EE|_{M}) \ar[u]
\end{tikzcd}\]
commutes. The absolute intersection product is natural in fibrations $p_\EE$, and identifies with the classical intersection product of Section~\ref{sec:int0} in the case $\EE = M \times M$ with $p_\EE=\id$: 
\[
H_*(M \times M) \xrightarrow{\intp_M=\bullet} H_{* -n}(M).
\] 
\end{proposition}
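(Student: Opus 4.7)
The plan is to verify each claim directly from the construction of the (relative) intersection product as a composition of maps and quasi-isomorphisms in diagram \eqref{diag:def of rel int}, and then to pass to vertical mapping cones.

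First, for naturality of the relative intersection product in diagrams over a fixed $M$, I would consider a morphism of such diagrams, i.e.\ maps $\phi\colon\EE\to\EE'$ and $\psi\colon\RR\to\RR'$ compatible with the projections to $M\times M$ and $M$ and with the maps $f,f'$. The compatibility $p_{\EE'}\circ\phi=p_\EE$ gives $\phi^*(p_{\EE'}^*\tau_M)=p_\EE^*\tau_M$, and similarly for $\psi$. Each of the five columns in \eqref{diag:def of rel int} therefore fits into a commutative square connecting the diagram for $(\EE,\RR)$ to the one for $(\EE',\RR')$; the two wrong-way quasi-isomorphisms in each row are also natural (excision is functorial, and the fibration pulled back along $M\hookrightarrow U_M$ is functorial in the fibration). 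Taking vertical mapping cones preserves this naturality because cone is a functor on morphisms of complexes, giving the claimed commutativity in homology.

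Second, the refinement claim follows from observing that for $\RR=\varnothing$ the bottom row of \eqref{diag:def of rel int} is zero, so the vertical cones reduce to the top row; more generally, the map $H_*(\EE)\to H_*(\EE,\RR)$ is induced by the natural inclusion of complexes from the top row of \eqref{diag:def of rel int} into its vertical cone, and this inclusion commutes with each horizontal map by construction. Naturality of the absolute product in fibrations $p_\EE$ is the special case $\RR=\varnothing$ of the first claim.

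Third, to identify the absolute intersection product for $\EE=M\times M$ and $p_\EE=\id$ with the classical intersection product of \eqref{equ:int}, note that $\EE|_{U_M}=U_M$, $\EE|_M=M$, and $p_\EE^*\tau_M=\tau_M$. The zig-zag defining $\intp_M$ becomes
\[
C_*(M\x M)\rar C_*(M\x M,M\x M\setminus M)\xleftarrow{\sim} C_*(U_M,U_M\setminus M)\xrightarrow{\cap\tau_M} C_{*-n}(U_M)\xleftarrow{\sim} C_{*-n}(M),
\]
which differs from \eqref{equ:int} only in that the last map is the inclusion $M\hookrightarrow U_M$ (the zero section of $TM$) rather than the retraction $r\colon U_M\to M$. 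The only potential subtlety, and the main thing to check, is that inverting this inclusion in homology recovers $r_*$: this holds because $r$ is a deformation retraction of $U_M$ onto $M$ (since $U_M\cong TM$ and $r$ is the bundle projection), so $r_*$ is the homology inverse of the inclusion. Hence the composition agrees with \eqref{equ:int}, as required.
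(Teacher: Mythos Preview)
Your proposal is correct and follows exactly the route the paper intends: the paper does not give a separate proof but simply states that these properties ``follow directly from the definitions,'' and your argument spells out precisely that verification from diagram \eqref{diag:def of rel int}. The only point worth noting is that your write-up is considerably more detailed than the paper's, which treats the proposition as self-evident from the construction.
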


%\begin{remark} 
%One actually gets a map between the long exact sequences of the pairs $(\EE, \RR)$ and $(\EE|_M, \RR)$ where the map $H_\bullet(\RR) \to H_{\bullet -n}(\RR)$ is given by capping with the pullback of the Euler class along $\RR \to M$.
%\end{remark}

\subsection{Intersection contexts}\label{sec:42}
The definition of the relative intersection product uses the following data from the manifold:  the diagram
\begin{equation}\label{diag:geom intersection context}
\xymatrix{
& U_M \setminus M \ar@{^(->}[r] \ar[d] & M \times M \setminus M \ar[d] \\
M \ar[r]^-\sim & U_M \ar@{^(->}[r] & M \times M
}
\end{equation}
and the class $\tau_M \in H^n(M\x M, M\x M \setminus M)\cong H^n(U_M, U_M \setminus M)$. This is also the data used to define the classical intersection product.  
We note that the spaces $U_M$, $U_M \setminus M$ and $M \times M \setminus M$ only appear in the intermediate steps of the definition. 

We now describe a slight generalization of a relative intersection product 
\[
H_*( \EE, \RR) \to H_{* -n}(\EE|_M, \RR).
\]
Such a construction may be defined from the data of a diagram \eqref{equ:ER} as before together with the ``manifold data'' recorded by any homotopy pushout diagram of the shape
\begin{equation}\label{diag:extended intersection context}
\begin{tikzcd}
& A \ar[r] \ar[d] & B \ar[d] \\
M \ar[r, "\sim"] & C \ar[r] & M \times M
\end{tikzcd}
\end{equation}
equipped with a class $\tau \in H^n(C,A)$. Indeed, if we denote $\EE|_A,\EE|_B,\EE|_C$ the pull-back of $\EE$ along the maps $A,B,C\to M\x M$, to construct the relative intersection using the corresponding zig-zag \eqref{diag:def of rel int}, all we need is that the maps 
\[
C_*(\EE, \EE|_B) \longleftarrow C_*(\EE|_C, \EE|_A)
\]
and
\[
C_*(\EE|_C) \longleftarrow C_*(\EE|_M)
\] are quasi-isomorphisms. For the second one, this follows as before from our assumption that $M \to C$ is a homotopy equivalence, given that $p_\EE$ is a fibration. For the first, it follows from the assumption that \eqref{diag:extended intersection context} is a homotopy pushout, using Mather's second cube theorem \cite[Theorem 25]{Mather} applied to the pullback of the square along the fibration $p_\EE$, as a replacement of excision.

\begin{definition}\label{def:intcont}
  We call a homotopy pushout diagram of the shape \eqref{diag:extended intersection context} an {\em intersection context}, 
  % if the square is a homotopy pushout diagram and the map $M \to C$ is a homotopy equivalence. We call
 and a cohomology class $\tau \in H^n(C,A)$ an \em{$n$-orientation}.
\end{definition}

We define two oriented intersection contexts to be equivalent if there is a zig-zag of diagrams that is a pointwise homotopy-equivalence, compatible with the orientations.
A  diagram chase gives the following.
\begin{proposition}\label{prop:invint}
Two equivalent oriented intersection contexts associate the same relative intersection map
\[
 \intp_M\colon H_*(\EE, \RR) \to H_{* - n}(\EE|_M, \RR)
\]
to a tuple $(\EE,\RR,p_\EE,p_\RR,f)$ as in diagram \eqref{equ:ER}. 
\end{proposition}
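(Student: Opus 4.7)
The plan is to reduce the statement to the case of a single morphism of oriented intersection contexts and then show that such a morphism induces a ladder of quasi-isomorphisms between the two zig-zags defining the intersection map. Since the relation ``induces the same intersection map'' is an equivalence relation, and an equivalence of oriented intersection contexts is by definition a zig-zag of pointwise homotopy equivalences compatible with orientations, it is enough to treat a single morphism
\[
\phi\colon (A,B,C,\tau) \longrightarrow (A',B',C',\tau')
\]
of oriented intersection contexts over the same $M$, where compatibility of orientations means that $\phi_C^*\tau' = \tau$ in $H^n(C,A)$.

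First I would form the pullbacks of the fibration $p_\EE\colon \EE \to M \times M$ along all the relevant maps, obtaining $\EE|_A,\EE|_B,\EE|_C$ and their primed counterparts, together with induced maps $\EE|_A \to \EE|_{A'}$, $\EE|_B \to \EE|_{B'}$, $\EE|_C \to \EE|_{C'}$ lifting $\phi$. Since $p_\EE$ is a fibration and $\phi$ is a pointwise homotopy equivalence, each of these induced maps is again a homotopy equivalence. Applied to $M \to C \to C'$, this shows that $\EE|_M \to \EE|_C \to \EE|_{C'}$ is a chain of homotopy equivalences.

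Next I would write the two zig-zags defining $\intp_M$ (as in the generalization of \eqref{diag:def of rel int} using $(A,B,C,\tau)$ and $(A',B',C',\tau')$ respectively) and stack them into a ladder, with vertical arrows induced by $\phi$. There are three kinds of squares to check: (i) the inclusion squares $C_*(\EE,\EE|_B) \to C_*(\EE,\EE|_{B'})$ and $C_*(\EE|_C,\EE|_A) \to C_*(\EE|_{C'},\EE|_{A'})$ commute by functoriality of the quotient; (ii) the cap-product square
\[
\xymatrix{
C_*(\EE|_C,\EE|_A) \ar[r]^-{\cap p_\EE^*\tau} \ar[d] & C_{*-n}(\EE|_C) \ar[d] \\
C_*(\EE|_{C'},\EE|_{A'}) \ar[r]_-{\cap p_\EE^*\tau'} & C_{*-n}(\EE|_{C'})
}
\]
commutes because $\phi_C^*\tau' = \tau$, so pulling back to $\EE$ the identity $\phi^*\tau' = \tau$ combined with naturality of cap products yields the square on chains; (iii) the final square involving $\EE|_M$ commutes because both primed and unprimed versions are pulled back from the same map $M \to M \times M$.

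By the preceding step all horizontal ``wrong-way'' maps in both rows are quasi-isomorphisms (by hypothesis on intersection contexts and Mather's cube theorem, as in the discussion before Definition~\ref{def:intcont}), and all vertical maps are homotopy equivalences. A standard diagram chase then shows that the two zig-zags define the same map $H_*(\EE) \to H_{*-n}(\EE|_M)$. Finally, to pass from the absolute to the relative version, I take vertical mapping cones with respect to the map $f\colon \RR \to \EE$ as in \eqref{diag:def of rel int}; naturality of the mapping cone construction shows that the ladder extends to a ladder of relative zig-zags, giving the required equality on $H_*(\EE,\RR) \to H_{*-n}(\EE|_M,\RR)$. The main point, and the only place where the compatibility of orientations is used, is square (ii); everything else is formal once one knows that $p_\EE$ is a fibration so that pullbacks preserve homotopy equivalences.
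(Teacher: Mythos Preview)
Your proof is correct and is precisely the diagram chase the paper alludes to; the paper itself gives no further details beyond the phrase ``a diagram chase gives the following.'' Your identification of the cap-product square (ii) as the only place where orientation compatibility enters, and the use of Mather's cube theorem for the wrong-way maps, matches the setup in the paragraph preceding Definition~\ref{def:intcont}.
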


%Finally, we can construct a slightly simpler yet equivalent oriented intersection context than \eqref{diag:geom intersection context}.

% \subsubsection*{The intersection product revisited}
% Let us first explain the construction in case where $\EE = M \times M$ and $\FF = \varnothing$ i.e. how one defines the classical intersection product
% \[
% H_\bullet(M \times M) \to H_{\bullet -n}(M).
% \]
% One way to define it is by the following sequence of maps
% \[
% H_\bullet(M \times M) \to H_\bullet(M \times M, M \times M \setminus M) \overset{\sim}{\leftarrow} H_\bullet(TM, TM \setminus M) \overset{\cap \tau}{\to} H_{\bullet -n}(TM) \to H_{\bullet - n}(M).
% \]
% The two main ingredients are a Thom class $\tau \in H^n(TM, TM \setminus M)$ and excision. Excision is equivalent to the fact that
% \[
% \begin{tikzcd}
% TM \setminus M \ar[r] \ar[d] & M \times M \setminus M \ar[d] \\
% TM \ar[r] & M \times M,
% \end{tikzcd}
% \]
% is a homotopy pushout diagram. 
% We can replace it by the following equivalent diagram.
The intersection context we will be using in our proof of Theorem~\ref{thm:alggeo} is the following. 
Let $FM_2$ denote the Fulton-McPherson compactification of the configurations space of two points. It is obtained as the real oriented blowup of $M \times M$ along the diagonal. That is $FM_2$ is a manifold with boundary whose interior is $M \times M \setminus M$ and with boundary the unit tangent bundle $UTM$ of $M$. In particular, it fits into the following commuting square
\begin{equation}
\label{diag:FM_2}
\begin{tikzcd}
& UTM \ar[r] \ar[d] & FM_2 \ar[d] \\
M \ar[r,equal] & M \ar[r] & M \times M.
\end{tikzcd}
\end{equation}
%Given an orientation on $M$ we also obtain a Thom class $\tau \in H^n(M, UTM)$.

\begin{proposition}
 Together with the  class  $\tau_M \in H^n(M, UTM)\cong H^n(U_M,U_M\backslash M)$, Diagram \eqref{diag:FM_2}  defines an  oriented intersection context equivalent to \eqref{diag:geom intersection context}.
\end{proposition}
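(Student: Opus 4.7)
The plan is to verify two conditions: (a) the right square of \eqref{diag:FM_2} is a homotopy pushout, and (b) together with the orientation class $\tau_M \in H^n(M, UTM)$, it forms an oriented intersection context equivalent to \eqref{diag:geom intersection context}. The geometric foundation is the standard identification of the Fulton--MacPherson compactification $FM_2$ with the complement $M \times M \setminus U_M^{\mathrm{int}}$, where $D := \overline{U_M^{\mathrm{int}}}$ is a small closed tubular neighborhood of the diagonal, so that $FM_2$ is a compact manifold with boundary $\partial FM_2 \cong UTM$.

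For (a), under the above identification, $M \times M = FM_2 \cup_{\partial D} D$ as a strict pushout along boundary inclusions. The closed disk bundle $D$ deformation retracts onto the diagonal $M$ via the tubular neighborhood retraction $r$, and the restriction $r|_{\partial D}$ is precisely the bundle projection $UTM \to M$ under $\partial D \cong UTM$. Since $UTM \hookrightarrow FM_2$ is a cofibration, the homotopy pushout of the span $FM_2 \leftarrow UTM \to M$ coincides with the strict pushout, and replacing $M$ by $D$ via the homotopy equivalence $s \colon M \hookrightarrow D$ yields
$$FM_2 \cup_{UTM} M \;\simeq\; FM_2 \cup_{\partial D} D \;=\; M \times M,$$
establishing the homotopy pushout property.

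For (b), I would exhibit a zig-zag of pointwise homotopy equivalences of intersection contexts by introducing the intermediate intersection context
$$\begin{tikzcd}
& \partial D \ar[r] \ar[d] & M \times M \setminus U_M^{\mathrm{int}} \ar[d] \\
M \ar[r, "\sim"] & D \ar[r] & M \times M,
\end{tikzcd}$$
with orientation class the restriction of $\tau_M$ to $H^n(D, \partial D)$. This context maps to \eqref{diag:geom intersection context} via the inclusions $D \hookrightarrow U_M$, $\partial D \hookrightarrow U_M \setminus M$, and $M \times M \setminus U_M^{\mathrm{int}} \hookrightarrow M \times M \setminus M$ (each a deformation retract for appropriate choices of radii), and to \eqref{diag:FM_2} via the retraction $r \colon D \xrightarrow{\sim} M$, the homeomorphism $\partial D \cong UTM$, and the identification $M \times M \setminus U_M^{\mathrm{int}} = FM_2$.

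The main obstacle is verifying compatibility of orientation classes under this zig-zag. In each of the three contexts, the group $H^n(C, A)$ identifies with $\tilde{H}^n(\mathrm{Th}(TM))$, and I would argue that the orientation class corresponds to the Thom class of $TM$: for Context 1 by the tubular neighborhood identification of \eqref{equ:tau}, for the intermediate context by definition of the closed disk bundle Thom class, and for Context 2 via the isomorphism $H^n(M, UTM) \cong \tilde{H}^n(\mathrm{Th}(TM))$ induced by collapsing $UTM$ in the mapping cone. The zig-zag morphisms are compatible with these identifications because the underlying maps cover the identity on $M$ and respect the tangent bundle structure, so the Thom class is preserved. Proposition~\ref{prop:invint} then concludes that the two oriented intersection contexts define the same relative intersection product.
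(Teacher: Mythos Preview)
Your proof is correct and follows essentially the same strategy as the paper's: both construct a zig-zag of intersection contexts through an intermediate built from a closed tubular neighborhood of the diagonal. The paper's one-line proof uses the intermediate with $A=\overline U_M\setminus M$ (an $\epsilon$--collar of $UTM$ inside $FM_2$) and $B=FM_2$, whereas you use $A=\partial D$, $B=M\times M\setminus U_M^{\mathrm{int}}$, $C=D$; under the identification $FM_2\cong M\times M\setminus U_M^{\mathrm{int}}$ these are the same picture. Your version is considerably more explicit---you separately verify the homotopy pushout property of \eqref{diag:FM_2} and the compatibility of orientation classes via the Thom class---while the paper leaves these implicit. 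One small point worth noting: your map from the intermediate context to \eqref{diag:FM_2} sends $D\to M$ by the retraction $r$, so the square over $M\times M$ commutes only up to the deformation retraction of $D$ onto the diagonal; this is harmless for the purpose of comparing the induced relative intersection products, but you may want to make that homotopy explicit.
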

\begin{proof}
There is a zig-zag of equivalences between the two diagrams coming from the pair of zig-zag $UTM \rar \overline U_M\minus M\lar U_M\minus M$ and $FM_2=FM_2\lar M\x M\minus M$, for $\overline U_M\minus M$ an epsilon neighborhood of $UTM$  in $FM_2$.
\end{proof}

\subsection{Invariance of intersection products}\label{sec:invariance} 

Suppose $f:M\to N$ is a smooth map, and that $M$ comes equipped with an intersection context, for example one of the form \eqref{diag:FM_2}. 
% with chosen intersection contexts and a homotopy equivalence $f \colon M \to N$. Let us for simplicity assume that the intersection contexts are of the form as in \eqref{4.9}. By
Composing with $f$, we obtain an intersection context for $N$ from that of $M$. We denote the corresponding relative intersection product by $f_* \intp_M$. By construction we have the following naturality property:
\begin{lemma}\label{lem:fint} For 
\[
\begin{tikzcd}\RR\ar[r]\ar[d] & \EE \ar[d]\\ N\ar[r]& N\x N\end{tikzcd}
\]
 as in \eqref{equ:ER}, the square
%\vspace{-6mm}
% $$\hspace{8cm}  
\[
\begin{tikzcd}
    H_*(f^*\EE, f^*\RR) \ar[r, "\intp_M"] \ar[d] & H_{* - n}(f^*\EE|_M, f^*\RR) \ar[d] \\
 H_*(\EE, \RR) \ar[r, "f_* \intp_M"] & H_{* - n} (\EE|_N, \RR)
 \end{tikzcd}
\]
commutes, 
where $f^*\EE$ and $f^*\RR$ are the homotopy pullback of $\EE$ and $\RR$ along $f \times f \colon M \times M \to N \times N$ and  $f \colon M \to N$. Note that the vertical maps are isomorphisms if $f$ is a homotopy equivalence.
\end{lemma}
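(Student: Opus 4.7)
The plan is to prove commutativity by unpacking both sides through the zig-zag of chain complexes that defines the relative intersection product in \eqref{diag:def of rel int}, and exhibiting a natural map between these two zig-zags induced by $f$. Concretely, $f_*\intp_M$ applied to $(\EE,\RR)$ is, by construction, computed using the $N$-intersection context obtained from the $M$-context $A\to B\to M\x M$ (with $M\to C$ an equivalence and orientation class $\tau$) by post-composing the horizontal maps with $f\x f\colon M\x M\to N\x N$ and $f\colon M\to N$. The resulting zig-zag of chain complexes involves chains of the pullbacks $\EE\times_{N\x N}X$ and $\RR\times_{N}X$ for $X\in\{A,B,C\}$.

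On the other hand, $\intp_M$ applied to $(f^*\EE,f^*\RR)$ uses the original $M$-intersection context and produces the analogous zig-zag on chains of $f^*\EE\times_{M\x M}X$ and $f^*\RR\times_{M}X$. Since $f^*\EE$ and $f^*\RR$ are by definition the homotopy pullbacks of $\EE$ and $\RR$ along $f\x f$ and $f$, the pasting lemma for homotopy pullbacks gives canonical equivalences
\[
f^*\EE\times_{M\x M}X \simeq \EE\times_{N\x N}X \quad \textrm{and}\quad f^*\RR\times_{M}X \simeq \RR\times_{N}X
\]
for each $X$. Under these identifications the natural maps $f^*\EE\to\EE$ and $f^*\RR\to\RR$ assemble into a morphism of zig-zags that is compatible with capping by the Thom class, since by construction the $N$-orientation is $\tau$ pulled back along the identity on $A$ and $C$. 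Passing to homology, the mapping-cone construction used in \eqref{diag:def of rel int} yields the claimed commuting square between relative homology groups.

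For the final assertion, if $f$ is a homotopy equivalence then so is $f\x f$, and since $p_\EE$ is a fibration, pulling back along $f\x f$ produces a homotopy equivalence $f^*\EE\to\EE$; likewise $f^*\RR\to\RR$ is a homotopy equivalence by the very definition of the homotopy pullback. The five-lemma applied to the long exact sequences of the pairs $(f^*\EE,f^*\RR)$ and $(\EE,\RR)$, together with the analogous statement after restricting to the fiber over $M$ (respectively $N$), shows that both vertical maps in the square are isomorphisms.

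The main subtlety I anticipate is choosing models of the homotopy pullbacks $f^*\EE,f^*\RR$ that render the zig-zag strictly functorial in $f$: the quasi-isomorphic replacements entering \eqref{diag:def of rel int} (excision and the fibration/equivalence step) and the capping operation with the Thom class must be arranged to commute strictly with the structure maps to $\EE$ and $\RR$, rather than only up to coherent homotopy. Once suitable path-space or small-models replacements are fixed this becomes a routine diagram chase, but some care is needed to ensure that the geometric constructions underlying the zig-zag are natural on the nose along $f$.
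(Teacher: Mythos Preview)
Your proposal is correct and matches the paper's approach: the paper does not give a proof of this lemma beyond the phrase ``by construction'', and what you have written is precisely the unpacking of that phrase. Your use of the pasting lemma to identify $f^*\EE\times_{M\times M}X$ with $\EE\times_{N\times N}X$ (and similarly for $\RR$) is exactly the content of the claim, and the compatibility with the Thom class capping is immediate since the orientation is unchanged when transporting the intersection context along $f$. The final paragraph about strict models is an honest caveat but not a genuine obstruction here: because the zig-zag \eqref{diag:def of rel int} is built from ordinary pullbacks of a fibration and cap products, once one fixes a fibrant replacement for $f^*\EE$ the maps are strictly natural, so the diagram chase goes through without further coherence issues.
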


We are interested in the case $\EE = LN \to N \times N$ with $\RR_N \to N$ the space of half-constant loops, as defined in \ref{sec:badcop}. In that case, $f$ also induces compatible natural maps $LM \to LN$ and $\RR_M \to \RR_N$ giving a commuting diagram
 \[
 \begin{tikzcd}
 H_*(LM, \RR_M) \ar[r, "\intp_M"] \ar[d] & H_{* - n}(\Figeight_M, \RR_M) \ar[d] \\
 H_*(f^*LN, f^*\RR_N) \ar[r, "\intp_M"] \ar[d] & H_{* - n}(f^*LN|_M, f^*\RR_N) \ar[d] \\
 H_*(LN, \RR_N) \ar[r, "f_* \intp_M"] & H_{* - n} (\Figeight_N, \RR_N),
 \end{tikzcd}
 \]
where again the vertical arrows are all isomorphisms if $f$ is a homotopy equivalence. 
Hence comparing the loop coproduct for two manifolds $M$ and $N$
%relative intersection products $\intp_M$ and $\intp_N$ coming from intersection contexts of the form \eqref{diag:FM_2} for each manifolds
is equivalent to comparing the relative intersection products $f_*\intp_M$ and $\intp_N$ on the pair $(LN,\RR_N)$.
In general, these are not equal. Otherwise, since the loop coproduct may be described in terms of the above intersection products (as in Proposition~\ref{prop:goodandbad}), this would yield a proof for homotopy invariance of the loop coproduct, contradicting Theorem \ref{thm:inv}.

\medskip

In contrast, the loop product is known to satisfy homotopy invariance (see Theorem~\ref{thm:producthtpy}), and the (failed) line of argument suggested above for the coproduct does go  through for the product. 
The essential difference is that the loop product only uses the non-relative intersection product (see Proposition~\ref{prop:loop product from intersection}). Its homotopy invariance follows from the following result.

%In view of the previous section, we have reduced the question about homotopy invariance of the loop product/coproduct to the question about homotopy invariance of the (relative) intersection product. 

%Suppose that we have two manifolds $M$ and $N$ with chosen intersection contexts, and fibrations $\EE_M\to M\x M$ and $\EE_N\to N\x N$. 
%Given a map $f:M\to N$,  we get a  pulled back fibration $f^*\EE_N\to M\x M$, and we can try to compare the intersection products of $\EE_M,f^*\EE_N$ and $\EE_N$, and similarly in the relative case. 
%
%When $\EE_M=LM$ and $\EE_N=LN$, the map $f$ also induces a map $Lf:\EE_M\to \EE_N$ that factors through $f^*\EE_N$. If $\RR$ is in each case taken to be the subspace of half-constant loops, Proposition~\ref{prop:invint} and its non-relative analogue gives that
%$\intp_M\EE_M\cong\intp_Mf^*\EE_N$...  \Nnote{fix!}
%but ... 
%
%
%
%It turns out that the absolute intersection product is indeed homotopy invariant.

\begin{theorem}\label{thm:invEE}
Let $f \colon M \to N$ be an orientation-preserving homotopy equivalence of manifolds, each equipped with its intersection context of the form \eqref{diag:FM_2}. Then   for any fibration $\EE \to N \times N$ the intersection product
\[
\intp_N  \colon H_*(\EE) \rar H_{*-n}(\EE|_N)
\]
% associated to the intersection context \eqref{diag:FM_2}
coincides with the transferred intersection product $f_* \intp_M$. 
%\Fnote{Changed $\EE$ to be over $N$ and removed statement about pullback. Everything is supposed to be happening over the target manifold. It seems easier to push forward those intersection contexts than pulling them back.}
\end{theorem}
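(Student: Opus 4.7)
My strategy is to reduce the statement, using Lemma~\ref{lem:fint}, to a comparison of two oriented intersection contexts on $N$. By that lemma, $f_*\intp_M$ is precisely the intersection product computed with the \emph{transferred context}
\[
f_*\xi_M\colon \qquad
\begin{tikzcd}
& UTM \ar[r] \ar[d] & FM_2 \ar[d] \\
N \ar[r, equal] & N \ar[r] & N\times N,
\end{tikzcd}
\]
in which the right vertical map is $FM_2\to M\times M \xrightarrow{f\times f} N\times N$, the bottom map is (homotopic to) $\Delta_N$, and the orientation is the image of $\tau_M$. My first task would be to verify that $f_*\xi_M$ is an oriented intersection context for $N$: since $f$ and $f\times f$ are (orientation-preserving) homotopy equivalences, pasting them onto the pushout for $M$ preserves the homotopy-pushout property, and the Thom class transfers because $f$ is orientation-preserving. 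The theorem then becomes the claim that $f_*\xi_M$ and the intrinsic context $\xi_N$ built from $FN_2$ define the same intersection product.

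The main body of the argument is to produce a zig-zag of pointwise homotopy equivalences between oriented intersection contexts relating $\xi_N$ and $f_*\xi_M$; Proposition~\ref{prop:invint} would then give the result. The key obstacle, which I expect to be the heart of the proof, is that $FM_2$ and $FN_2$ need not be homotopy equivalent—this is precisely the Longoni--Salvatore phenomenon for the lens spaces discussed in Section~\ref{sec:invariance0}. A direct map between the configuration spaces is therefore unavailable. Instead, I would rephrase both contexts in a way that does not reference the specific intermediate spaces, using that the common cofiber of both presentations is a model for the Thom space of the tangent bundle: both $M\times M/(M\times M\setminus M)$ and $N\times N/(N\times N\setminus N)$ realize $\mathrm{Th}(TM)\simeq \mathrm{Th}(TN)$, and the orientation classes $\tau_M$ and $\tau_N$ are the Thom classes of these bundles.

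The required compatibility of Thom classes under an orientation-preserving homotopy equivalence is classical. The class $\tau_M\in H^n(M\times M, M\times M\setminus M)$ is Poincar\'e--Lefschetz dual to the diagonal fundamental class $[\Delta M]\in H_n(M\times M)$, and since $f\times f$ has degree one we have $(f\times f)_*[\Delta M]=[\Delta N]$; naturality of the duality isomorphism then matches $\tau_M$ with the pullback of $\tau_N$ under the appropriate identification. Combined with the abstract zig-zag through the Thom-space models, this yields the coincidence of the two intersection products on $H_*(\EE)$ and hence $\intp_N=f_*\intp_M$. The most delicate step is to realize this comparison at the chain level bypassing the mismatch between $FM_2$ and $FN_2$; one natural route is to work directly with the Thom spaces and invoke Spivak-normal-fibration / Spanier--Whitehead duality to show that an orientation-preserving homotopy equivalence of closed oriented manifolds induces an equivalence of Thom spectra respecting the Thom classes, from which the equality of the capping operations is automatic.
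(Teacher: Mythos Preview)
Your plan has a genuine gap at exactly the point you flag as ``the most delicate step''. You propose to deduce $\intp_N = f_*\intp_M$ from Proposition~\ref{prop:invint} by exhibiting a zig-zag of pointwise homotopy equivalences between the transferred context $f_*\xi_M$ and the intrinsic context $\xi_N$. But such a zig-zag cannot exist in general: Proposition~\ref{prop:invint} is stated for the \emph{relative} intersection product, so an equivalence of oriented contexts would force $f_*\intp_M = \intp_N$ relatively as well, and hence homotopy invariance of the loop coproduct, contradicting Theorem~\ref{thm:inv}. Concretely, the $B$--vertex of $f_*\xi_M$ is $FM_2$ (mapped to $N\times N$ via $f\times f$) while that of $\xi_N$ is $FN_2$; since $FM_2\not\simeq FN_2$ in general (Longoni--Salvatore, as you note), no pointwise equivalence of diagrams is available, and no rephrasing ``through a common cofiber'' produces one --- passing to the cofiber loses exactly the data that Proposition~\ref{prop:invint} requires.

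Your fallback via Thom spectra and Spanier--Whitehead/Atiyah duality is the right kind of idea for the \emph{absolute} statement, but it is a different argument from the one you set up and you do not carry it out. The paper's proof goes a different route entirely: following Gruher--Salvatore, one constructs a geometric transfer map $\theta_f\colon H_*(\EE)\to H_*(f^*\EE)$ that intertwines $\intp_N$ with $\intp_M$, composes it with the naturality map of Lemma~\ref{lem:fint}, and then checks (by a Thom-isomorphism computation on the underlying manifolds) that the composite is the identity on $H_*(\EE)$. The point is that $\theta_f$ is built from bundle data rather than from a comparison of intersection contexts, and it is precisely this construction that fails to extend to relative homology --- which is why the argument proves Theorem~\ref{thm:invEE} but not the relative analogue.
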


\begin{proof}[Sketch proof]
  The above theorem is proved in the papers \cite{CKS,Cra,GruSal,Felix-Thomas} in the context of string topology, i.e.~in the special case when $\EE=LN\to N\x N$,  as the crucial ingredient in the homotopy invariance of the loop product, and the proofs generalise to our context. 
  The proof of Gruher-Salvatore in  \cite{GruSal} is closest to our langage, so we follow that paper.
  Translating to our notation, Theorem 8 in that paper defines a
  product preserving map $\theta_f$ in homology from $(\EE,\intp_N)$ to $(f^*\EE,\intp_M)$. This map can be composed by the product-preserving map
  $(f^*\EE,\intp_M)\to (\EE,f_*\intp_M)$ given by the non-relative version of Lemma~\ref{lem:fint}. As both maps preserve the product, it is enough to show that they compose to the identity on $\EE$. This statement corresponds to the last display in the proof of Proposition 23 in \cite{GruSal}. This last computation is only stated in the case of the loop space in that paper, but it comes from an analysis of the maps using Thom isomorphisms that only use what the maps do on the underlying manifolds. 
% Gruher-Salvatore work with fiberwise monoids over manifold $E\to M$. In their notation, $E^{-TM}$ is the ring spectrum with product, in homology, coming from the intersection product $\intp_M$ for the fibraion $\EE=E\x E\to M\x M$. (The monoid part of the fiber is only used for concatenation at the end of the definition of the product.) To show that a map $f:M\to N$ induces an isomorphism of the associated ring spectra, they frist consider the maps $LM^{-TM}\to f^*LN^{-TM}\leftarrow LN^{TN}$. Both maps are ring maps by their Theorem 8, that is stated in the more general set-up of fiberwise monoids. 
  \end{proof}

%\begin{theorem}
%Let $f \colon N \to M$ be an orientation-preserving homotopy equivalence of manifolds. Then
%\[\begin{tikzcd}
%H_*(f^*\EE) \ar[r, "{\text{int}_N}"] \ar[d] & H_{* - n}(f^*\EE|_\Delta) \ar[d] \\
%H_*(\EE) \ar[r, "{\text{int}_M}"] & H_{* - n}(\EE|_\Delta),
%\end{tikzcd}
%\]
%where $\text{int}_N$ and $\text{int}_M$ are the intersection products in $N$ and $M$, respectively,
%commutes for any fibration $\EE \to M \times M$.
%\end{theorem}

%in \cite{CKS}, it is proved 
% The homotopy invariance of the string topology loop product and string bracket, Cohen-Klein-Sullivan
%by showing that the configuration space of two points becomes homotopy invariant after a certain stabilization, while in  \cite{Cra} and \cite{GruSal}  a version of Atiyah duality in the topological context is used. See also \cite{Felix-Thomas} for a purely algebraic version of a similar idea, in the context of rational homotopy theory. The generalised statement written here is obtained by replacing $LM$ by any fibration $\EE\to M\x M$ in those proves.
An alternative approach to the above statement is to use paramertrized homotopy theory as in \cite{May-Sigurdsson}, identifying the intersection product considered here with the evaluation map of the Costenoble-Waner duality for $M$.

% \Fnote{mention: String topology on Gorenstein spaces, Felix-Thomas,
%                 GENERALIZED STRING TOPOLOGY OPERATIONS, Gruher-Salvatore,
%                 LOOP HOMOLOGY AS FIBREWISE HOMOLOGY, Crabb,
%                 The homotopy invariance of the string topology loop product
% and string bracket, Cohen-Klein-Sullivan}

% \Fnote{Also follows from May-Sigurdsson - 18.6. Parametrized Atiyah duality for closed manifolds}

\begin{remark}
As the example of lens spaces shows (Theorem \ref{thm:inv}), the above theorem does not generalize to the relative intersection product. The above argument fails in that the composition $\iota\circ \theta_f$ may fail to be equal to the identity in relative homology. This is equivalent to the lack of a Thom isomorphism type map in the computation to be an isomorphism in relative homology, relating to the issue discussed in  \cite[Sec 3.8]{HinWah1}. 
\end{remark}

\subsection{Equivalence between algebraic and geometric models for the loop coproduct}\label{sec:real} 
We will now give a sketch of the proof of Theorem \ref{thm:alggeo}. We first describe real models (in the sense of rational homotopy theory) for each of the steps in the definition of the loop coproduct and compare the final result with the description in \ref{defn:algebraic coproduct}. More precisely, up to crossing with an interval,  we can write the geometric coproduct \eqref{equ:cop2} as the composition of the following three maps:  
\begin{equation}
\label{diag:defn of coproduct}
\begin{tikzcd}
C_{{*} + n}(LM \times I, LM \times \partial I) \xrightarrow{J} C_{{*} + n}(LM, \mathcal{R}) \xrightarrow{\text{int}} C_{{*}}(\Figeight, \mathcal{R}) \xrightarrow{\text{cut}} C_{*}(LM,M)^{\otimes 2},
\end{tikzcd}
\end{equation}
where the middel map is the intersection product discussed in Sections~\ref{sec:41} and \ref{sec:42}.
We will give models for each of these three maps. Most of what we do
in this section can be done with rational coefficients; real
coefficients will only be needed at the very end of the section, when
picking a particular model of the configuration space $FM_2$. For simplicity, we will ignore sign issues in this section. 

A major ingredient will be the Eilenberg-Moore theorem, that we will use to give rational models of homotopy pull-backs. We will apply it to the functorial rational model of polynomial forms $\Apl$, with $\Apl(X)\simeq C^*(X;\mathbb Q)$:  
%For that let us first recall the following theorem that says that the functor $\Apl(-)$ \Nnote{why $\Apl$ and not plain de Rham form as we are over the real? also your sentence gives the impression that you are using $\Apl$ because it has that property but  McCleary uses singular cochains anyway.}\Fnote{Because $\Apl$ or $\Apl \otimes \R$ is defined for any topological space and not just manifolds. In general Eilenberg-Moore tells you how to get the complex of cochains on the fiber product, but not what the algebra structure is. We will actually never use the algebra structure but we need to at least know that cochains of the fiber product are modules over cochains over the factors.} sends homotopy pullbacks to homotopy fiber products of algebras at least if the base is simply connected. 
\begin{theorem}[Eilenberg-Moore; see for instance
Theorem 7.14 in \cite{McCleary}]\label{thm:Eilenberg-Moore} 
% "https://www.math.uni-hamburg.de/home/holstein/lehre/RHTnotes.pdf" Theorem 7.37 and Lemma 9.13
% \Fnote{Find better reference, maybe Theorem 7.14 in McCleary "User's guide to spectral sequences"}
Suppose that
\[
\begin{tikzcd}
W \ar[r,"f"] \ar[d, "g"'] & \ar[d] X \\
Y \ar[r] & Z
\end{tikzcd}
\]
is a homotopy pullback of spaces, such that $Z$ is simply-connected and either $X$ or $Y$ are connected. Then the natural map 
\[
\Apl(X) \otimes^L_{\Apl(Z)} \Apl(Y) \rar \Apl(X) \otimes_{\Apl(Z)} \Apl(Y) \rar \Apl(W)
\]
induced by $f^* \colon \Apl(X) \to \Apl(W)$ and $g^* \colon \Apl(Y) \to \Apl(W)$ is a quasi-isomorphism. Here $\Apl(X) \otimes^L_{\Apl(Z)} \Apl(Y)$ denotes the derived tensor product.
%that can be computed using the standard bar resolution.
\end{theorem}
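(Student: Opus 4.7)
The plan is to prove the Eilenberg--Moore quasi-isomorphism by constructing a Sullivan-style cofibrant replacement on the algebra side and then comparing the resulting filtration spectral sequence with the Serre spectral sequence of the pulled-back fibration.

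First, I would reduce to the case where $X \to Z$ is a Serre fibration and $W = X \times_Z Y$ is the strict pullback, using that $\Apl$ is homotopy invariant. Next, I would construct a relative Sullivan model of $\Apl(Z)\to \Apl(X)$, namely a factorization
\[
\Apl(Z) \hookrightarrow \Apl(Z)\otimes \Lambda V \xrightarrow{\;\simeq\;} \Apl(X)
\]
with $\Lambda V$ free graded-commutative and the left map a cofibration of cdgas. The existence of such a model is standard under the hypotheses (simply-connectedness of $Z$ and connectedness) because one can attach the generators of $V$ inductively in strictly positive degrees. Since $\Apl(Z)\otimes \Lambda V$ is cofibrant as an $\Apl(Z)$-module, it computes the derived tensor product:
\[
\Apl(X) \otimes^L_{\Apl(Z)} \Apl(Y)\;\simeq\;(\Apl(Z)\otimes \Lambda V)\otimes_{\Apl(Z)} \Apl(Y)\;=\;(\Apl(Y)\otimes \Lambda V,\, D),
\]
where $D$ is the twisted differential obtained by pulling the Sullivan differential back along $\Apl(Z)\to \Apl(Y)$.

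The heart of the argument is to show that the natural comparison map $(\Apl(Y)\otimes \Lambda V, D)\to \Apl(W)$ is a quasi-isomorphism. I would filter both sides by ``base degree'': the word length in $V$ on the algebraic side gives a spectral sequence with
\[
E_2^{p,q}\;=\;H^p(Y;\mathbb{Q})\otimes H^q(\Lambda V,\bar d)\;\cong\;H^p(Y;\mathbb{Q})\otimes H^q(F;\mathbb{Q}),
\]
using that $(\Lambda V,\bar d)$, obtained by quotienting by the augmentation ideal of $\Apl(Z)$, is a Sullivan model for the homotopy fiber $F$ of $X\to Z$. On the topological side, the Serre spectral sequence of the pulled-back fibration $W\to Y$ has $E_2^{p,q}=H^p(Y;\calH^q(F;\mathbb{Q}))$. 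The simply-connectedness of $Z$ is exactly what ensures that $\pi_1(Y)$ acts trivially on $H^*(F;\mathbb Q)$, since the monodromy factors through $\pi_1(Z)=0$; hence the local system is trivial and the two $E_2$ pages agree. The natural map is a filtered cdga map realizing this isomorphism on $E_2$, so by Zeeman's comparison theorem it is a quasi-isomorphism.

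The main obstacle is the convergence of both spectral sequences and the strong compatibility of filtrations; the simply-connectedness hypothesis on $Z$ is non-negotiable, as classical examples (e.g.\ $Z=S^1$) show that the Eilenberg--Moore map can fail to be a quasi-isomorphism without it. An alternative and perhaps cleaner exposition fixes $X\to Z$ and inducts on a CW structure of $Y$: the base case $Y=\text{pt}$ reduces to $(\Lambda V,\bar d)\simeq \Apl(F)$, and the inductive step is preserved under homotopy pushouts along cofibrations and filtered colimits via Mayer--Vietoris on both sides, using that the derived tensor product commutes with such colimits.
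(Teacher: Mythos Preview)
The paper does not actually prove this theorem: it is stated with the attribution ``see for instance Theorem 7.14 in \cite{McCleary}'' and then used as a black box, with the subsequent paragraphs only specifying the bar-construction model of the derived tensor product that will be used in applications. So there is no proof in the paper to compare your proposal against.

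That said, your sketch is a correct and standard route to the result in the rational-homotopy setting: replace $\Apl(Z)\to\Apl(X)$ by a relative Sullivan model, identify the quotient $(\Lambda V,\bar d)$ with a model of the fibre, and compare the word-length filtration on $(\Apl(Y)\otimes\Lambda V,D)$ with the Serre spectral sequence of the pulled-back fibration $W\to Y$. The use of simply-connectedness of $Z$ to trivialise the local system is exactly the point. One small quibble: your $E_2$-page identification on the algebraic side as written presumes $H^*(Y)$ and $H^*(\Lambda V,\bar d)$ are of finite type (or at least that the relevant K\"unneth map is an isomorphism); in practice this is handled either by the finiteness hypotheses implicit in the rational-homotopy framework or by your alternative inductive argument over a CW decomposition of $Y$, which sidesteps the issue. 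Either way the argument goes through, and matches the treatment one finds in the reference the paper cites.
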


In the following we  use the bar construction model for the derived tensor product: 
\[
\Apl(X) \otimes^L_{\Apl(Z)} \Apl(Y) \ =\  \bigoplus_{p \geq 0} \Apl(X) \otimes s\overline{\Apl(Z)}^{\otimes p} \otimes \Apl(Y),
\]
with differential analogous to that of the Hochschild complex (see Definition~\ref{definition:hochschildchain}). Note that with this definition $\Apl(X) \otimes^L_{\Apl(Z)} \Apl(Y)$ is a quasi-free (i.e.~free after forgetting the differential) $\Apl(X) \otimes \Apl(Y)$-module. Moreover, there is an $\Apl(X) \otimes \Apl(Y)$-module map
\[
\Apl(X) \otimes \Apl(Y) \rar \Apl(X) \otimes^L_{\Apl(Z)} \Apl(Y)
\]
given by inclusion of the $(p=0)$--summand.

The map
\[
\Apl(X) \otimes^L_{\Apl(Z)} \Apl(Y)  \rar \Apl(W)
\]
in the theorem is then given by projecting onto the $\Apl(X) \otimes \Apl(Y)$ summand on which the map is $f^* \cup g^*$. 
We obtain the following commutative diagram
\begin{equation}\label{equ:pointed}
\begin{tikzcd}
\Apl(X) \otimes^L_{\Apl(Z)} \Apl(Y) \ar[r, "\sim"] & \Apl(W) \\
\Apl(X) \otimes \Apl(Y) \ar[u] \ar[ur] & 
\end{tikzcd}
\end{equation}
of $\Apl(X) \otimes \Apl(Y)$--modules. In other words, both $\Apl(X) \otimes^L_{\Apl(Z)} \Apl(Y)$ and $\Apl(W)$ come with a distinguished element, called the \emph{pointing} and the equivalence respects that distinguished element. That is, we have the following

% or in other words, the top map in the diagram is an equivalence of {\em pointed $\Apl(X) \otimes \Apl(Y)$-modules}. 

%Let us summarize the discussion as follows. The algebra map $\Apl(X) \otimes \Apl(Y) \to \Apl(W)$ turns $\Apl(W)$ into a pointed $\Apl(X) \otimes \Apl(Y)$-module, that is $\Apl(W)$ is an $\Apl(X) \otimes \Apl(Y)$-module and the map $\Apl(X) \otimes \Apl(Y) \to \Apl(W)$ is a map of $\Apl(X) \otimes \Apl(Y)$-modules.

\begin{corollary}
The map 
$\Apl(X) \otimes^L_{\Apl(Z)} \Apl(Y) \overset{\sim}{\rightarrow} \Apl(W)$ of Theorem~\ref{thm:Eilenberg-Moore} 
is a quasi-isomorphism of pointed $\Apl(X) \otimes \Apl(Y)$--modules.
\end{corollary}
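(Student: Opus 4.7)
The plan is to extract the corollary directly from the commutative diagram (\ref{equ:pointed}) already displayed just above the statement, with Theorem~\ref{thm:Eilenberg-Moore} providing the horizontal quasi-isomorphism and the only remaining content being that the two ``pointings'' match under this map.

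First I would recall the two pointings explicitly. On the derived tensor product, the $\Apl(X)\otimes\Apl(Y)$--module structure and the pointing both come from the inclusion of the $p=0$ summand
\[
\Apl(X)\otimes\Apl(Y)\ \hookrightarrow\ \bigoplus_{p\ge 0}\Apl(X)\otimes s\overline{\Apl(Z)}^{\otimes p}\otimes\Apl(Y),
\]
with the pointing being the image of $1\otimes 1$. On $\Apl(W)$, the $\Apl(X)\otimes\Apl(Y)$--module structure is given by multiplication via $f^*\cup g^*$, and the pointing is $(f^*\cup g^*)(1\otimes 1)=1\in\Apl(W)$.

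Next, I would unpack the definition of the Eilenberg–Moore quasi-isomorphism. By construction, this map sends an element of the bar resolution to $0$ on all summands with $p\ge 1$ and sends $a\otimes b$ in the $p=0$ summand to $f^*(a)\cup g^*(b)$ in $\Apl(W)$; this is exactly the description quoted in the paragraph preceding the corollary. Consequently the triangle (\ref{equ:pointed}) commutes on the nose: the composite along the two legs both equal the $\Apl(X)\otimes\Apl(Y)$--module map $a\otimes b\mapsto f^*(a)\cup g^*(b)$, and in particular the two distinguished elements $1\otimes 1\mapsto 1_{p=0}$ and $1\otimes 1\mapsto 1\in\Apl(W)$ agree.

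The only step requiring care, and the one I expect to be the main (if mild) obstacle, is to articulate that a ``quasi-isomorphism of pointed modules'' is exactly the data of a quasi-isomorphism of $\Apl(X)\otimes\Apl(Y)$--modules making the triangle (\ref{equ:pointed}) commute; once this is said, the corollary is immediate from Theorem~\ref{thm:Eilenberg-Moore} combined with the explicit formula for the Eilenberg–Moore map on the $p=0$ summand. No further homotopy-theoretic input is needed, and no rational/real hypothesis beyond what Theorem~\ref{thm:Eilenberg-Moore} already requires.
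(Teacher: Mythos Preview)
Your proposal is correct and follows exactly the paper's approach: the corollary is treated as an immediate consequence of the commuting triangle (\ref{equ:pointed}), with the horizontal map being a quasi-isomorphism by Theorem~\ref{thm:Eilenberg-Moore} and the pointings matching because the Eilenberg--Moore map restricts to $f^*\cup g^*$ on the $p=0$ summand. The paper does not even give a separate proof, regarding the statement as a direct restatement of what the diagram already expresses.
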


\begin{example}[The Hochschild complex as a model for $LM$]\label{ex:LM}
%[Chen-Jones-Goodwillie]
The loop space $LM$ can be defined as a pullback 
\[\xymatrix{
LM \ar[rr]^-{\ev_0}\ar[d] &&  M \ar[d]^{\De_M} \\
PM \ar[rr]^-{\ev_0\x \ev_1} & & M \times M.
}
\]
We then obtain the following zig-zag of pointed $\Apl(M) \otimes \Apl(M)$--modules
\[
\Apl(M) \otimes^L_{\Apl(M) \otimes \Apl(M)} \Apl(M) \overset{\sim}{\longleftarrow} \Apl(PM) \otimes^L_{\Apl(M) \otimes \Apl(M)} \Apl(M) \overset{\sim}{\longrightarrow} \Apl(LM),
\]
where the first arrow is the quasi-isomorphism induced by  $\Apl(PM) \simeq \Apl(M)$ and the second one comes from Theorem \ref{thm:Eilenberg-Moore}. The above zig-zag thus exhibits $\Apl(M) \otimes^L_{\Apl(M) \otimes \Apl(M)} \Apl(M)$ as a model for $C^*(LM;\mathbb Q)$. Additionally, we obtain a model for the map $\ev_0:LM \to M$ as follows: 
\[
\begin{tikzcd}
\Apl(M) \otimes^L_{\Apl(M) \otimes \Apl(M)} \Apl(M) &\ar[l, "\sim"] \Apl(PM) \otimes^L_{\Apl(M) \otimes \Apl(M)} \Apl(M) \ar[r, "\sim"] & \Apl(LM) \\
\Apl(M) \ar[u, "1 \otimes \operatorname{id}"] \ar[r, equal] & \Apl(M) \ar[u, "1 \otimes \operatorname{id}"] \ar[r, equal] & \Apl(M) \ar[u, "\ev_0"]. 
\end{tikzcd}
\]
Finally, let $$B\Apl(M) = \oplus_{p \geq 0} \Apl(M) \otimes \overline{s\Apl(M)}^{\otimes p} \otimes\Apl(M)$$
denote the two-sided bar construction computing $\Apl(M) \otimes^L_{\Apl(M)} \Apl(M)$. There is a quasi-isomorphism of pointed $\Apl(M) \otimes \Apl(M)$--modules
\[
B\Apl(M) \arsim \Apl(M).
\]
Since $B\Apl(M)$ is a quasi-free $\Apl(M) \otimes \Apl(M)$-module we obtain quasi-isomorphisms
\[\begin{tikzcd}
B\Apl(M) \bigotimes_{\Apl(M) \otimes \Apl(M)} \Apl(M)& \ar[l, "\sim"] B\Apl(M) \otimes^L_{\Apl(M) \otimes \Apl(M)} \Apl(M) \ar[d, "\sim"]  \\
\Apl(M) \ar[u, "1 \otimes \operatorname{id}"] \ar[ur, "1 \otimes \operatorname{id}"] \ar[r, "1 \otimes \operatorname{id}"]   &  \Apl(M) \otimes^L_{\Apl(M) \otimes \Apl(M)} \Apl(M). 
\end{tikzcd}
\]
%\[\begin{tikzcd}
%B\Apl(M) \bigotimes_{\Apl(M) \otimes \Apl(M)} \Apl(M)& \ar[l, "\sim"] B\Apl(M) \otimes^L_{\Apl(M) \otimes \Apl(M)} \Apl(M) \ar[r, "\sim"] & \Apl(M) \otimes^L_{\Apl(M) \otimes \Apl(M)} \Apl(M)  \\
%\Apl(M) \ar[u, "1 \otimes \operatorname{id}"] \ar[r, equal]  & \Apl(M) \ar[u, "1 \otimes \operatorname{id}"] \ar[r, equal] & \Apl(M) \ar[u, "1 \otimes \operatorname{id}"]
%\end{tikzcd}
%\]
The left hand side is now exactly the definition of the Hochschild complex: 
\[
C_{*}(\Apl(M), \Apl(M)) \equiv B\Apl(M) \otimes_{\Apl(M) \otimes \Apl(M)} \Apl(M).
\]
This shows that the Hochchild complex, as a pointed $\Apl(M)$-module, is a model for $\ev_0 \colon LM \to M$,
giving a  proof of the isomorphism \eqref{equ:jones} in the rational case. (See also \cite[Proposition 1]{FTrational}.)

Note that the above computations also shows that the map $\Apl(M) \otimes \Apl(M) \to B\Apl(M)$  is a model for the fibration $PM \to M \times M$.
\end{example}

Let $A\simeq \Apl(M)$ be any commutative dg algebra model of $M$. We can now replace $\Apl(M)$ by $A$ in the models for $LM$ and $PM$ that we have obtained. 
%In what follows,  we let $A$ %\Nnote{should we use $A_M$ like in section 3?}\Fnote{I would prefer a shorter symbol, the formulas are already barely readable}\Nnote{agreed}
As above,  let
\[
BA = \bigoplus_{p \geq 0} A \otimes (s\overline{A})^{\otimes p} \otimes A
\]
be the two sided bar-resolution, considered as a pointed $A \otimes A$-module.
%As demonstrated in the example, since this is quasi-isomorphic to $A$ as a pointed $A \otimes A$-module we can use this as a pointed $A \otimes A$-module model for the path fibration.
We then have models 
\[
A\otimes A \rar BA\ \ \ \textrm{and}\ \ \  A \rar  C_{*}(A, A) 
\]
for the fibrations  $\ev_0\x \ev_1: PM \to M$ and $\ev_0 \colon LM \to M$. The latter map admits a section $\operatorname{cst} \colon M \to LM$, which, under the identification $LM\cong PM \times_{M \times M} M$, is given by the diagonal embedding. Analysing the zig-zags in Example~\ref{ex:LM} we find that it is modelled by
\[
C_{*}(A,A) = BA \otimes_{A^{\otimes 2}} A \rar A \otimes_{A^{\otimes 2}} A \overset{m}{\rar} A
\]
where $m \colon A \otimes A \to A$ is the multiplication map of $A$.

%\begin{equation}
%\begin{tikzcd}

%\end{tikzcd}
% \end{equation}

%\begin{remark}\Nnote{too difficult to read. Write more and/or give a ref or drop}
%Using Chen's iterated integrals one can define an explicit map from the bar-complex computing the derived relative tensor product to the standard model of the homotopy fiber product given as $B \times_D PD \times_D C$ (i.e. a point in $B$ and a point in $C$ together with a path in $D$ connecting their images in $D$), where $PD$ is the path space of $D$.
%\end{remark}

%Inspired by the previous example we can now write down a model for the three maps appearing in \eqref{diag:defn of coproduct}.

\subsubsection{Reparametrization map $J$}\label{sec:J}
In this section, we will give a model of the reparametrization map
\[
J\colon sC_{*}(LM) \cong C_{*}(LM \times I, LM \times \partial I) \rar C_{*}(LM, \RR).
\]
%\Nnote{I think it is confusing that $LM$ should suddently be called $\EE$} \Fnote{The reason was not to confuse $LM \to M$ and $LM \to M \times M$. On the algebraic side we never silently identify them, so I thought we also shouldn't on the geometric side. But I see your point!}
We have so far seen that the Hochschild complex $C_*(A,A)$ can be used to model the loop space together with the evaluation and inclusion maps $LM\leftrightarrows M$. This model however 
does not come with a convenient description of the map
$\ev_{0,\frac{1}{2}}=(\ev_0, \ev_\frac{1}{2}):LM\to M\x M$. 
We start the section by giving a model of $LM$ that is more convenient to describe that map.  
%indeed
%we know from Section~\ref{sec:41} that this will be necessary to describe the coproduct.
%and the Hochschild model $C_\bullet(A,A)$ 
% Using theorem \eqref{thm:Eilenberg-Moore} we can find models for $\EE$ as follows. Recall that $\EE = LM$ but thought of as a space over $M \times M$, that is we could take $C_\bullet(A,A)$ as a model for $\EE$ but then the map $\EE \to M \times M$ would be more complicated.

\begin{lemma}\label{lem:LM2}
  The fibration $\ev_{0,\frac{1}{2}}: LM\to M\x M$ admits the following pointed $A \otimes A$-module model:
  \[
    A\otimes A \rar %BA \otimes_{A \otimes A} BA:=
    A^{\otimes 2}\otimes_{A^{\otimes 4}} (BA)^{ \otimes 2} 
\]
where $A^{\otimes 2}$ is an $A^{\otimes 4}$ module via the map $(x,y,z,w) \to (xz,yw)$. %and the map is the map $A\to BA$.
%and we will write elements as $c \otimes \overline{\bf a} \otimes \overline{\bf b} \otimes d = c \otimes (\overline{a_1} \otimes \dotsb \otimes \overline{a_p}) \otimes (\overline{b_1} \otimes \dotsb \otimes \overline{b_q}) \otimes d$.\Nnote{...} 
As a vector space
\[
 % BA \otimes_{A \otimes A} BA
   A^{\otimes 2}\otimes_{A^{\otimes 4}} (BA)^{ \otimes 2} = \bigoplus_{p,q\ge 0}  (s\bar{A})^{\otimes p} \otimes A \otimes (s\bar{A})^{\otimes q} \otimes A
\]
and the map is the inclusion into the summand with $p,q=0$.
%\Nnote{removed the alternative notation and moved an A in the middle in the vect space identification}
  \end{lemma}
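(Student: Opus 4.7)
The plan is to realize $LM$ as a homotopy pullback of two copies of the path space along the diagonal, and then to invoke the pointed Eilenberg--Moore theorem to transport the model to the algebraic side. The key geometric input is the decomposition of a loop $\gamma \in LM$ into two paths that share the \emph{same} ordered pair of endpoints: the first half $\gamma_1(t) = \gamma(t/2)$ goes from $\gamma(0)$ to $\gamma(1/2)$, while the reversed second half $\bar\gamma_2(t) = \gamma(1-t/2)$ likewise goes from $\gamma(1) = \gamma(0)$ to $\gamma(1/2)$. This identifies $LM$ with the homotopy pullback
\[
\begin{tikzcd}
LM \ar[r] \ar[d, "\ev_{0,\frac{1}{2}}"'] & PM \times PM \ar[d] \\
M \times M \ar[r, "\delta"] & M^4,
\end{tikzcd}
\]
where the right vertical map records the endpoints of both paths and $\delta(x,y) = (x,y,x,y)$; the left vertical map is precisely $\ev_{0,\frac{1}{2}}$ because the common endpoints of the pair $(\gamma_1, \bar\gamma_2)$ are $(\gamma(0), \gamma(1/2))$. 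Since the right-hand map is a fibration and $M$ is simply connected, Theorem~\ref{thm:Eilenberg-Moore} applies and produces a quasi-isomorphism of pointed $\Apl(M \times M)$-modules
\[
\Apl(M \times M) \otimes^L_{\Apl(M^4)} \Apl(PM \times PM) \arsim \Apl(LM),
\]
with pointed module structure induced by $\ev_{0,\frac{1}{2}}$.

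To pass to the algebraic model $A$, I would replace $\Apl(M)$ by $A$ and $\Apl(PM)$ by the two-sided bar construction $BA$; as in Example~\ref{ex:LM}, $BA$ is a pointed $A^{\otimes 2}$-module model for $\Apl(PM)$ that is quasi-free over $A^{\otimes 2}$. Consequently $(BA)^{\otimes 2}$ is a quasi-free $A^{\otimes 4}$-module model for $\Apl(PM \times PM)$, so the derived tensor product collapses to the ordinary one, giving
\[
A^{\otimes 2} \otimes_{A^{\otimes 4}} (BA)^{\otimes 2} \;\simeq\; \Apl(LM).
\]
The $A^{\otimes 4}$-module structure on $A^{\otimes 2}$ is determined by the algebra map $\delta^* \colon A^{\otimes 4} \to A^{\otimes 2}$; directly from $\delta(x,y) = (x,y,x,y)$ this sends $a_1 \otimes a_2 \otimes a_3 \otimes a_4 \mapsto (a_1 a_3) \otimes (a_2 a_4)$, which is the map $(x,y,z,w) \mapsto (xz, yw)$ stated in the lemma. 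The vector-space decomposition follows by expanding the two bar constructions summand by summand.

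Finally, the pointing in Eilenberg--Moore is the inclusion into the $p=0$ summand of the bar construction (see \eqref{equ:pointed}); on the algebraic side this is the unit $A^{\otimes 4} \hookrightarrow (BA)^{\otimes 2}$ that models the constant-path section of $PM \times PM \to M^4$, and after tensoring with $A^{\otimes 2}$ over $A^{\otimes 4}$ it gives the inclusion of the $(p=0, q=0)$ summand $A^{\otimes 2} \hookrightarrow A^{\otimes 2} \otimes_{A^{\otimes 4}} (BA)^{\otimes 2}$. The main technical care is in tracking the $A \otimes A$-module structures and the pointings through the zig-zag of quasi-isomorphisms produced by Eilenberg--Moore, and in checking that the replacement $\Apl(PM) \rightsquigarrow BA$ can be done compatibly as pointed modules; however, this bookkeeping is entirely parallel to the treatment of the Hochschild complex model for $\ev_0$ in Example~\ref{ex:LM}.
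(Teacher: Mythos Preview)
Your proposal is correct and follows essentially the same strategy as the paper: realize $LM$ with its evaluation $\ev_{0,\frac{1}{2}}$ as the homotopy pullback of $PM\times PM\to M^4$ along a suitable map $M^2\to M^4$, apply the pointed Eilenberg--Moore theorem, and use the bar construction $BA$ (which is quasi-free over $A^{\otimes 2}$) as a model for $PM$ to replace the derived tensor product by an ordinary one. Your explicit identification $\delta(x,y)=(x,y,x,y)$, obtained by splitting the loop into two halves sharing the same ordered pair of endpoints, is exactly the map that induces the $A^{\otimes 4}$-module structure $(x,y,z,w)\mapsto (xz,yw)$ stated in the lemma, and the paper confirms this choice in the proof of the subsequent Lemma~\ref{lem:Fig8}; your description of the pointing as the inclusion of the $(p,q)=(0,0)$ summand is likewise the paper's.
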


\begin{proof}
The map $\ev_{0,\frac{1}{2}}=(\ev_0, \ev_\frac{1}{2}) \colon LM \to M \times M$ is the product over $M \times M$ of two copies of the path fibration:
$$\xymatrix{LM \ar[rr]\ar[d] && PM \x PM \ar[d] \\
M\x M  \ar[rr]^-{\De_M\x \De_M}  && (M\x M) \x (M\x M).} $$
As in the Example~\ref{ex:LM} we use $A\otimes A \to BA$ as a model for $PM \to M \times M$.
%Recall from Example~\ref{ex:PM} that $PM$ was modelled by $BA$. 
Applying Theorem~\ref{thm:Eilenberg-Moore}, we get a model for $\ev_{0,\frac{1}{2}}\colon LM \to M \times M$ as
\[
\Apl(LM) \xleftarrow{\sim} (A\otimes A)\otimes^L_{A^{\ot 4}} (BA\otimes BA) \simeq   A^{\otimes 2}\otimes_{A^{\otimes 4}} (BA)^{ \otimes 2}   %= BA \otimes_{A \otimes A} BA, 
\]
where one checks that  $a\otimes b\in A\otimes A$ is mapped to the right-hand side as claimed in the statement. 
\end{proof}

\begin{lemma}\label{lem:Fig8}%\Fnote{I hope it is okay to just silently identify the two models, they are isomorphic (not just quasi-isom) and the natural map that identifies the two is also a model for the map that identifies the two ways of saying figure eight homotopically, as the proof suggests.}
The fibration $\Figeight \to M$  admits the following pointed $A$-module model
\[
A \to C_{*}(A,A) \otimes_A C_{*}(A,A) \cong A\otimes_{A^{\otimes 4}} (BA \otimes BA) 
\]
with the cut map and inclusions  $ LM\x LM \xleftarrow{cut}\Figeight \inc LM$ given by the quotient maps
$$C_{*}(A,A) \otimes C_{*}(A,A) \rar  C_{*}(A,A) \otimes_A C_{*}(A,A) \cong A\otimes_{A^{\otimes 4}} (BA \otimes BA)  \lar  BA \otimes_{A^{\otimes 2}} BA.$$
%and the inclusion $\Figeight \to LM$ is given  by quotient map
%\[
%BA \otimes_{A^{\otimes 2}} BA \to (BA \otimes BA) \otimes_{A^{\otimes 4}} A.
%\]
\end{lemma}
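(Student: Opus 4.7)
Following the strategy of the proof of Lemma~\ref{lem:LM2}, the plan is to present $\Figeight$ as a homotopy pullback, apply the Eilenberg--Moore theorem (Theorem~\ref{thm:Eilenberg-Moore}) to obtain the model, and then read off the structural maps from alternative pullback presentations.

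First, cutting a figure-eight loop $\gamma\in\Figeight$ at $t=0$ and $t=\tfrac12$ yields a pair of paths $(\gamma_1,\gamma_2)\in PM\times PM$ whose four endpoints all coincide in $M$. This exhibits $\Figeight$ as the homotopy pullback
\[
\begin{tikzcd}
\Figeight \ar[r] \ar[d] & PM\times PM \ar[d,"(\ev_0,\ev_1,\ev_0,\ev_1)"] \\
M \ar[r,"\Delta"] & M^{\times 4}.
\end{tikzcd}
\]
Since $BA$ is a pointed $A^{\otimes 2}$-module model for $PM\to M\times M$ (Example~\ref{ex:LM}), the exterior tensor product $BA\otimes BA$ is an $A^{\otimes 4}$-module model for $PM\times PM\to M^{\times 4}$. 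Theorem~\ref{thm:Eilenberg-Moore} then yields a pointed $A$-module quasi-isomorphism $A\otimes^L_{A^{\otimes 4}}(BA\otimes BA)\simeq \Apl(\Figeight)$. Because each $BA$ is quasi-free over $A^{\otimes 2}$, the product $BA\otimes BA$ is quasi-free over $A^{\otimes 4}$, so the derived tensor product coincides with the ordinary one, producing the model $A\otimes_{A^{\otimes 4}}(BA\otimes BA)$.

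The isomorphism $A\otimes_{A^{\otimes 4}}(BA\otimes BA)\cong C_{*}(A,A)\otimes_{A}C_{*}(A,A)$ is then a direct algebraic manipulation: one expands $C_{*}(A,A)=BA\otimes_{A^{\otimes 2}}A$ and uses commutativity of $A$ to factor the iterated multiplication $A^{\otimes 4}\to A$ through two successive multiplications $A^{\otimes 2}\to A$. Both sides are readily identified with $\bigoplus_{p,q\ge 0}(s\overline{A})^{\otimes p}\otimes(s\overline{A})^{\otimes q}\otimes A$ as graded vector spaces, and the pointing modeling the inclusion of constants $M\hookrightarrow\Figeight$ is visibly the inclusion of the $(p,q)=(0,0)$ summand.

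It remains to identify the two structural maps. For the cut map $\Figeight\to LM\times LM$, I would use the alternative pullback presentation $\Figeight\simeq(LM\times LM)\times_{M\times M}M$ along $\ev_0\times\ev_0$ and $\Delta$. Feeding this into Theorem~\ref{thm:Eilenberg-Moore} with the $A\otimes A$-module model $C_{*}(A,A)\otimes C_{*}(A,A)$ of $LM\times LM$ identifies the induced cochain map with the quotient $C_{*}(A,A)\otimes C_{*}(A,A)\to C_{*}(A,A)\otimes_{A}C_{*}(A,A)$, as stated. For the inclusion $\Figeight\hookrightarrow LM$, I would instead use the ``two-path'' presentation $LM\simeq PM\times_{M\times M}PM$ (splitting a loop at $0$ and $\tfrac12$ into two paths with endpoints identified in pairs), which by Theorem~\ref{thm:Eilenberg-Moore} provides an alternative model $BA\otimes_{A^{\otimes 2}}BA$ for $\Apl(LM)$. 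The inclusion $\Figeight\hookrightarrow LM$ then imposes the additional identification of all four endpoints, and is therefore modeled by the further quotient $BA\otimes_{A^{\otimes 2}}BA\to A\otimes_{A^{\otimes 4}}(BA\otimes BA)$.

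The main obstacle I anticipate is the consistent bookkeeping of the various $A^{\otimes k}$-module structures on $BA\otimes BA$ and the verification that the several Eilenberg--Moore models of $LM$ and $\Figeight$ used above assemble into a commutative diagram matching the geometric diagram $LM\times LM\xleftarrow{\mathrm{cut}}\Figeight\hookrightarrow LM$; this requires a somewhat delicate diagram chase, though the quasi-freeness of each bar construction makes the identifications between derived and underived tensor products automatic.
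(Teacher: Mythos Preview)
Your proposal is correct and matches the paper's proof essentially line by line: the paper also realizes $\Figeight$ as the pullback of $PM\times PM\to M^4$ along the diagonal $M\to M^4$, and then obtains the cut map and the inclusion $\Figeight\hookrightarrow LM$ by factoring this diagonal through $M^2\to M^4$ in the two possible ways $(x,y)\mapsto(x,x,y,y)$ and $(x,y)\mapsto(x,y,x,y)$, which are exactly your two alternative pullback presentations. The paper's proof is simply more terse about the bookkeeping you flag as the main obstacle.
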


\begin{proof}
  The space $\Figeight$ can be seen to be the pullback of $PM \times PM \to M^2 \times M^2=  M^4 $ along the diagonal $M \to M^4$,
  which gives us a model for $\Figeight \to M$ as $A\otimes_{A^{\otimes 4}} (BA)^{\otimes 2}$.
Consider the two factorizations of the diagonal as $M \to M^2 \to M^4$, where the second map $M^2 \to M^4$ is either $(x,y) \mapsto (x,x,y,y)$ or $(x,y) \mapsto (x,y,x,y)$. The first version exhibits $\Figeight$ as the pullback of $LM \times LM \to M \times M$ along the diagonal and gives the description of the cut map. The second version exhibits $\Figeight$ as the pullback of $\ev_{0,\frac{1}{2}} \colon LM \to M \times M$ giving the description of the inclusion $\Figeight \to LM$.
%
%The first part of the statement and the description of the cut map is obtained by seeing $\Figeight$  as the pull-back of $\ev_{0,\frac{1}{2}}:LM \to M \times M$ along the diagonal, and applying Theorem~\ref{thm:Eilenberg-Moore}. For the second part, we recall that $LM$ is itself a pull-back of $PM \to M \times M$ along the diagonal, and thus 
%
%, while the second is 
%obtained by instead considering $\Figeight$  as the pull-back of $\ev_0\x \ev_0:LM \x LM \to M \times M$ along the diagonal. The theorem also give a model of the cut map, as it is precisely the map $\Figeight \to LM\x LM$ occuring in the latter diagram. 
  \end{proof}

%\Fnote{It is the same model. They are both the pullback of $PM \times PM$ along the small diagonal in $M^4$}
%\begin{lemma}\label{lem:Fig8}
%The figure eight  $\Figeight$ space admits the following models
%\[
%(BA \otimes BA) \otimes_{A \otimes A} A \arsim \Apl(\Figeight) \xleftarrow{\sim} C_\bullet(A,A) \otimes_A C_\bullet(A,A). 
%\]
%Moreover, the cut map $cut:\Figeight \to LM\x LM$ in the right-hand model is given by the quotient map
%$$C_\bullet(A,A) \otimes C_\bullet(A,A) \rar  C_\bullet(A,A) \otimes_A C_\bullet(A,A). $$
%\end{lemma}
%
%\begin{proof}
%The first model  is obtained  by seeing $\Figeight$  as the pull-back of $\ev_{0,\frac{1}{2}}:LM \to M \times M$ along the diagonal, and applying Theorem~\ref{thm:Eilenberg-Moore}, while the second is 
%obtained by instead considering $\Figeight$  as the pull-back of $\ev_0\x \ev_0:LM \x LM \to M \times M$ along the diagonal. The theorem also give a model of the cut map, as it is precisely the map $\Figeight \to LM\x LM$ occuring in the latter diagram. 
%  \end{proof}

  The above description of the figure eight space, allows us now to give a model for the map $\RR \to LM$.
  %Namely, we have two natural maps $C_\bullet(A,A) \to C_\bullet(A,A) \otimes_A C_\bullet(A,A)$ that coincide when restricting to $A \to C_\bullet(A,A)$ and hence a natural map
%\[
%C_\bullet(A,A) \otimes_A C_\bullet(A,A) \to C_\bullet(A,A) \oplus_A C_\bullet(A,A),
%\]
%where
  Let $$C_{*}(A,A) \oplus_A C_{*}(A,A) =\cone\big(C_{*}(A,A) \oplus C_{*}(A,A) \to A)$$ where
  the map is the composition $C_{*}(A,A) \oplus C_{*}(A,A) \to A \oplus A \to A$, with the second map being the difference.
  Here, as we are working with cochain complexes, by ``cone'' we mean the following construction: 
  $$\cone(A\xrightarrow{f} B)=(A\oplus sB,d_A+d_B+f).$$
  
\begin{lemma}\label{lem:R8}
  The map $\RR \to \Figeight$ is modelled by the map 
\[
\begin{tikzcd}[row sep = tiny]
  C_{*}(A,A) \otimes_A C_{*}(A,A)\ar[r] & C_{*}(A,A) \oplus_A C_{*}(A,A) \\
  \overline{\bf a} \otimes \overline{\bf b} \otimes c \ar[r, mapsto] & \epsilon(\overline{\bf a}) (\overline{\bf b} \otimes c) \, \oplus\, \epsilon(\overline{\bf b}) (\overline{\bf a} \otimes c),
\end{tikzcd}
\]
%\[
%\begin{tikzcd}[row sep = tiny]
%    BA \otimes_{A \otimes A} BA \ar[r] & (BA \otimes BA) \otimes_{A \otimes A} A \simeq  C_\bullet(A,A) \otimes_A C_\bullet(A,A)\ar[r] & C_\bullet(A,A) \oplus_A C_\bullet(A,A) \\
%    c \otimes \overline{\bf a} \otimes \overline{\bf b} \otimes \ar[r, mapsto] & \overline{\bf a} \otimes \overline{\bf b} \otimes cd \ar[r, mapsto] & \epsilon(\overline{\bf a}) \overline{\bf b} \otimes cd \oplus \epsil%on(\overline{\bf b}) \overline{\bf a} \otimes cd,
%\end{tikzcd}
%\]
where $\epsilon(\overline{a_1} \otimes \dots \otimes \overline{a_p}) = 0$ if $p \geq 1$ and $1$ if $p = 0$. 
%Henc
% giving us the quasi-isomorphism
%\[
%\Apl(LM,\RR) \simeq \cone\big(BA \otimes_{A \otimes A} BA \to C_\bullet(A,A) \oplus_A C_\bullet(A,A)\big)
%\]
\end{lemma}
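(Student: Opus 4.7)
The plan is to identify $\RR$ as a homotopy pushout $LM \cup^{h}_M LM$ and then read off the asserted map from the universal property of the corresponding homotopy pullback of cochain models.

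First I would decompose $\RR = \RR_1 \cup \RR_2$, where $\RR_k \subset LM$ consists of loops whose $k$-th half is constant. Each $\RR_k$ deformation retracts onto $LM$ (``keep the surviving half''), and $\RR_1 \cap \RR_2 = M$, the subspace of constant loops. Since the inclusions of $M$ into each $\RR_k$ are cofibrations, the pushout $\RR_1 \cup_M \RR_2 = \RR$ is a homotopy pushout, so $\RR \simeq LM \cup^{h}_M LM$ with both gluing maps being the inclusion $\operatorname{cst}\colon M \to LM$ of constant loops.

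By Theorem~\ref{thm:Eilenberg-Moore}, cochain models of a homotopy pushout of spaces are homotopy pullbacks of their cochain models. From Example~\ref{ex:LM}, the map $\operatorname{cst}^*\colon C^*(LM;\R) \to C^*(M;\R)$ is modelled by the augmentation $\epsilon\colon C_*(A,A) \to A$ sending $\overline{\mathbf a} \otimes c$ to $\epsilon(\overline{\mathbf a}) c$. The cone construction $C_*(A,A) \oplus_A C_*(A,A)$ from the statement is exactly a standard explicit model for the homotopy pullback of $\epsilon$ against itself, so it models $C^*(\RR;\R)$.

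It remains to model the map $\RR \hookrightarrow \Figeight$. By the universal property of the (homotopy) pullback, this reduces to modelling each inclusion $i_k\colon \RR_k \hookrightarrow \Figeight$ and checking compatibility at $M$. Under the model of Lemma~\ref{lem:Fig8}, where $C_*(A,A) \otimes_A C_*(A,A) \cong A \otimes_{A^{\otimes 4}}(BA \otimes BA)$ has two $BA$-factors representing the two path-halves of a figure eight, $i_k$ replaces the $k$-th half by a constant path; in the two-sided bar resolution, constant paths are detected by the augmentation $\epsilon\colon BA \to A$ projecting onto the bar-length-zero summand. Thus $(i_1)^*$ is the map $\overline{\mathbf a} \otimes \overline{\mathbf b} \otimes c \mapsto \epsilon(\overline{\mathbf a})(\overline{\mathbf b} \otimes c)$ into $C_*(A,A)$, and $(i_2)^*$ has the roles of $\overline{\mathbf a}$ and $\overline{\mathbf b}$ swapped. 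Both compose with $\epsilon\colon C_*(A,A) \to A$ to give $\epsilon(\overline{\mathbf a})\epsilon(\overline{\mathbf b})c$, so the pair assembles into a chain map into $C_*(A,A) \oplus_A C_*(A,A)$ with zero connecting term, yielding exactly the formula in the lemma.

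The main obstacle will be pinning down the model for each $i_k$, which amounts to justifying that in the bar resolution the inclusion of constant paths $M \hookrightarrow PM$ is dually modelled by $\epsilon\colon BA \to A$; once this is accepted the computation is a mechanical unpacking of the universal property of the pullback, with differential compatibility automatic since any summand with positive bar length in the ``constant'' factor is annihilated by $\epsilon$.
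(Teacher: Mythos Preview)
Your argument is essentially the same as the paper's: both identify $\RR$ as the homotopy pushout $LM\cup^h_M LM$, model the commuting square
\[
\begin{tikzcd}
\Figeight \ar[r]\ar[d] & LM \ar[d]\\ LM \ar[r] & M
\end{tikzcd}
\]
by the square of augmentations on $C_*(A,A)\otimes_A C_*(A,A)$, and read off the map into the cone. The paper obtains this square in one stroke by pulling back the square $LM\times LM \leftarrow LM\times M,\ M\times LM \to M\times M$ along the diagonal $M\to M\times M$ and invoking naturality of Eilenberg--Moore, whereas you assemble the pieces by hand; both routes lead to the same computation.

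One genuine correction: your appeal to Theorem~\ref{thm:Eilenberg-Moore} to conclude that ``cochain models of a homotopy pushout of spaces are homotopy pullbacks of their cochain models'' is a miscitation. That theorem concerns homotopy \emph{pullbacks} of spaces becoming derived tensor products (i.e.\ homotopy pushouts of cdgas). What you actually need here is the dual and more elementary fact---a Mayer--Vietoris type statement---that $\Apl$ takes a homotopy pushout of spaces to a homotopy pullback of cdgas, which is what justifies modelling $\RR$ by the cone $C_*(A,A)\oplus_A C_*(A,A)$. The paper uses this implicitly as well, so the gap is only in the reference, not in the mathematics.
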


\begin{proof}
%Let us first understand the "topological origin" of the space $\RR$ and the map $\RR \to \Figeight$. To that extend, let $M \to LM$ be the inclusion of the constant loops. Taking product with itself we obtain the
Consider the commuting diagram
\[\begin{tikzcd}
LM \times LM & LM \times M \ar[l] \\
M \times LM \ar[u] & M \times M \ar[l]\ar[u]
\end{tikzcd}\]
of spaces over $M \times M$. By pulling back along the diagonal we obtain
\[\begin{tikzcd}
\Figeight & LM \ar[l] \\
LM \ar[u] & M \ar[l]\ar[u]
\end{tikzcd}
\]
% Writing $\RR$ as the homotopy pushout
% \[
% \begin{tikzcd}
% M \ar[r] \ar[d] & LM \ar[d] \\
% LM \ar[r] & \RR
% \end{tikzcd}
% \]
%
% we obtain
% %\begin{align*}
%   $ C_\bullet(A,A) \oplus_A C_\bullet(A,A)\simeq \Apl(\RR)$. % := \cone( C_\bullet(A,A) \oplus C_\bullet(A,A) \to A) \simeq \Apl(\RR)$. 
% %\end{align*}
% We have already given a description of the two maps $LM \to \Figeight$. To define a map $\RR \to \Figeight$ is the same as defining two maps $LM \to \Figeight$ that coincide when restricted to $M \to LM$. But the reason the two maps coincide in our case is because we have a map $LM \times_M LM \to \Figeight$ and $M \to LM$ is a section of $LM \to M$. 
%
% More concretely, from the commuting diagram
% \[\begin{tikzcd}
% LM \times LM & LM \times M \ar[l] \\
% M \times LM \ar[u] & M \times M \ar[l]\ar[u]
% \end{tikzcd}\]
% of spaces over $M \times M$ be obtain by pulling back along the diagonal
% \[\begin{tikzcd}
% \Figeight & LM \ar[l] \\
% LM \ar[u] & M \ar[l]\ar[u]
% \end{tikzcd}
% \]
and $\RR$ is the pushout  of the lower right triangle; the diagram thus encodes the inclusion map $\RR\to \Figeight$. Hence we can get a model for that commuting square in algebra, by pulling-back in the same way the previous square and using the naturally part of Theorem~\ref{thm:Eilenberg-Moore}.  
% $\Figeight$ by pulling back $LM \times LM \to M \times M$ along the diagonal. Moreover, the model we have for $M \to LM$ is a model of pointed $A$-modules and we can thus apply Theorem~\ref{thm:Eilenberg-Moore} (the naturality part) to obtain a model for last commuting square as
This becomes
\[\begin{tikzcd}
C_{*}(A,A) \otimes_A C_{*}(A,A) \ar[r, "\operatorname{id} \otimes \epsilon"] \ar[d, "\epsilon \otimes \operatorname{id}"] & C_{*}(A,A) \ar[d, "\epsilon"] \\
C_{*}(A,A) \ar[r, "\epsilon"] & A, 
\end{tikzcd}
\]
from which one can read off the map given in the statement. 
%We already have models for those two maps and thus obtain the description as stated.
%
%
%Lastly, the composition of maps $\RR \cong LM\x_M M \cup M\x_MLM \to LM \times_M LM \cong \Figeight \to LM$ is modelled by  \Nnote{not sure where the computation comes from, but it really should come from switching between these two models of the figure eight space that I added in, but I don't know if that helps}
\end{proof}

We now assemble the models of $LM$, $\Figeight$ and $\RR$ just obtained to give a model of reparametrization map: 

\begin{proposition}\label{prop:J}
In our models, the reparametrization map $J^*: \Apl(LM,\mathcal{R}) \to s\Apl(LM)$ 
\[
 \cone\big( A^{\otimes 2}\otimes_{A^{\otimes 4}} (BA)^{ \otimes 2}  \rar C_{*}(A,A) \oplus_A C_{*}(A,A)\big) \ \ \xrightarrow{J^*} \ \ sC_{*}(A,A) 
\]
takes $\alpha = (\overline{a_1} \otimes \dotsb \otimes \overline{a_p}) \otimes c \otimes (\overline{b_1} \otimes \dotsb \otimes \overline{b_q}) \otimes  d$
of the subcomplex $ A^{\otimes 2}\otimes_{A^{\otimes 4}} (BA)^{ \otimes 2} $ of the source to
\[
B(\alpha) := \pm (\overline{a_1} \otimes \dotsb \otimes \overline{a_p} \otimes \overline{c} \otimes \overline{b_1} \otimes \dotsb \otimes \overline{b_q}) \otimes d 
\]
in the target and maps $\beta \oplus \gamma \in C_{*}(A,A) \oplus_A C_{*}(A,A)$ to $\beta - \gamma$.
\end{proposition}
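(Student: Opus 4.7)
The plan is to unwind the pullback models established in Lemmas~\ref{lem:LM2}, \ref{lem:Fig8}, and \ref{lem:R8}, and to identify $J^*$ explicitly at the chain level. The guiding geometric principle is that the reparametrization $\theta_{\tfrac{1}{2} \to s}$ converts the intermediate value $\gamma(\tfrac{1}{2})$, which is a boundary point of each of the two path halves of $\gamma$, into an \emph{interior} marked point of a single concatenated path. At the level of bar constructions this corresponds precisely to re-interpreting a boundary $A$--factor as an interior bar element $\overline{c}$, which is the content of the formula $B(\alpha)$.

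First I would make the reparametrization map $J$ explicit on the pullback square of Lemma~\ref{lem:LM2}. Viewing $LM$ via $\ev_{0,\tfrac{1}{2}}$ as $PM \times_{M \times M} PM$ with each factor of $PM$ modeled by $BA$, the map $J$ sends $(\gamma, s)$ to the concatenation of the two stretched path halves, yielding a single loop whose value at time $s$ is $\gamma(\tfrac{1}{2})$. The algebraic analogue is the two-sided bar multiplication $BA \otimes_A BA \to BA$, which internalizes the shared $A$--element $c$ as a bar element $\overline{c}$. Applied to $\alpha = \overline{a_1} \otimes \cdots \otimes \overline{a_p} \otimes c \otimes \overline{b_1} \otimes \cdots \otimes \overline{b_q} \otimes d$ in $A^{\otimes 2} \otimes_{A^{\otimes 4}} BA^{\otimes 2}$, this gives $\pm(\overline{a_1} \otimes \cdots \otimes \overline{c} \otimes \cdots \otimes \overline{b_q}) \otimes d$ in $sC_{*}(A,A)$. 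The overall suspension by $s$ comes from the interval factor $I$ in the source $LM \times I$: the isomorphism $\Apl(LM \times I, LM \times \partial I) \cong s\Apl(LM)$ matches algebraically with the insertion of the new bar element $\overline{c}$.

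For the $\RR$-piece, I would use Lemma~\ref{lem:R8} to identify the two summands of $C_{*}(A,A) \oplus_A C_{*}(A,A)$ with the components of $\RR$ consisting of loops constant on the first half $[0,\tfrac{1}{2}]$ and on the second half $[\tfrac{1}{2},1]$, respectively. A direct analysis of the reparametrizations $\theta_{\tfrac{1}{2} \to 0}$ and $\theta_{\tfrac{1}{2} \to 1}$ shows that $J$ sends $LM \times \{0\}$ into one of these components and $LM \times \{1\}$ into the other. In the cone/relative cochain structure, an element $\beta \oplus \gamma$ records the trivializations of a given $LM$--cochain on these two components, and the opposite boundary orientations of $I = [0,1]$ then produce the signed combination $\beta - \gamma \in sC_{*}(A,A)$, consistently with the cone differential of $\Apl(LM, \RR)$.

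The main obstacle will be the careful chain-level verification of differential and sign compatibility. Concretely, I need to check that the bar concatenation formula intertwines the Hochschild-type differentials on $A^{\otimes 2} \otimes_{A^{\otimes 4}} BA^{\otimes 2}$ and on $sC_{*}(A,A)$, and that the quasi-isomorphisms produced by Theorem~\ref{thm:Eilenberg-Moore} are sufficiently natural with respect to $J$ to transport the topological map to the algebraic formula rather than only to something quasi-isomorphic to it. The sign combinatorics arising from the Koszul rule, the suspension shift, and the cone differential constitute most of the technical work; by contrast, once one accepts the geometric picture of ``stretching'' the loop through its intermediate value, the shape of the formula is essentially forced.
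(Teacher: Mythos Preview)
Your heuristic is sound and lands on the correct formula, but the proposal has a genuine gap at the step you yourself flag as the ``main obstacle'': you hope that the Eilenberg--Moore zig-zags are ``sufficiently natural with respect to $J$'' so that the topological map is carried to your bar-concatenation formula. This is not a matter of sign bookkeeping. The reparametrization $J$ is \emph{not} a map of pullback squares in any evident way, so the naturality of Theorem~\ref{thm:Eilenberg-Moore} does not apply to it directly; without an additional idea you only know your formula up to some unidentified quasi-isomorphism, which is not enough to pin down $J^*$ on the nose.

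The paper resolves this by a different route. Rather than modelling $J$ directly, it factors the map of pairs as a zig-zag
\[
(\text{pt}, LM \sqcup \text{pt}) \xleftarrow{\ \simeq\ } (LM, LM \sqcup LM) \longrightarrow (LM,\RR),
\]
where the right-hand map sends the two copies of $LM$ to the left- and right-half-constant loops. Both legs of this zig-zag \emph{are} maps to which the Eilenberg--Moore models apply functorially (they are built from the inclusions and projections already analysed in Lemmas~\ref{lem:LM2}--\ref{lem:R8}), so their algebraic models are immediate: the right leg is the natural projection of cones, and the left leg is the inclusion $C_*(A,A)\hookrightarrow C_*(A,A)\oplus C_*(A,A)$ into the first summand. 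The only remaining work is to produce an explicit chain-level left inverse to that inclusion, and one checks that $(\alpha,\beta\oplus\gamma)\mapsto B(\alpha)+\beta-\gamma$ does the job. (The paper also notes an alternative proof via Chen's iterated integrals, which gives a direct chain-level comparison; that would be another way to close your gap.)

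In short: your bar-multiplication picture explains \emph{why} the formula should be $B(\alpha)$, but to turn it into a proof you need either the zig-zag factorisation above or an iterated-integral argument; the naturality you invoke is not available for $J$ as stated.
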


One can give a proof of the above proposition using Chen's iterated integrals, see \cite[Section 4.2]{NaeWil19}. We give here an alternative proof. 

\begin{proof}
We split the reparametrization into two maps
\[
(LM \times I, LM \times \partial I) \xrightarrow{\simeq} (LM, LM \sqcup LM) \to (LM, \RR)
\]
where  $LM \sqcup LM \to LM$ maps the two copies of $LM$ to the left (resp.~right) half-constant loops.
% Note that the first map applies a homotopy equivalence to each component of the pair. Moreover,
Now there is an equivalence of pairs (an equivalence of the corresponding cones, to be precise)
$$(LM \times I, LM \times \partial I) \xrightarrow{\simeq} (\text{pt}, LM \sqcup \text{pt})$$
via the map that sends one of the $LM$ factors to $\{ \text{pt} \}$. We can thus think of the reparametrization map as the zig-zag
\[
(\text{pt}, LM \sqcup \text{pt}) \xleftarrow{\simeq} (LM,LM \sqcup LM) \to (LM, \RR).
\]
In our rational model, this becomes a map  
\[
sC_{*}(A,A) \overset{\sim}{\longrightarrow} 
    {\operatorname{cone}\left(
    \begin{tikzcd}
     A^{\otimes 2}\otimes_{A^{\otimes 4}} (BA)^{ \otimes 2}\ar[d] \\ C_{*}(A,A) \oplus C_{*}(A,A)
    \end{tikzcd}
    \right)}
\longleftarrow
    {\operatorname{cone}\left(
    \begin{tikzcd}
     A^{\otimes 2}\otimes_{A^{\otimes 4}} (BA)^{ \otimes 2}\ar[d] \\ C_{*}(A,A) \oplus_A C_{*}(A,A)
    \end{tikzcd}
    \right)}
\]
where the first map is the inclusion $C_{*}(A,A) \to C_{*}(A,A) \oplus C_{*}(A,A)$ in the first summand, and the second map is the natural projection. It remains to give a left-inverse to the first map.
%Namely, we define
%\begin{align*}
%    B \colon BA \otimes_{A \otimes A} BA &\longrightarrow C_\bullet(A,A) \\
%    (\overline{a_1} \otimes \dotsb \otimes \overline{a_p}) \otimes (\overline{b_1} \otimes \dotsb \otimes \overline{b_q}) \otimes c \otimes d &\longmapsto  (\overline{a_1} \otimes \dotsb \otimes \overline{a_p} \otimes c \otimes \overline{b_1} \otimes \dotsb \otimes \overline{b_q}) \otimes d.
%\end{align*}
%This is not a chain map. However, we can define
One can check that 
%\[
%    {\operatorname{cone}\left(
%    \begin{tikzcd}
%    BA \otimes_{A \otimes A} BA \ar[d] \\ C_\bullet(A,A) \oplus C_\bullet(A,A)
%    \end{tikzcd}
%    \right)}
%\longrightarrow C_\bullet(A,A)[1],
%\]
 sending $(\alpha, \beta \oplus \gamma) \to B(\alpha) + \beta - \gamma$  defines such a chain model for such a homotopy inverse. The result follows. 
\end{proof}

%\begin{proposition}
%\label{prop:reparam without reparam}
%The map $(\alpha, \beta \oplus \gamma) \to B(\alpha) + \beta - \gamma$ defines a homotopy inverse to 
%\begin{equation}\label{diag:alg reparam}
%C_\bullet(A,A)[1] \overset{\sim}{\longrightarrow} 
%    {\operatorname{cone}\left(
%    \begin{tikzcd}
%    BA \otimes_{A \otimes A} BA \ar[d] \\ C_\bullet(A,A) \oplus C_\bullet(A,A)
%    \end{tikzcd}
%    \right)}
%\end{equation}
%and gives hence a rational description of the reparametrization map.
%\end{proposition}

% \newcommand{\Tot}[1]{
% \def \Tot #1{
% \operatorname{Tot}\left(
% \begin{tikzcd}
% #1
% \end{tikzcd}
% \right)}

% \begin{equation*}
%     C_\bullet(A,A) \longleftarrow \Tot{BA \otimes_{A \otimes A} BA \\ \ar[u] C_\bullet(A,A) \oplus_A C_\bullet(A,A)} \longrightarrow \Tot{BA \otimes_{A \otimes A} BA & \ar[l] BA \otimes BA \otimes_{A \otimes A} \F \\
%     C_\bullet(A,A) \oplus_A C_\bullet(A,A) \ar[u] & \ar[l]\ar[u] C_\bulilet(A,A) \oplus_A C_\bullet(A,A) \otimes \U
%     }
% \end{equation*}

% \begin{equation}
%     C_\bullet(A,A) \longleftarrow 
%     \operatorname{Tot}\left(
%     \begin{tikzcd}
%     BA \otimes_{A \otimes A} BA \\ \ar[u] C_\bullet(A,A) \oplus_A C_\bullet(A,A)
%     \end{tikzcd}
%     \right)
%     \longrightarrow
%     \operatorname{Tot}\left(
%     \begin{tikzcd}[column sep=small]
%     BA \otimes_{A \otimes A} BA \ar[r] &  BA \otimes BA \otimes_{A \otimes A} \F \\
%     C_\bullet(A,A) \oplus_A C_\bullet(A,A) \ar[r]\ar[u] & \ar[u] C_\bullet(A,A) \oplus_A C_\bullet(A,A) \otimes \U
%     \end{tikzcd}
%     \right)
% \end{equation}

\subsubsection{Cut map}

We give now a model for the cut map used in the definition of the coproduct. Its target is $C^* (LM \times LM,M\x LM\cup LM\x M)\simeq
C^*(LM,M)^{\otimes 2}$. Recall that the relative Hochschild chain complex $\underline{C}_{*}(A,A)$ is the kernel of the (surjective) map $C_{*}(A,A) \to A$ and hence a model for $C^*(LM,M)$.

% , where $C^*(LM,M)$ can be modeled by the
% {\em positive} Hochschild chains, the subcomplex 
% $$\underline{C}_\bullet(A,A):=\bigoplus_{n\ge 1}A^{\otimes n+1}$$
% of the Hochschild complex. \Nnote{not defined earlier?}

\begin{proposition}\label{prop:cut}
The cut map $(\Figeight,R) \to (LM \times LM,M\x LM\cup LM\x M)$ can be modelled as the map 
  \begin{align}\label{diag:alg cut map}
    {\operatorname{cone}\left(
    \begin{tikzcd}[column sep=small, ampersand replacement = \&]
    C_{*}(A,A) \otimes_A C_{*}(A,A) \ar[d] \\
    C_{*}(A,A) \oplus_A C_{*}(A,A)
    \end{tikzcd}
    \right)} \xleftarrow{cut}
     {\operatorname{cone}\left(
    \begin{tikzcd}[column sep=small, ampersand replacement = \&]
    C_{*}(A,A) \ar[d] \\
    A
    \end{tikzcd}
    \right)^{\otimes 2}} \xleftarrow{\sim} \ \underline{C}_{*}(A,A)^{\otimes 2}.
\end{align}
defined by
$$cut\big((\overline{a_1} \otimes \dotsb \otimes \overline{a_p} \otimes a_{p+1})\otimes  (\overline{b_1} \otimes \dotsb \otimes \overline{b_p} \otimes b_{p+1})\big)
=\pm (\overline{a_1} \otimes \dotsb \otimes \overline{a_p}) \otimes (\overline{b_1} \otimes \dotsb \otimes \overline{b_q}) \otimes a_{p+1}b_{q+1}$$
sitting in the subcomplex $  C_{*}(A,A) \otimes_A C_{*}(A,A) $ of the target. 
\end{proposition}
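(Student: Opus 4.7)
The plan is to assemble the relative cut map from the non-relative version already provided by Lemma~\ref{lem:Fig8}, together with a cone/Künneth description of the target pair. The main point is that the formula of the proposition is essentially forced by the chain-map condition together with the augmentation-based model for $\RR \hookrightarrow \Figeight$ of Lemma~\ref{lem:R8}.

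First I would recall from Lemma~\ref{lem:Fig8} that the non-relative cut map $\Figeight \to LM\times LM$, at the cochain level, is modelled by the quotient
\[
C_{*}(A,A)\otimes C_{*}(A,A) \twoheadrightarrow C_{*}(A,A)\otimes_A C_{*}(A,A) \cong A\otimes_{A^{\otimes 4}} (BA)^{\otimes 2}.
\]
Unpacking the $A^{\otimes 4}$-action $(x,y,z,w)\mapsto(xz,yw)$, this quotient is exactly the formula stated in the proposition restricted to the $C_{*}(A,A)^{\otimes 2}$ summand of the source cone, with the multiplication $a_{p+1}b_{q+1}$ appearing as the image of the two basepoint factors of $A$ in $A\otimes A \to A$.

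Next, I would model the target pair. From the paragraph preceding Section~\ref{sec:J}, the section $\operatorname{cst}\colon M \to LM$ is modelled by the multiplication $m\colon C_{*}(A,A) \to A$, so the pair $(LM,M)$ is modelled by $\cone(C_{*}(A,A) \to A)$. The standard Künneth identification $(X,A)\times(Y,B)=(X\times Y,\, A\times Y\cup X\times B)$ then yields a quasi-isomorphism $\cone(C_{*}(A,A)\to A)^{\otimes 2}\simeq C^{*}(LM\times LM,\, M\times LM\cup LM\times M)$. Since $\underline{C}_{*}(A,A)=\ker(C_{*}(A,A)\to A)$ and this map is surjective, the inclusion $\underline{C}_{*}(A,A) \hookrightarrow \cone(C_{*}(A,A)\to A)$ is a quasi-isomorphism, giving the rightmost arrow of the proposition. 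By Lemmas~\ref{lem:Fig8} and \ref{lem:R8} the source cone is the corresponding model for the pair $(\Figeight,\RR)$.

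It then remains to extend the Step~1 quotient to a chain map between cones matching the geometric cut map. The key observation is that when an element $(\bar{\mathbf a},a_{p+1})$ of the source cone has its bar part $\bar{\mathbf a}$ empty, it should record a half-constant loop; under our formula the basepoint factor $a_{p+1}$ multiplies through to land in the $C_{*}(A,A)\oplus_A C_{*}(A,A)$ summand of the target cone via the augmentation $\epsilon$, precisely matching the model of Lemma~\ref{lem:R8} for $\RR \to \Figeight$. Compatibility with the differentials is a direct check using that $\epsilon$ vanishes on bars of positive length and that $m$ is a chain map.

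\paragraph*{Main obstacle.}
The principal subtlety is verifying that the formula matches the geometric cut map on the nose, rather than merely up to an unspecified homotopy. This requires tracing the functoriality of the Eilenberg-Moore zig-zags through the pullback squares of Lemmas~\ref{lem:Fig8} and~\ref{lem:R8}, and in particular checking that the pointings (in the sense of the corollary following Theorem~\ref{thm:Eilenberg-Moore}) are preserved along the entire comparison. Koszul signs arising from the cone shifts account for the unspecified $\pm$ in the formula, but pinning them down explicitly (as would be needed for later sign-sensitive applications) is the most tedious part of the verification.
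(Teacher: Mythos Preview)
Your proposal is correct and follows essentially the same route as the paper: invoke Lemma~\ref{lem:Fig8} for the non-relative cut map, then use the diagrams from Lemma~\ref{lem:R8} to pass to the relative version. The paper's own proof is extremely terse (it literally just says ``use the same diagrams of spaces as in the proof of Lemma~\ref{lem:R8}''), whereas you spell out the K\"unneth model for the target pair and the rightmost quasi-isomorphism $\underline{C}_{*}(A,A)^{\otimes 2}\hookrightarrow \cone(C_{*}(A,A)\to A)^{\otimes 2}$ explicitly; these details are implicit in the paper but not written down.
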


\begin{proof}
We have already seen in Lemma~\ref{lem:Fig8} that the cut map $\Figeight \to LM \times LM$ can be described as the quotient map
%\[
%(LM \times LM) \times_{M \times M} M \to (LM \times LM) \times_{M \times M} (M \times M),
%\]
%that is, it is described by naturality of the pullback along the diagonal map $M \to M \times M$. Using Theorem \ref{thm:Eilenberg-Moore} we obtain a cochain description of that map as
\begin{align*}
    C_{*}(A,A) \otimes_A C_{*}(A,A) \longleftarrow & C_{*}(A,A) \otimes C_{*}(A,A). 
%    \alpha \otimes_A \beta \mapsfrom &\alpha \otimes \beta.
\end{align*}
To see that this map descends to a relative map, we use  the same diagrams of spaces as in the proof of Lemma~\ref{lem:R8}. 
%of that map recall that from the diagram
%\[\begin{tikzcd}
%LM \times LM & LM \times M \ar[l] \\
%M \times LM \ar[u] & M \times M \ar[l]\ar[u]
%\end{tikzcd}\]
%of spaces over $M \times M$ we obtain by pulling back along the diagonal the diagram
%\[\begin{tikzcd}
%\Figeight & LM \ar[l] \\
%LM \ar[u] & M \ar[l]\ar[u].
%\end{tikzcd}
%\]
%As aready used in the proof of \ref{???} we have models for those squares together with the natural map from the second square into the first one.
%Thus we also have a map of the total cofiber of the second one into the total cofiber of the first one. But the total cofiber of the second one is exactly the cone of $\RR \to \Figeight$ whereas for the first one we obtain two copies of the cone $M \to LM$. Writing out the map in our models gives the stated formula.
%
%%Instead of showing that the model we have of the cut map can made relative, we can use that the map $M \to LM$ is split, and thus also the map $\Apl(LM) \to \Apl(LM, M)$ is split. Thus the relative cut map can equivalently be defined as the compositiong
%%\[
%%\Apl(LM, M) \otimes \Apl(LM, M) \to \Apl(LM \times LM) \overset{\text{cut}}{\longrightarrow} \Apl(\Figeight)
%
%\Nnote{to be finished} Furthermore applying naturality in the first argument, we apply it to the square
%\[
%\begin{tikzcd}
%LM \times LM \ar[r] \ar[d] & LM \times M \ar[d] \\
%M \times LM \ar[r] & M \times M
%\end{tikzcd}
%\]
%to obtain a relative version
\end{proof}

\subsubsection{Model for the relative intersection}
We are left to find a model for the relative intersection step of \eqref{diag:defn of coproduct}. We will use the decomposition of this map given by 
the relative intersection product using the oriented intersection context \eqref{diag:FM_2}:
\begin{equation}\label{diag:def of FM_2 rel int}
\begin{tikzcd}
C_*(LM) \ar[r]& C_*(LM, LM|_{FM_2}) & \ar[l, "\sim"] C_*(LM|_{M}, LM|_{UTM}) \ar[r, "\cap p_\EE^*\tau"] & C_{* -n}(LM|_{M}) \\
C_*(\RR) \ar[r, equal] \ar[u,"f"] \ar[u]& C_*(\RR) \ar[r,equal] \ar[u,"f"]& C_*(\RR) \ar[r, "\cap f^*p_\EE^*\tau"] \ar[u,"f"]& C_{* -n}(\RR) \ar[u,"f"].
\end{tikzcd}
\end{equation}
%But the objects and maps (except for capping with a certain class) are obtained by taking fiber products and naturality of taking fiber products with the intersection context.
%Recall first the diagram of spaces (Diagram~\eqref{diag:FM_2})
%\begin{equation*}
%\begin{tikzcd}
 % UTM \ar[r] \ar[d] & FM_2 \ar[d] \\
%  M \ar[r] & M \times M.
%\end{tikzcd}
%\end{equation*}
The middle map is the ``excision'' map
$$C_*(\Figeight, LM|_{UTM})\cong C_*(LM|_M,LM|_{UTM})\arsim C_*(LM,LM|_{FM_2}),$$
and we need a cochain model for a homotopy inverse of that map. We start by giving a model of the spaces involved, starting from appropriate models of $UTM$ and $FM_2$. 
%which we can do using Theorem~\ref{thm:Eilenberg-Moore}, once we have a model of the our chosen intersection context these spaces are defined from, namely the diagram: 
%There is exactly one arrow in \eqref{diag:alg intersection} that goes the "wrong way" but is an equivalence.
%We will here choose  nice enough models for $UTM$ and $FM_2$, that will allow us to write down an explicit and simple formula for the needed homotopy inverse.

\medskip

%\Nnote{we have a standing assumption that $M$ is simply connected, otherwise we cannot apply EM.}
Suppose now that $A=A_M$ is a Poincare duality model for $M$, as given by Theorem~\ref{lastthm}. Then $A$ has a coproduct map $\Delta \colon s^{-n}A \to A \otimes A$ (dual to the intersection product of $M$, see Example~\ref{ex:PDint}).
% The cone of this map gives a square-zero extension (after forgetting the differential) of $A \otimes A$ by $A[n]$ which
Lambrechts-Stanley conjectured in \cite{LamSta08b} explicit commutative dg algebra models for configuration spaces. This conjecture was shown to hold over the reals by Idrissi and Campos-Willwacher, see \cite{Idrissi}, \cite[Appendix A]{Campos-Willwacher}. 

For  $FM_2$, this model is the quotient of the truncated polynomial algebra
\[
\F_A = \left(\frac{ A \otimes A [\omega_{1,2}] }{( \omega_{1,2}^2 = 0 , (a \otimes 1)\omega_{1,2} = (1 \otimes a) \omega_{1,2})} \ , \  d\omega_{1,2} = \Delta(1) \right),
\]
where $\omega_{1,2}$ is a degree $n-1$ class. 
The spherical fibration $UTM$ more classically admits a model 
\[
\U_A = \left( \frac{A[\vartheta]}{(\vartheta^2 = 0)} \ , \ d\vartheta = e \right),
\]
where $\vartheta$ has degree $n-1$, representing the fiber, and  $e = (m \circ \Delta)(1) \in A$ is the Euler class of $M$. 

These algebras fit into the commutative diagram
\begin{equation}\label{diag:Poincare duality FM_2}
\begin{tikzcd}
\U_A & \ar[l] \F_A \\
A \ar[u] & \ar[l,"m"] \ar[u] A \otimes A,
\end{tikzcd}
\end{equation}
where the vertical maps are the natural inclusions and the top map takes $\vartheta$ to $\omega_{1,2}$.

\begin{theorem}\label{thm:LSpotpourri}
Let $A$ be a Poincare duality model for a simply-connected manifold $M$. 
  Then the following hold:
\begin{enumerate}
\item The diagram \eqref{diag:Poincare duality FM_2} is a real model for \eqref{diag:FM_2}, i.e. there exists a zig-zag of quasi-isomorphisms of squares of commutative dg $\mathbb{R}$-algebras connecting \eqref{diag:Poincare duality FM_2} to the diagram obtained from \eqref{diag:FM_2} by applying $\Apl(-)$. 
\item The map $\phi:\cone(A\to \U_A)\to \cone(A\otimes A\to \F_A)$ taking $(x, y + \vartheta z) \in A \oplus s\U_A$ to $(\Delta(z), (z\otimes 1)\omega_{1,2} ) \in A \otimes A \oplus s\F_A$, is a model for the homotopy inverse of the map of pairs $\iota: (UTM,M)\arsim (FM_2,M\x M)$, and is a map of $A \otimes A$-modules. 

\item A representative of the Thom class $\tau \in \cone(A \to \U_A)$ is given by
\[
\tau=(e, \vartheta),
\]
where $e = m \circ \Delta(1) \in A$ is the Euler class as above.
\end{enumerate}
\end{theorem}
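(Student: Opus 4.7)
The plan is to treat the three parts in order. Parts (2) and (3) reduce to direct algebraic verifications, while part (1) assembles several deep results.

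For part (1), the bottom map $m\colon A\ot A\to A$ is a standard commutative dg model of the diagonal $\De\colon M\to M\x M$. The right vertical map $A\ot A\to\F_A$ is the real model of the blowup $FM_2\to M\x M$, which is the main content of the Idrissi / Campos--Willwacher theorem confirming the Lambrechts--Stanley conjecture for $FM_2$ over $\R$ for simply connected $M$. The left vertical map $A\to\U_A$ models the sphere bundle projection $UTM\to M$: since $TM$ is classified by $e\in H^n(M)$, the Gysin sequence shows that $\U_A=(A[\vartheta]/(\vartheta^2),\;d\vartheta=e)$ has the correct cohomology, and this is a multiplicative cdga model of the sphere bundle of $TM$ by standard arguments. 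The top map $\F_A\to\U_A$ sending $\omega_{1,2}\mapsto\vartheta$ models the boundary inclusion $UTM\inc FM_2$: in the angular-form picture underlying the Idrissi / Campos--Willwacher construction, $\omega_{1,2}$ is a global angular form on $FM_2$ whose restriction to the boundary sphere bundle $UTM$ represents the fiber class $\vartheta$. Patching the four maps into the required pointwise zig-zag of quasi-isomorphisms of commutative dg squares is then formal.

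For part (2), I would verify separately the chain-map, $A\ot A$-module, and homotopy-inverse properties. The module property follows from two Frobenius-type identities: in $A\ot A$ one has $(a\ot 1)\Delta(z)=\Delta(az)=(1\ot a)\Delta(z)$, and in $\F_A$ the defining relation $(a\ot 1)\omega_{1,2}=(1\ot a)\omega_{1,2}$ holds; these combine to give $\phi((a\ot b)\cdot-)=(a\ot b)\cdot\phi(-)$, with the source action factoring through $m$. The chain-map property is a direct calculation using $d\vartheta=e$, $d\omega_{1,2}=\Delta(1)$, and the cancellation $(z\ot 1)\Delta(1)=\Delta(z)$, once more by the Frobenius identity. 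Finally, that $\phi$ is a model of the homotopy inverse of $\iota$ I would verify by a five-lemma argument on the long exact sequences of the two cones: the induced maps fit into a compatible map of long exact sequences whose outer terms are the multiplication $A\ot A\to A$ and the inclusion $\U_A\to\F_A$, both already identified in part (1), and the wanted isomorphism on $H^*(\cone(-))$ then follows from the fact that $\iota$ itself is a homotopy equivalence of pairs (which in turn follows from excision in $FM_2$ around the boundary).

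For part (3), the cocycle condition $d\tau=0$ is immediate: with the cone differential $d(x,sy)=(dx,\,-s(dy)+s\,f(x))$ and $f\colon A\to\U_A$ the inclusion, one computes
\[
d(e,s\vartheta)=(de,\,-s(d\vartheta)+s\,e)=(0,\,-se+se)=0,
\]
using $de=0$ and $d\vartheta=e$. To identify $(e,\vartheta)$ with the Thom class of the diagonal, I would apply $\phi$ from part (2) to obtain $\phi(\tau)=(\Delta(1),s\omega_{1,2})\in\cone(A\ot A\to\F_A)$ and check that this cocycle represents the Thom class of $\De\subset M\x M$ under the Idrissi / Campos--Willwacher model: $\omega_{1,2}$ is by construction the global angular form whose class restricts to the Euler class on the zero section, and $\Delta(1)$ is the algebraic Poincar\'e-dual of the diagonal coming from the Frobenius structure (compare Example~\ref{ex:PDint}). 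The main obstacle I anticipate is the careful tracking of sign and orientation conventions --- both in the cone differential and in the normalization $\tau_M\cap[M\x M]=[M]$ of the geometric Thom class --- to ensure that $(e,\vartheta)$ represents exactly $\tau_M$ rather than a nontrivial scalar multiple.
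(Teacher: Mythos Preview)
Your treatment of parts (1) and (3) is essentially the paper's own argument. For (3) the paper adds one clarifying observation: since $\cone(A\to\U_A)\simeq s^{n-1}A$, there is a unique candidate for the Thom class up to a scalar, and the scalar is then fixed exactly as you propose, by checking that the image in $H^n(M\times M)$ under the map of part~(2) is the diagonal class $\Delta(1)$.

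In part (2), however, your five-lemma argument does not go through as written. A map of cones $\cone(f)\to\cone(g)$ induces a map of the associated long exact sequences only if it respects the short exact sequence $0\to sB\to\cone(f)\to A\to 0$; but $\phi$ does not. The element $(0,\,y+\vartheta z)\in A\oplus s\U_A$ is sent to $(\Delta(z),\,(z\otimes 1)\omega_{1,2})$, whose $A\otimes A$-component $\Delta(z)$ is generally nonzero, so $\phi$ does not restrict to a map $s\U_A\to s\F_A$. The ``outer terms'' $m\colon A\otimes A\to A$ and $\F_A\to\U_A$ that you invoke go in the wrong direction: they are the components of $\widehat m$ (the model for $\iota$), not of $\phi$. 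Even if you apply the five-lemma to $\widehat m$ instead and conclude that $\widehat m$ is a quasi-isomorphism, you still owe a verification that $\phi$ is \emph{its} inverse rather than some other quasi-isomorphism, and for that one must compute a composite. The paper does exactly this, and it is shorter than what you propose: one computes $\widehat m\circ\phi\,(x,\,y+\vartheta z)=(ze,\,z\vartheta)$ and exhibits the explicit chain homotopy $h(x,\,y+\vartheta z)=(y,0)$ to the identity. This single line replaces both your chain-map check and your five-lemma step.
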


\begin{proof}[Proof sketch] 
  Part (1) follows from the works \cite{Campos-Willwacher} and \cite{Idrissi}: the model of $FM_2$ given here is that of Lambrechts-Stanley, and it is a commutative dg algebra model of $FM_2$ over the reals by these two papers. Analysing the models, we see that the maps in Diagram \eqref{diag:FM_2} are modeled as stated, as the multiplication of $A$ models the diagonal, and the class $\omega_{1,2}$ corresponds to the class of the sphere in $UTM$. Going through the proof in \cite{Campos-Willwacher} or \cite{Idrissi} that $\F_A$ is quasi-isomorphic to $\Apl(FM_2,\R)$, one can strengthen the statements to obtain a zig-zag of squares of commutative dg algebras, as claimed.  See also \cite[Proposition 8.3]{NaeWil19}. 
%In loc. cit. a model for the compactified configuration space of $n$-points is obtained that is compatible with "colliding of points" (more precisely, as operadic right modules over
%a version of the little cubes operad). We only need the configuration of $2$ points, that is $FM_2$ and the configuration space of $1.5$-points, which is $UTM$ together with the maps in the diagram
%\ref{diag:FM_2}.
%Except for the space $FM_2$ this is not part of the particular flavour of little cubes operad that is considered in loc. cit., however, the proof works our case verbatim (see for instance \cite[Proposition 8.3]{NaeWil19} which gives the missing piece to apply the method of \cite[Appendix A]{Campos-Willwacher}).

  For part (2), we know that the map of pairs is a homotopy equivalence, and that it is modeled by the map $\widehat m$ coming from the diagram. So it is enough to check that $\phi$ a 1-sided homotopy inverse to $\widehat m$. The composite $\widehat m\circ \phi$ takes $(x, y + \vartheta z)$ to $ (z e, z\vartheta)$ in  $\cone(A \to \U_A)$. This map is homotopic to the identity by the homotopy $h(x, y + \vartheta z) \mapsto (y,0)$. One checks that $\phi$ is a map of $A\otimes A$--modules.
  %\Nnote{though I don't see how that last claim should hold... may I'm also unsure what the $A\otimes A$--module structure is to begin with}
%\Fnote{The $A \otimes A$-module structure comes because everything in the square is an $A \otimes A$-algebra and taking cones still gives $A \otimes A$-modules. The calculation is then $\phi( (\alpha \otimes \beta) (x, y + \vartheta z) ) = \phi( \alpha \beta x, \alpha \beta y \pm \vartheta \alpha \beta z) = (\Delta(\alpha \beta z) , \pm(\alpha \beta z) \omega_{1,2}) = (\alpha \otimes \beta \Delta(z), \pm(\alpha \otimes \beta) \omega_{1,2}) = \alpha \otimes \beta \phi(z)$. Here we used $\Delta(xyz) = (x \otimes y)\Delta(z)$.}

Part (3) follows from the analysis of the models in (1). Alternativly, note that $\cone(A \to \U_A) \sim s^{n-1}A$ and thus there is only one candidate up to a scalar for the Thom class. The scalar is determined by the condition that the image of the Thom class under the isomorphism $H^n(M, UTM) \cong H^n(M \times M, M \times M \setminus M) \to H^n(M \times M)$ is the diagonal class. By (2) this image is $\Delta(1) \in A \times A$, which is the diagonal class. %(corresponding to the chosen orientation).
%In such a model it is easy to write down an inverse to $\cone(A \otimes A \to \F) \to \cone(A \to \U)$. Namely, we can send
%\begin{equation}\label{eqn:alg retraction}
%(x,y,z) \mapsto (\Delta(y), y, \Delta(z)),
%\end{equation}
%the key point being that this is a map of $A \otimes A$-modules.
\end{proof}

\subsubsection{Proof of Theorem  \ref{thm:alggeo}}

We now assemble the results of the previous sections to give a sketch proof of Theorem~\ref{thm:alggeo}.
% ,  explicitly tracing through the three steps in our algebraic models for \eqref{diag:defn of coproduct}.
%Let $\alpha, \beta \in \cone(C_\bullet(A,A) \to A)$ be represented as element in $\ker(C_\bullet(A,A) \to A)$ and write
Let
$$\overline{a_1} \otimes \dotsb \otimes \overline{a_p} \otimes a_{p+1},\ \overline{b_1} \otimes \dotsb \otimes \overline{b_q} \otimes b_{q+1}\ \in\ \underline{C}_*(A,A)$$
be two Hochschild chains. 
By Proposition~\ref{prop:cut}, applying the cut map to their tensor product we get 
\[
\pm (\overline{a_1} \otimes \dotsb \otimes \overline{a_p}) \otimes (\overline{b_1} \otimes \dotsb \otimes \overline{b_q}) \otimes a_{p+1}b_{q+1} \in C_{*}(A,A) \otimes_A C_{*}(A,A).
\]
%Note that this is an element in the relevant cone as it evaluates to 0 under $C_\bullet(A,A) \otimes_A C_\bullet(A,A) \to C_\bullet(A,A) \oplus_A C_\bullet(A,A)$. \Nnote{Are you working with cones or with kernels??}
Next we apply the relative intersection product as given by our algebraic model of Diagram~\eqref{diag:def of FM_2 rel int}:  %defined by the diagram \eqref{diag:alg intersection}.
writing $\mathcal{L}_A:= A^{\otimes 2}\otimes_{A^{\otimes 4}} (BA)^{ \otimes 2}$ and $\RR_A:=C_{*}(A,A) \oplus_A C_{*}(A,A)$ for our models of $LM$ and $\RR$ of Section~\ref{sec:J}, and applying Theorem~\ref{thm:Eilenberg-Moore} to the pullbacks of $LM$ along the maps of Diagram~\eqref{diag:FM_2}, as modeled by \eqref{diag:Poincare duality FM_2}, the intersection product is modeled by 
% \[
% \begin{tikzcd}[scale = 0.8, column sep = tiny]
% BA \otimes_{A \otimes A} BA \ar[d] & \ar[l] BA \otimes BA \otimes_{A \otimes A} \cone(A\otimes A \to \F) \ar[r, "\sim"]  \ar[d]& BA \otimes BA \otimes_{A\otimes A} \cone(A \to \U)  \ar[d]& \ar[l, "\cup Th"] (BA \otimes BA) \otimes_{A \otimes A} A[n] \ar[d] \\
% C_\bullet(A,A) \oplus_A C_\bullet(A,A) \ar[r, equal] & C_\bullet(A,A) \oplus_A C_\bullet(A,A) \ar[r,equal] & C_\bullet(A,A) \oplus_A C_\bullet(A,A) &\ar[l, "\cup Th"] C_\bullet(A,A) \oplus_A C_\bullet(A,A)[n],
% \end{tikzcd}
% \]
\begin{equation*}%\label{diag:alg intersection}
\begin{tikzcd}[column sep = small]
\mathcal{L}_A\ar[d] & \ar[l]  \mathcal{L}_A \otimes_{A^{\otimes 2}}\cone(A^{\otimes 2} \to \F_A)\ar[r, "\sim"]  \ar[d]&  \mathcal{L}_A \otimes_{A^{\otimes 2}} \cone(A \to \U_A)\ar[d]& \ar[l, " \cup\tau_M"'] \mathcal{L}_A \otimes_{A^{\otimes 2}}A \ar[d] \\
\RR_A \ar[r, equal] & \RR_A \ar[r,equal] & \RR_A &\ar[l, "\cup \tau_M"'] \RR_A,
\end{tikzcd}
\end{equation*}
where the first map has degree $n$, with source
$$\mathcal{L}_A \otimes_{A^{\otimes 2}} A  =  (A^{\otimes 2}\otimes_{A^{\otimes 4}} (BA)^{ \otimes 2})\otimes_{A^{\otimes 2}} A \cong C_{*}(A,A) \otimes_A C_{*}(A,A).$$
Recall from Theorem~\ref{thm:LSpotpourri}(3) that the Thom class is given by $\tau_M=(e,\vartheta)$ in our model, so applying the first map to our element gives  
\[
  \pm (\overline{a_1} \otimes \dotsb \otimes \overline{a_p}) \otimes (\overline{b_1} \otimes \dotsb \otimes \overline{b_q}) \otimes (a_{p+1}b_{q+1}e, a_{p+1}b_{q+1} \vartheta) \]
in $C_{*}(A,A) \otimes_A C_{*}(A,A) \otimes_A \cone(A \to \U_A)\cong \mathcal{L}_A \otimes_{A^{\otimes 2}} \cone(A \to \U_A)$.
Now we apply the explicit inverse of $\cone(A \otimes A \to \F_A) \to \cone(A \to \U_A)$ given in Theorem \ref{thm:LSpotpourri} which yields
\[
  \pm (\overline{a_1} \otimes \dotsb \otimes \overline{a_p}) \otimes (\overline{b_1} \otimes \dotsb \otimes \overline{b_q}) \otimes (\Delta( a_{p+1} b_{q+1}), (a_{p+1}b_{q+1}\otimes 1) \omega_{1,2}) \]
in $\mathcal{L}_A \otimes_{A^{\otimes 2}} \cone(A \otimes A \to \F_A)$.
Next applying $\cone(A^{\otimes 2} \to \F_A) \to A^{\otimes 2}$, we obtain
\[
  \pm (\overline{a_1} \otimes \dotsb \otimes \overline{a_p}) \otimes (\overline{b_1} \otimes \dotsb \otimes \overline{b_q}) \otimes \Delta( a_{p+1} b_{q+1}) \]
in  $\mathcal{L}_A \otimes_{A^{\otimes 2}} A^{\otimes 2}\cong A^{\otimes 2}\otimes_{A^{\otimes 4}} (BA)^{ \otimes 2}$.
% which we will have to show that it lies (up to coboundaries) in the image of
% \[
% \operatorname{Tot}\left( \begin{tikzcd}
% 0 \ar[d] \\
% 0 \oplus_A C_{*}(A,A)
% \end{tikzcd}\right) 
% \longrightarrow
% \operatorname{Tot}\left( \begin{tikzcd}
% BA \otimes BA \ar[d] \\
% C_\bullet(A,A) \oplus_A C_\bullet(A,A)
% \end{tikzcd}\right)
% \]
Finally, the reparametrization map $J$ is given by Proposition \ref{prop:J} after applying the last identification and yields the formula for the coproduct as
\[
\sum \pm (\overline{a_1} \otimes \dotsb \otimes \overline{a_p} \otimes \overline{a_{p+1}}e_i \otimes \overline{b_1} \otimes \dotsb \otimes \overline{b_q}) \otimes b_{q+1}f_i \in sC_{*}(A,A)
\]
matching the formula for the algebraic Goresky-Hingston product of Definition \ref{defn:algebraic coproduct} (up to switching the factors, which does not make a difference on cohomology by the graded commutativity of the product, see Theorem~\ref{thm1}).

\bibliographystyle{plain}
\bibliography{biblio}

\end{document}